\newtheorem{Theorem}{Theorem}[section]
\newtheorem{Lemma}{Lemma}[section]
\theoremstyle{definition}
\newtheorem{Definition}{Definition}[section]
\theoremstyle{remark}
\newtheorem{Remark}{Remark}[section]
\numberwithin{equation}{section}
\newcommand{\eps}{\varepsilon}
\newcommand{\Cin}{C^\infty_c(\R^+\times\O)}
\newcommand{\sro}{{\sqrt{\rho}}}
\renewcommand{\v}{{ v}}
\newcommand{\vfi}{{\varphi}}
\newcommand{\Tn}{{\mathbb T}_\mu}
\newcommand{\Sk}{{\mathbb S}_r}
\newcommand{\Sn}{{\mathbb S}_\mu}
\renewcommand{\u}{{ u}}
\newcommand{\R}{{\mathbb R}}
\newcommand{\Dv}{{\rm div}}
\newcommand{\m}{{ m}}
\def\w{w}
\newcommand{\T}{{\mathcal T}}
\def\f{\frac}
\renewcommand{\O}{\Omega}
\def\D{\Delta }
\def\hf1{^\f{1}{1-\xi^2}}
\def\be{\begin{equation}}
\def\en{\end{equation}}
\def\bs{\begin{split}}
\def\es{\end{split}}
\renewcommand{\v}{{ v}}
\author{Didier Bresch}
\address{LAMA UMR5127 CNRS, Universit\'e Savoie Mont-Blanc, France}
\email{didier.bresch@univ-smb.fr}
\author{Alexis F. Vasseur}
\address{Department of Mathematics,
The University of Texas at Austin.}
\email{vasseur@math.utexas.edu}
\author{Cheng Yu}
\address{Department of Mathematics,
University of Florida.}
\email{chengyu@ufl.edu}
\title[Existence of the Compressible Navier-Stokes Equations]
{Global Existence of Entropy-Weak Solutions to the 
 Compressible Navier-Stokes Equations with Non-Linear Density Dependent Viscosities}
\subjclass[2010]{35Q35, 76N10}
\keywords{Global weak solutions, compressible Navier-Stokes equations, vacuum, degenerate viscosity.}
\date{\today}
\begin{document}

\begin{abstract}
In this paper, we extend considerably the global existence results of entropy-weak solutions
related to compressible Navier-Stokes system  with density dependent viscosities obtained,
independently (using different strategies), by Vasseur-Yu [{\it Inventiones mathematicae} 
(2016) and arXiv:1501.06803 (2015)] and by Li-Xin [arXiv:1504.06826 (2015)]. 
 More precisely we are able to consider a physical symmetric viscous stress tensor $\sigma = 2 \mu(\rho) \,{\mathbb D}(\u) + \bigl(\lambda(\rho) {\rm div} \u - P(\rho)\bigr) \, {\rm Id}$ 
where ${\mathbb D}(\u) = [\nabla \u + \nabla^T \u]/2$ with a shear and  bulk viscosities (respectively $\mu(\rho)$ and $\lambda(\rho)$) satisfying the BD relation $\lambda(\rho)=2(\mu'(\rho)\rho - \mu(\rho))$ and  a pressure law $P(\rho)=a\rho^\gamma$ (with $a>0$ a given constant)  for any adiabatic constant $\gamma>1$.  The nonlinear shear viscosity $\mu(\rho)$ satisfies some lower and upper bounds for low and high densities (our mathematical result includes the case $\mu(\rho)= \mu\rho^\alpha$ with $2/3 < \alpha < 4$ and $\mu>0$ constant).
    This provides an answer to a longstanding mathematical question on compressible Navier-Stokes
equations with density dependent viscosities as  mentioned for instance by F. Rousset in the
Bourbaki 69ème année, 2016--2017, no 1135.
\end{abstract}

\maketitle

\section{Introduction}  When a fluid is governed by the barotropic compressible Navier-Stokes equations, the existence of global weak solutions, in the sense of J. {\sc Leray} (see \cite{Le}), in space dimension greater than two remained for a long time  without answer, because of the weak control of the divergence of the velocity field which may provide the possibility for the density to vanish (vacuum state) even if initially this is not the case. 

There exists a huge literature on this question, in the case of constant  shear viscosity $\mu$ and constant bulk viscosity $\lambda$.  Before 1993, many authors such as  Hoff \cite{Hoff87},  Jiang-Zhang \cite{JZ},  Kazhikhov--Shelukhin \cite{KS}, Serre \cite{S}, Veigant--Kazhikhov \cite{VK}  (to cite just some of them) have obtained partial answers: We can cite, for instance, the  works in dimension 1 in 1986 by  Serre \cite{S}, the one  by Hoff \cite{Hoff87} in 1987, and the one in the spherical case in 2001 by Jiang-Zhang \cite{JZ}. The first rigorous approach of this problem in its generality is due in 1993 by P.--L. Lions \cite{Lions} when the pressure law in terms of the density is given by  $P(\rho)=a \rho^\gamma$ where $a$ and $\gamma$ are two  strictly positive constants. He has presented in 1998 a complete theory for  $P(\rho)=a \rho^\gamma$ with $\gamma\ge 3d/(d+2)$ (where $d$ is the space dimension) allowing to obtain the result of global existence of weak solutions \`a la Leray  in dimension $d=2$ and $3$ and for general initial data belonging to the energy space.  His result has been then extended in 2001 to the case $P(\rho)= a \rho^\gamma$ with $\gamma>d/2$ by Feireisl-Novotny-Petzeltova \cite{FNP} introducing an appropriated method of truncation. Note also in 2014 the paper by  Plotnikov-Weigant \cite{PW} in dimension 2 for the linear pressure law that means $\gamma =1$. In 2002,  Feireisl \cite{F04} has also proved it is possible to consider a  pressure $P(\rho)$ law non-monotone on a compact set $[0,\rho_*]$ (with $\rho_*$ constant) and monotone elsewhere. This has been relaxed  in 2018 by  Bresch-Jabin \cite{BJ} allowing to consider real non-monotone  pressure laws. They have also proved that it is possible to consider some constant anisotropic viscosities.  The Lions theory has also been extended recently by Vasseur-Wen-Yu \cite{VWY} to pressure laws depending on two phases (see also Mastese $\&$ {\it al.} \cite{MaMiMuNoPoZa},  Novotny \cite{No} and Novotny-Pokorny \cite{NoPo}). The method introduced by Bresch-Jabin in \cite{BJ}  has also been recently developped in the bifluid framework by Bresch-Mucha-Zatorska in \cite{BrMuZa}.

When the shear and the bulk viscosities (respectively $\mu$ and $\lambda$) are assumed to depend on the density $\rho$, the mathematical framework is completely different. It has been discussed, mathematically, initially in a paper by  Bernardi-Pironneau \cite{BP} related to viscous shallow-water equations and by P.--L. Lions \cite{Lions} in his second volume
related to mathematics and fluid mechanics.  The main ingredient in the constant case which is the compactness in space of the effective flux  $F= (2\mu+\lambda) {\rm div} u - P(\rho)$ is no longer true for density dependent viscosities.   In space dimension greater than one, a real breakthrough has been realized with a series of papers by Bresch-Desjardins \cite{BD,BD2006,BrDeFormula, BrDeSpringer}, (started in 2003 with Lin \cite{BDL} in the  context of Navier-Stokes-Korteweg with linear shear viscosity case) who have identified an information related to the gradient of a function of the density if the viscosities satisfy what is called the Bresch-Desjardins constraint.  This information is usually called the BD entropy in the literature with the introduction of the concept of entropy-weak solutions. Using such extra information, they obtained the global existence of entropy-weak solutions in the presence of appropriate drag terms or singular pressure close to vacuum. Concerning the one-dimensional in space case or the spherical case, many important results have been obtained  for instance by  Burtea-Haspot \cite{BuHa},  Ducomet-Necasova-Vasseur \cite{DNV},  Constantin-Drivas-Nguyen-Pasqualottos \cite{CoDrNgPa}, Guo-Jiu-Xin \cite{GJX},  Haspot \cite{Haspot}, Jiang-Xin-Zhang \cite{JXZ}, Jiang-Zhang \cite{JZ},  Kanel \cite{Kan},  Li-Li-Xin \cite{LiLiXi},  Mellet-Vasseur~\cite{MV2},  Shelukhin \cite{S}  without such kind of additional terms. Stability and construction of approximate solutions in space dimension two or three have been investigated during more than fifteen years with a first important stability result without drag terms or singular pressure by  Mellet-Vasseur \cite{MV}.  Several important works for instance by Bresch-Desjardins \cite{BD,BD2006,BrDeFormula, BrDeSpringer} and Bresch-Desjardins-Lin \cite{BDL}, Bresch-Desjardins-Zatorska \cite{BDZ}, Li-Xin \cite{LiXi}, Mellet-Vasseur \cite{MV}, Mucha-Pokorny-Zatorska
\cite{MuPoZa},  Vasseur-Yu \cite{VY-1,VY},  and  Zatorska \cite{Z} have also been written trying to find a way to construct approximate solutions.  Recently a real breakthrough has been done in  two important papers by Li-Xin \cite{LiXi} and Vasseur-Yu \cite{VY}:  Using two different ways, they got the global existence of entropy-weak solutions for the compressible paper when  $\mu(\rho)=\rho$ 
and $\lambda(\rho)=0$. Note that in the last paper \cite{LiXi} by Li-Xin, they also consider more general viscosities satisfying the BD relation but with a non-symmetric stress diffusion ($\sigma = 
\mu(\rho)\nabla u + (\lambda(\rho){\rm div} u - P(\rho)) {\rm Id}$) and more restrictive conditions on the shear $\mu(\rho)$ viscosity and bulk viscosity  $\lambda(\rho)$ and on the pressure law $P(\rho)$ compared to the present paper.

    The objective of this current paper is to extend the  existence results of global entropy-weak solutions obtained independently (using different strategies) by Vasseur-Yu \cite{VY} and Lin-Xin \cite{LiXi} to answer a longstanding mathematical question on compressible Navier-Stokes equations with density dependent viscosities as  mentioned for instance by Rousset \cite{Ro}.  More precisely extending and coupling carefully the two-velocities framework by Bresch-Desjardins-Zatorska \cite{BDZ} with the generalization of the quantum B\"ohm identity found by Bresch-Couderc-Noble-Vila \cite{BCNV} (proving a generalization of the dissipation inequality used  by J\"ungel \cite{J}  for Navier-Stokes-Quantum system and established by J\"ungel-Matthes in \cite{JuMa}) and with the renormalized solutions introduced in Lacroix-Violet and Vasseur \cite{LaVa}, we can get global existence of entropy-weak solutions to the following Navier-Stokes equations:
\begin{equation}
\label{NS equation}
\begin{split}
&\rho_t+\Dv(\rho\u)=0\\
&(\rho\u)_t+\Dv(\rho\u\otimes\u)+\nabla P(\rho) - 2 {\rm div}\bigl(\sqrt{\mu(\rho)} \mathbb{S}_\mu
     + \frac{\lambda(\rho)}{2\mu(\rho)} {\rm Tr}(\sqrt{\mu(\rho)} \mathbb S_\mu) {\rm Id} \bigr)=0,
\end{split}
\end{equation}
where
$$\sqrt{\mu(\rho)} \mathbb{S}_\mu = 
     \mu(\rho) {\mathbb D}(\u) $$
with  data
\begin{equation}
\label{initial data}
\rho|_{t=0}=\rho_0(x)\ge 0,\;\;\;\;\;\rho\u|_{t=0}=\m_0(x)=\rho_0\u_0,
\end{equation}
and where $P(\rho) =a \rho^{\gamma}$ denotes the pressure with the two constants $a>0$ 
and $\gamma >1$, $\rho$ is the density of fluid, $\u$ stands for the velocity of fluid, $\mathbb{D}\u=[\nabla\u+\nabla^T\u]/2$ is the strain tensor. 
As usually, we consider
$$\u_0= \frac{m_0}{\rho_0} \hbox{ when } \rho_0\not=0 \hbox{ and }\u_0 = 0 \hbox{ elsewhere},
\qquad \frac{|m_0|^2}{\rho_0} = 0  \hbox{  a.e. on }  \{x\in \Omega: \rho_0(x) = 0\}. $$
 We remark the following identity \begin{equation*}
 2 {\rm div}\bigl(\sqrt{\mu(\rho)} \mathbb{S}_\mu
     + \frac{\lambda(\rho)}{2\mu(\rho)} {\rm Tr}(\sqrt{\mu(\rho)} \mathbb S_\mu) {\rm Id} \bigr)=-2\Dv(\mu(\rho)\mathbb{D}\u)-\nabla(\lambda(\rho)\Dv\u).
\end{equation*}

 The viscosity coefficients $\mu=\mu(\rho)$ and $\lambda=\lambda(\rho)$ satisfy the Bresch-Desjardins relation introduced in \cite{BrDeFormula}
\begin{equation}
\label{BD relationship}
\lambda(\rho)=2(\rho\mu'(\rho)-\mu(\rho)).
\end{equation}
The relation between the stress tensor $\mathbb{S}_\mu$ and the triple 
$(\mu(\rho)/\sqrt\rho, \sqrt \rho \u, \sqrt\rho \v)$ where $\v= 2 \nabla s(\rho)$ with $s'(\rho)= \mu'(\rho)/\rho$  will be proved in the following way: The matrix
$\mathbb{S}_\mu$ is the symetric part of a matrix value function $\mathbb{T}_\mu$ namely
\begin{equation}\label{Smu}
\mathbb{S}_\mu = \frac{(\mathbb{T}_\mu + \mathbb{T}_\mu^t)}{2}
\end{equation}
where $\mathbb{T}_\mu$ is defined through
\begin{equation} \label{Tmu}
\begin{split}
\sqrt{\mu(\rho)} \mathbb{T}_\mu 
= \nabla (\sqrt\rho \u\,  \frac{\mu(\rho)}{\sqrt\rho}) 
    - \sqrt\rho \u \otimes  \sqrt\rho \nabla s(\rho) 
\end{split}
\end{equation}
with 
\begin{equation}\label{s} 
s'(\rho) = \mu'(\rho) /\rho,
\end{equation}
and
\begin{equation} \label{Tmu1}
\begin{split}
 \frac{\lambda(\rho)}{2\mu(\rho)} {\rm Tr}(\sqrt{\mu(\rho)} \mathbb T_\mu) {\rm Id} 
= \Bigl[ {\rm div}(\frac{\lambda(\rho)}{\mu(\rho)}  \sqrt\rho \u\,  \frac{\mu(\rho)}{\sqrt\rho}) 
     - \sqrt\rho \u \cdot  \sqrt\rho \,\nabla s(\rho) \, \frac{\rho \mu''(\rho)}{\mu'(\rho)}\Bigr] {\rm Id}.
\end{split}
\end{equation}
  For the sake of simplicity, we will consider the case of periodic boundary conditions in three dimension in space namely $\O=\mathbb{T}^3$. In the whole paper, we assume:
 \begin{equation} \label{regmu}
  \mu  \in C^0({\mathbb R_+}; \, {\mathbb R_+})\cap C^2({\mathbb R}_+^*; \,{\mathbb R}),
\end{equation} 
 where $\mathbb R_+=[0,\infty) \text{ and } \mathbb R_+^*=(0,\infty).$
 We also assume that  there exists two positive numbers $\alpha_1,\alpha_2$ such that 
  \begin{equation}
  \label{mu estimate}
  \begin{array}{l}
 \displaystyle{
   \frac{2}{3}<\alpha_1<\alpha_2<4,
   }\\[0.3cm]
 \displaystyle{\mathrm{for \ any } \ \rho>0, \qquad 
0<\frac{1}{\alpha_2}\rho \mu'(\rho)\leq \mu(\rho)\leq  \frac{1}{\alpha_1}\rho \mu'(\rho),
}
\end{array}
  \end{equation}
and there exists a constant $C>0$ such that 
\begin{equation}  \label{mu estimate1}
\left|\frac{\rho \mu''(\rho)}{\mu'(\rho)}\right| \le C < +\infty.
\end{equation}
  Note that if $\mu(\rho)$ and $\lambda(\rho)$ satisfying \eqref{BD relationship} and \eqref{mu estimate}, then
  $$\lambda(\rho) + 2\mu(\rho)/3 \ge 0$$
and thanks to \eqref{mu estimate} 
$$ \mu(0)=  \lambda(0) = 0.$$ 
Note that the hypothesis \eqref{mu estimate}--\eqref{mu estimate1} allow a shear viscosity of the form $\mu(\rho)=\mu \rho^{\alpha}$ with $\mu>0$ a constant where $2/3<\alpha<4$ and a bulk viscosity satisfying the BD relation: $\lambda(\rho)= 2(\mu'(\rho)\rho - \mu(\rho))$. 
 
 \medskip
 
\noindent {\bf Remark.}  
  In \cite{VY} and \cite{LiXi} the case $\mu(\rho)=\mu\rho$ and $\lambda(\rho)=0$ is considered, and in \cite{LiXi} more general cases have been considered but  with a non-symmetric viscous term in the three-dimensional in space case, namely $- \Dv (\mu(\rho)\nabla \u) - \nabla (\lambda(\rho)\Dv \u)$. In \cite{LiXi} the viscosities $\mu(\rho)$ and $\lambda(\rho)$ satisfy \eqref{BD relationship} with
  $\mu(\rho) = \mu \rho^\alpha$ where  $\alpha \in [3/4,2)$ and with the following assumption on the value $\gamma$ for the pressure $p(\rho)=a\rho^\gamma$:  
 $$\hbox{ If } \alpha\in [3/4,1], \qquad \gamma \in (1,6\alpha-3)$$
 and 
 $$\hbox{ if } \alpha \in (1,2), \qquad \gamma\in [2\alpha-1,3\alpha-1].$$ 

\bigskip

\noindent The main result of our paper reads as follows:
 \begin{Theorem}
\label{main result}
Let $\mu(\rho)$ verify \eqref{regmu}--\eqref{mu estimate1} and   $\mu$ and $\lambda$ verify \eqref{BD relationship}.  Let us assume the initial data satisfy
\begin{equation}
\label{initial energy}
\begin{split}
& \int_{\O}\left(\frac{1}{2}\rho_0|\u_0+ 2\kappa \nabla s(\rho_0)|^2 
   +\kappa(1-\kappa)\rho_0\frac{|2\nabla s(\rho_0)|^2}{2}\right) \, dx \\
& \hskip6cm 
+ \int_{\O}\left(a\frac{\rho_0^{\gamma}}{\gamma-1} + \mu(\rho_0)\right)\,dx\leq C <+\infty.
\end{split}
\end{equation}
with $k\in (0,1)$ given. Let $T$ be given such that $0<T<+\infty$, then, for any $\gamma>1$, there exist a renormalized solution to \eqref{NS equation}-\eqref{initial data} as defined in Definition \ref{def_renormalise_u}. Moreover, 
this renormalized solution 
with initial data satisfying \eqref{initial energy} is a weak solution to \eqref{NS equation}-\eqref{initial data} in the sense of Definition~\ref{defweak}.

\end{Theorem}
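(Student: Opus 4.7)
The strategy is the standard three-stage one: build an approximation that admits regular solutions, derive uniform a priori bounds, and pass to the limit via compactness. The two-velocity parameter $\kappa\in(0,1)$ appearing in \eqref{initial energy} is the Bresch-Desjardins-Zatorska device: one tests the momentum equation of \eqref{NS equation} against $\u+2\kappa\na s(\rho)$ instead of $\u$ in order to recover an augmented energy identity whose left-hand side is precisely \eqref{initial energy} evaluated at time $t$. Taking $\kappa\in(0,1)$ rather than $\kappa=1$ leaves a residual term $\kappa(1-\kappa)\rho|2\na s(\rho)|^2/2$ which, combined with the generalized B\"ohm identity of Bresch-Couderc-Noble-Vila \cite{BCNV}, yields a coercive dissipation controlling the full symmetric tensor $\sqrt{\mu(\rho)}\mathbb S_\mu$ defined by \eqref{Smu}--\eqref{Tmu1}. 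This extra control is what enables a symmetric stress tensor for the full range $\alpha_1\in(2/3,4)$ of \eqref{mu estimate}.

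I would regularize \eqref{NS equation} by simultaneously adding a Korteweg term $\eps\rho\na\D s(\rho)$ giving extra smoothness on $\rho$, a cold pressure near vacuum, classical and turbulent drag terms $r_0\u+r_1\rho|\u|^2\u$, and a parabolic regularization of the continuity equation; existence of regular solutions for such a system under BD-type structure is by now classical. The augmented energy identity together with the B\"ohm-identity computation then deliver, uniformly in the regularization parameters: $\sqrt{\rho}\u\in L^\infty_t L^2_x$, $\sqrt{\rho}\na s(\rho)\in L^\infty_t L^2_x$, $\rho\in L^\infty_t L^\gamma_x$, and $\sqrt{\mu(\rho)}\mathbb S_\mu \in L^2_{t,x}$. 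These are precisely the inputs needed to run the compactness step.

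Strong compactness of $\rho$ in $C_t L^p_x$ follows from Aubin-Lions applied to the BD bound on $\na s(\rho)$. The main obstruction is passing to the limit in $\rho\u\otimes\u$, which requires strong convergence of $\sqrt{\rho}\u$ in $L^2_{t,x}$; this is handled by a Mellet-Vasseur style estimate of the form $\rho|\u|^{2+\nu}\in L^\infty_t L^1_x$ for some $\nu>0$, obtained by testing the momentum equation against a function of $|\u|^2$, a test that the combined energy plus B\"ohm dissipation renders admissible precisely in the range $2/3<\alpha_1<\alpha_2<4$. Rather than trying to identify $\mu(\rho)\na\u$ directly, one identifies $\sqrt{\mu(\rho)}\mathbb T_\mu$ through its representation \eqref{Tmu} as a distributional combination of $\na(\sqrt{\rho}\u\cdot \mu(\rho)/\sqrt{\rho})$ and $\sqrt{\rho}\u\otimes\sqrt{\rho}\na s(\rho)$, both of which pass to the limit under the convergences already obtained; the trace part is treated analogously via \eqref{Tmu1}, with \eqref{mu estimate1} ensuring that the coefficient $\rho\mu''(\rho)/\mu'(\rho)$ remains under control.

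The continuity equation is closed not directly but inside the Lacroix-Violet--Vasseur framework \cite{LaVa}: one passes to the limit in the renormalized mass balance $\del_t\varphi(\rho)+\Dv(\varphi(\rho)\u)+(\rho\varphi'(\rho)-\varphi(\rho))\Dv\u=0$ for admissible $\varphi$, which yields the renormalized solution of Definition \ref{def_renormalise_u}. Lower semicontinuity of the augmented energy together with \eqref{initial energy} gives a finite energy inequality for all $t\in[0,T]$, which promotes the renormalized solution to a weak one in the sense of Definition \ref{defweak}. The hard part is the simultaneous closure of the two-velocity augmented energy and the generalized B\"ohm dissipation on the approximate level: one must verify that the extra $\kappa(1-\kappa)$ piece genuinely absorbs the borderline terms appearing near the endpoints of \eqref{mu estimate}, and that this coercivity survives the successive limits $\eps,r_0,r_1\to 0$. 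This interplay is what distinguishes the argument from \cite{VY,LiXi} and what unlocks the symmetric stress tensor with the full range of admissible $\alpha$.
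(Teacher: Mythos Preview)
Your outline has two genuine gaps that would prevent the argument from closing.

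First, the renormalization in Definition~\ref{def_renormalise_u} is in the \emph{velocity}, not in the density. One tests the momentum equation against $\psi\,\vfi'(\u)$ for $\vfi\in W^{2,\infty}(\R^d)$ with $\vfi(s)s\in L^\infty$, not the mass equation against $\vfi(\rho)$. Your sentence ``one passes to the limit in the renormalized mass balance $\del_t\vfi(\rho)+\Dv(\vfi(\rho)\u)+(\rho\vfi'(\rho)-\vfi(\rho))\Dv\u=0$'' is the classical DiPerna--Lions notion and is not what is meant here. The velocity renormalization is what makes the convective term $\rho\u\otimes\u$ stable at the limit: one passes to the limit in $\rho\vfi(\u)\otimes\u$ with $\vfi$ bounded, and only afterwards recovers the weak formulation by choosing a sequence $\vfi_n(y)\to y_1$ with $\|\vfi_n''\|_{L^\infty}\to 0$, which kills the measure remainders $R_{\vfi_n},\overline R^1_{\vfi_n},\overline R^2_{\vfi_n}$.

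Second, and more seriously, you invoke a Mellet--Vasseur estimate $\rho|\u|^{2+\nu}\in L^\infty_t L^1_x$ to get strong $L^2$ compactness of $\sqrt\rho\u$. The paper explicitly avoids this: no such estimate is available uniformly in the parameters for general $\mu(\rho)$ in the range \eqref{mu estimate}, and the entire point of the Lacroix-Violet--Vasseur velocity renormalization is to bypass it. This is why the full range $\gamma>1$ is reached. Relatedly, your approximation is missing the specific cubic drag $r_2\frac{\rho}{\mu'(\rho)}|\u|^2\u$ (absorbed by the capillary dissipation via Lemma~\ref{Lemma on jungel type inequality}) and the extra pressure $\delta\rho^{10}$ for \emph{large} densities (not a cold pressure near vacuum); these are what make the approximate $\kappa$-entropy estimate close. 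The passage to the limit then proceeds in the renormalized formulation, sending $r_2\to 0$, then $r_1\to 0$, then $r_0,\delta,r\to 0$, and finally $\eps_1\to 0$ in the regularized viscosity $\mu_{\eps_1}$ with $\inf\mu_{\eps_1}'\ge\eps_1>0$.
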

Our result may be considered as an improvement of \cite{LiXi} for two reasons: First it takes into account a physical symmetric viscous tensor and secondly, it extends the range of coefficients $\alpha$ and $\gamma$. 
The method is based on the consideration of an approximated system with an extra pressure quantity, appropriate non-linear drag terms and appropriate capillarity terms. This generalizes the Quantum-Navier-Stokes system with quadratic drag terms considered in \cite{VY-1,VY}. 
First we prove that weak solutions of the approximate solution are renormalized solutions of the system, in the sense of \cite{LaVa}. Then we pass to the limit with respect to $r_2,r_1, r_0, r, \delta$ to get renormalized solutions of the compressible Navier-Stokes system.   The final step concerns the proof that a renormalized solution of the compressible Navier-Stokes system is a global weak solution of the compressible Navier--Stokes system.
Note that, thanks to the technique of renormalized solution introduced in \cite{LaVa}, it is not necessary to derive the Mellet-Vasseur type inequality in this paper: This allows us to  cover the all range $\gamma>1$. 

\medskip

\noindent {\it First Step.}
   Motivated by the work of \cite{LaVa}, the first step is to establish the existence of global $\kappa$ entropy weak solution to the following approximation
   \begin{equation}
\label{last level approximation}
\begin{split}
&\rho_t+\Dv(\rho\u)=0\\
&(\rho\u)_t+\Dv(\rho\u\otimes\u)+\nabla P(\rho) + \nabla P_\delta(\rho)   \\ 
 &\hskip3cm- 2 {\rm div}\Bigl(\sqrt{\mu(\rho)} \mathbb{S}_\mu 
    + \frac{\lambda(\rho)}{2\mu(\rho)} {\rm Tr}(\sqrt{\mu(\rho)} \mathbb S_\mu) {\rm Id}\Bigr) \\
& \hskip3cm-  2 r {\rm div}\Bigl(\sqrt{\mu(\rho)}  \mathbb{S}_r
    + \frac{\lambda(\rho)}{2\mu(\rho)} {\rm Tr}(\sqrt{\mu(\rho)} \mathbb S_r) {\rm Id}\Bigr) \\
& \hskip7cm + r_0\u+r_1\frac{\rho}{\mu'(\rho)}|\u|^2\u+r_2\rho|\u|\u = 0
\end{split}
\end{equation}
where the barotorpic pressure law and the extra pressure term are respectively
 \begin{equation}
 P(\rho)= a\rho^\gamma, \qquad   P_\delta (\rho)= \delta \rho^{10} \hbox{ with } \delta>0.
 \end{equation}
 The matrix $\mathbb{S}_\mu$ is defined in \eqref{Smu} and $\mathbb{T}_\mu$ is given in\eqref{Tmu}- \eqref{Tmu1}.
The matrix  $\mathbb{S}_r$ is compatible in the following sense:
\begin{equation}\label{eq_quantic}
\begin{split}
r\sqrt{\mu(\rho)}  \mathbb{S}_r   = 2r \Bigl[2 \sqrt{\mu(\rho)} \nabla\nabla Z(\rho) 
    - \nabla (\sqrt{\mu(\rho)} \nabla Z(\rho))\Bigr],
\end{split}
\end{equation}
where 
\begin{equation} \label{ZZ}
\displaystyle Z(\rho) = \int_0^\rho [(\mu(s))^{1/2} \mu'(s)]/s \, ds, \qquad 
\displaystyle k(\rho) = \int_0^\rho [{\lambda(s)\mu'(s)}]/{\mu(s)^{3/2}} ds
\end{equation}
and
\begin{equation}\label{eq_quantic11}
\begin{split}
 r\frac{\lambda(\rho)}{2\mu(\rho)} {\rm Tr}(\sqrt{\mu(\rho)} \mathbb S_r) {\rm Id} 
=   r(\frac{\lambda(\rho)}{\sqrt{\mu(\rho)}}+ \frac{1}{2} k(\rho))\Delta Z(\rho) {\rm Id}
      - \frac{r}{2}{\rm div} [ k(\rho)\nabla Z(\rho)] {\rm Id}.
\end{split}
\end{equation}
 

\noindent {\bf Remark.} Note that the previous system is the generalization of the quantum viscous Navier-Stokes system considered by Lacroix-Violet and Vasseur in \cite{LaVa} (see also the interesting papers by Antonelli-Spirito \cite{AnSp1, AnSp2} and by Carles-Carrapatoso-Hillairet \cite{CaCaHi}). Indeed if we consider  $\mu(\rho)=\rho$ and $\lambda(\rho)=0$, we can write $\sqrt{\mu(\rho)} \mathbb S_r$ as
$$\sqrt{\mu(\rho)} \mathbb{S}_r =4 \sqrt{\rho} \Bigl[ \nabla\nabla \sqrt\rho
      -  4 (\nabla \rho^{1/4} \otimes \nabla \rho^{1/4}) \Bigr],
$$
using  $Z(\rho) = 2\sqrt\rho.$ The Navier--Stokes equations for quantum fluids was also considered by A. J\" ungel in \cite{J}.

\bigskip

\noindent As the first step generalizing \cite{VY},  we  prove the following result.
\begin{Theorem}
\label{main result 1} Let $\mu(\rho)$ verifies \eqref{regmu}--\eqref{mu estimate1} and $\lambda(\rho)$ is given by \eqref{BD relationship}. If $r_0>0$, then we assume also that ${\rm inf}_{s \in [0,+\infty)} \mu'(s)=\epsilon_1 >0$. 
   Assume that  $r_1$ is small enough compared to $r$,  $r_2$  is small enough compared to $\delta$, and that the initial values verify 
   \begin{equation}\label{Initial conditions}
   \begin{split}
  & \int_\Omega \rho_0\left(\frac{|\u_0+2\kappa\nabla s(\rho_0)|^2}{2}+(\kappa (1-\kappa)+r)\frac{|2\nabla s(\rho_0)|^2}{2}\right) \, dx\\
   & \hskip4cm + \int_\Omega \bigl(a \frac{\rho_0^\gamma}{\gamma-1}+
    \mu(\rho_0) + \delta \frac{\rho_0^{10}}{9}+\frac{r_0}{\varepsilon_1}|(\ln \rho_0)_-|\bigr)\,dx < + \infty,
   \end{split}
   \end{equation} 
for a fixed $\kappa\in (0,1)$.  Then there exists a $\kappa$ entropy weak solution $(\rho,\u, \mathbb T_\mu, \mathbb S_r)$ to
   \eqref{last level approximation}--\eqref{eq_quantic11} satisfying the initial conditions \eqref{initial data}, in the sense that
$(\rho,\u, \mathbb T_\mu, \mathbb S_r)$ satisfies the mass and momentum equations in a weak form,  and satisfies the compatibility formula  in the sense of definition \ref{defweak}. In addition, it verifies  the following estimates:
\begin{equation}
\label{priori estimates} 
\begin{split}
&\|\sqrt{\rho}\, (\u+2\kappa \nabla s(\rho))\|^2_{L^{\infty}(0,T;L^2(\O))}\leq C,
\quad\quad\quad\quad\quad
a \|\rho\|^\gamma_{L^{\infty}(0,T;L^{\gamma}(\O))}\leq C,
\\&\|\mathbb T_\mu\|^2_{L^2(0,T;L^2(\O))}\leq C,
\quad\quad\quad
(\kappa(1-\kappa)+r)\|\sqrt\rho \nabla s(\rho)\|^2_{L^{\infty}(0,T;L^2(\O))}\leq C,
\\&
\kappa\|\sqrt{\mu'(\rho)\rho^{\gamma-2}}\nabla\rho\|^2_{L^2(0,T;L^2(\O))}\leq C, 
\end{split}
\end{equation}
and
\begin{equation}\label{priorie estimate2}
\begin{split}
\\&\delta\|\rho\|^{10}_{L^{\infty}(0,T;L^{10}(\O))}\leq C,\quad\quad\;\;\;\quad\quad\quad\quad\delta\|\sqrt{\mu'(\rho)\rho^{8}}\nabla\rho\|^2_{L^2(0,T;L^2(\O))}\leq C,
\\&r_2\|(\frac{\rho}{\mu'(\rho_n)})^{\frac{1}{4}}\u\|^4_{L^4(0,T;L^4(\O))}\leq C,
\quad\quad\quad r_1\|\rho^{\frac{1}{3}}|\u|\|^3_{L^3(0,T;L^3(\O))}\leq C,
\\&r_0\|\u\|^2_{L^2(0,T;L^2(\O))}\leq C,
\quad\quad\quad\quad\quad\quad\quad
    r \|\mathbb S_r\|^2_{L^2(0,T;L^2(\O))} \leq C.
\end{split}
\end{equation}
Note that  the bounds \eqref{priori estimates} provide the following control on the velocity field
$$ \|\sqrt{\rho}\, \u\|^2_{L^{\infty}(0,T;L^2(\O))}\leq C.$$
Moreover let $$\displaystyle Z (\rho)= \int_0^\rho \frac{\sqrt{\mu(s)}\mu'(s)}{s}\, ds\;\;\text{and }\; \displaystyle  Z_1(\rho) =  \int_0^\rho \frac{\mu'(s)}{(\mu(s))^{1/4} s^{1/2}} \, ds,$$
we have the extra control
\begin{equation}
\label{J inequality for sequence}
r \left[\int_0^T\int_{\O}|\nabla^2Z(\rho)|^2\,dx\,dt
+\int_0^T\int_{\O} |\nabla Z_1(\rho)|^4\,dx\,dt\right]
\leq C,
\end{equation}
and
\begin{equation}
\label{priori mu} 
\begin{split}
&\|\mu(\rho)\|_{L^\infty(0,T;W^{1,1}(\Omega))} + 
      \|\mu(\rho)\u\|_{L^\infty(0,TL^{3/2}(\O))\cap L^2(0,T;W^{1,1} (\O))} \leq C,\\
&     \|\partial_t \mu(\rho)\|_{L^{\infty}(0,T;W^{-1,1}(\O))}\leq C, \\
&  \|Z(\rho)\|_{L^\infty(0,T;L^{1+}(\Omega))}  +
  \|Z_1(\rho)\|_{L^\infty(0,T;L^{1+}(\Omega))} \leq C,
\end{split}
\end{equation}
where  $C>0$ is a constant which depends only on the initial data.
\end{Theorem}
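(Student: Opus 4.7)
The strategy is to build a tower of regularizations, prove existence at the deepest level by a Faedo--Galerkin argument, derive the $\kappa$-entropy at each level, and then pass to the limit using the dissipations provided by the drags and capillarity. Following the scheme of Vasseur--Yu \cite{VY-1,VY} for the quantum Navier--Stokes system, I would add to \eqref{last level approximation} an auxiliary mass diffusion $\eps\Delta\rho$ in the continuity equation, a truncation $\max(\mu'(\rho),\eta)$ in the place of $\mu'(\rho)$ in the $r_1$-drag, and a finite-dimensional Faedo--Galerkin basis for $\u$. Existence at this innermost stage follows from a Schauder fixed-point argument, the parabolic maximum principle applied to the regularized continuity equation giving a strictly positive lower bound on the density (by $\eps$) that makes all divisions by $\mu(\rho)$ or $\mu'(\rho)$ legitimate.

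The centerpiece of the a priori analysis is the $\kappa$-entropy. Testing the momentum equation against the effective velocity $\u+2\kappa\nabla s(\rho)$---or equivalently adding $(1-\kappa)$ times the classical energy identity to $\kappa$ times the BD entropy obtained by testing against $2\nabla s(\rho)$---produces an identity that simultaneously controls the quantities in the left-hand side of \eqref{Initial conditions}. The BD relation \eqref{BD relationship} is precisely what cancels the worst contribution coming from $\Dv(\rho\u\otimes\u)$ after differentiating $\nabla s(\rho)$ along the flow, and assumptions \eqref{mu estimate}--\eqref{mu estimate1} ensure that the remaining error terms involving $\rho\mu''(\rho)/\mu'(\rho)$ and $\lambda/\mu$ are controlled by the dissipations carried by $\mathbb T_\mu$ and $\mathbb S_r$. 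The cross term $2\kappa\int\nabla P(\rho)\cdot\nabla s(\rho)$ yields the last estimate of \eqref{priori estimates}, namely the bound on $\sqrt{\mu'(\rho)\rho^{\gamma-2}}\nabla\rho$ in $L^2_{t,x}$; the bounds in \eqref{priorie estimate2} on $r_0\u$, $r_1\rho|\u|^2\u/\mu'(\rho)$, and $r_2\rho|\u|\u$ are read off directly from the same identity, while the bound on $(\ln\rho_0)_-$ appearing in \eqref{Initial conditions} is obtained by using $\ln\rho$ as a multiplier in the continuity equation and absorbing the resulting drag contribution against $\inf\mu'\ge\varepsilon_1$.

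The capillary bound \eqref{J inequality for sequence} is the key structural input and is exactly the generalized B\"ohm identity of Bresch--Couderc--Noble--Vila \cite{BCNV}. After inserting the representations \eqref{eq_quantic}--\eqref{eq_quantic11} of $\sqrt{\mu(\rho)}\mathbb S_r$ in terms of $\nabla\nabla Z(\rho)$ and $k(\rho)$, the scalar product arising from testing the $r$-capillary term against the effective velocity can be rewritten, after integration by parts and using the continuity equation to trade time-derivatives against spatial-derivatives of $Z(\rho)$, as a nonnegative quadratic form that is coercive in $|\nabla^2 Z(\rho)|^2$ and $|\nabla Z_1(\rho)|^4$; this is the BCNV generalization of the J\"ungel--Matthes inequality \cite{JuMa}. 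The bounds \eqref{priori mu} on $\mu(\rho)$, $Z(\rho)$, $Z_1(\rho)$, and on $\partial_t\mu(\rho)$, then follow by interpolation between \eqref{priori estimates} and \eqref{J inequality for sequence}, combined with the momentum equation to generate the $W^{-1,1}$ bound on the time derivative. The smallness hypotheses $r_1\ll r$ and $r_2\ll \delta$ are used precisely so that the drag contributions can be absorbed into the $r$-capillary and $\delta$-pressure dissipations.

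With the uniform bounds in hand at the innermost level, I pass to the limit sequentially: first in the Galerkin dimension, then in the mass diffusion $\eps$, then in the truncation $\eta$. Strong convergence of $\rho$ in $L^p_tL^q_x$ follows from the uniform Sobolev control of $\mu(\rho_n)$, the time-derivative control in \eqref{priori mu}, and the Aubin--Lions lemma. The main obstacle, in my view, is identifying the weak limits of the nonlinear stress quantities $\sqrt{\mu(\rho)}\mathbb T_\mu$ and $\sqrt{\mu(\rho)}\mathbb S_r$ with the right-hand sides of \eqref{Tmu}--\eqref{Tmu1} and \eqref{eq_quantic}--\eqref{eq_quantic11} at the limit, because $\mu(\rho)$ is degenerate in the vacuum region and $\u$ is defined there only through the convention $\m/\rho$. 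To handle this I would express $\mathbb T_\mu$ intrinsically in terms of the triple $(\mu(\rho)/\sqrt\rho,\sqrt\rho\u,\sqrt\rho\nabla s(\rho))$, invoke strong compactness of each of these blocks, and exploit the drags $r_1,r_2$ to rule out concentration on the set $\{\rho\approx 0\}$, along the lines of \cite{VY,LaVa}. The final verification that the limit object is a $\kappa$-entropy weak solution (including the weak compatibility formulas of Definition \ref{defweak}) then amounts to a straightforward closure argument using the lower semi-continuity of the dissipation terms.
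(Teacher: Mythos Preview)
Your grasp of the a priori structure is essentially correct: the $\kappa$-entropy controls \eqref{priori estimates}--\eqref{priorie estimate2}, the BCNV identity \eqref{BCNV relationship} together with the generalized J\"ungel--Matthes inequality (Lemma \ref{Lemma on jungel type inequality} in the paper) yields \eqref{J inequality for sequence}, and the smallness of $r_1,r_2$ is used exactly as you say, to absorb the drag remainders into the capillary and extra-pressure dissipations. The bounds \eqref{priori mu} then follow by the algebra you outline.

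The substantive gap is in your approximation scheme. You propose a Galerkin basis for $\u$ and then say you will test the momentum equation against $\u+2\kappa\nabla s(\rho)$, ``or equivalently'' against $2\nabla s(\rho)$ for the BD part. At the Galerkin level this test is not admissible: $\nabla s(\rho)$ is a nonlinear function of $\rho$ and has no reason to belong to the finite-dimensional span $X_n$, so the identity needed for the $\kappa$-entropy cannot be obtained rigorously. The paper avoids this by adopting the Bresch--Desjardins--Zatorska augmented two-velocity formulation: one introduces $\w=\u+2\kappa\nabla s(\rho)$ and $\v=2\nabla s(\rho)$ as \emph{independent} Galerkin unknowns (in spaces $X_n$ and $Y_n$), writes a coupled system for $(\rho,\w,\v)$ with the parabolic continuity equation $\rho_t+\Dv(\rho[\w]_{\varepsilon_3})=2\kappa\Dv([\mu'(\rho)]_{\varepsilon_4}\nabla\rho)$, and adds a hyperviscosity $\varepsilon_2[\Delta^{2s}\w-\Dv((1+|\nabla\w|^2)\nabla\w)]$. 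The $\kappa$-entropy is then obtained by the legitimate choices $\psi=\w$, $\phi=\v$. The identification $\v=2\nabla s(\rho)$ is recovered only after $\varepsilon_3\to0$, by showing both satisfy the same transport equation with enough regularity for uniqueness. Your proposed artificial diffusion $\eps\Delta\rho$ is not the one compatible with the entropy structure; the diffusion $2\kappa\Delta\mu(\rho)$ arises naturally from rewriting $\Dv(\rho\u)=\Dv(\rho\w)-2\kappa\Delta\mu(\rho)$ and is what makes the cross terms cancel.

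A secondary point: you do not need a separate truncation $\max(\mu'(\rho),\eta)$ in the cubic drag. In the paper's scheme the hypothesis $\inf_{s\ge0}\mu'(s)=\varepsilon_1>0$ (imposed whenever $r_0>0$) and the parabolic lower bound $\rho\ge\underline\rho>0$ at the first level already make $\rho/\mu'(\rho)$ bounded; the cubic drag remainder is then absorbed into the capillary dissipation via Lemma \ref{Lemma on jungel type inequality}, not via a truncation.
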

\smallskip


\noindent {\bf Sketch of proof for Theorem \ref{main result 1}.}
To show Theorem \ref{main result 1}, we need to build the smooth solution to an approximation associated to \eqref{last level approximation}. Here, we adapt the ideas developed in \cite{BDZ} to construct this approximation. More precisely, we consider an augmented version of the system
which will be more appropriate to construct approximate solutions. Let us explain the idea.
\vskip0.1cm
\noindent{\it First step: the  augmented system.}
Defining a new velocity field generalizing the one introduced in the BD entropy estimate namely
$$\w=\u+ 2\kappa\nabla s(\rho)$$
and a drift velocity
$\v =2  \nabla s(\rho)$
and ${s}(\rho)$ defined in \eqref{s}.

Assuming to have a smooth solution of \eqref{last level approximation} with damping terms, it ca{v}own that $(\rho,\w,{v})$
satisfies the following system of equations
$$\rho_t + {\rm div}(\rho \w) - 2\kappa \Delta \mu(\rho) = 0$$
and
\begin{equation*}
\begin{split}
\\&(\rho\w)_t+\Dv(\rho\u\otimes\w)-2(1-\kappa)\Dv(\mu(\rho)\mathbb{D}\, \w)
 -2\kappa\Dv(\mu(\rho)\mathbf{A}(w))
\\&- (1-\kappa) \nabla(\lambda(\rho) \Dv (w-\kappa {v}))
+\nabla\rho^{\gamma}+\delta\nabla\rho^{10}
   +4(1-\kappa)\kappa \Dv (\mu(\rho)\nabla^2{s}(\rho))
\\&=- r_0 (w-2\kappa \nabla s(\rho))
 -  r_1 \rho|\w-2\kappa\nabla{s}(\rho)|(\w-2\kappa\nabla{s}(\rho))\\&
 - r_2 \frac{\rho}{\mu'(\rho)}  |\w-2\kappa\nabla{s}(\rho)|^2(\w-2\kappa\nabla{s}(\rho))
+r\rho\nabla\left(\sqrt{K(\rho)}\D(\int_0^{\rho}\sqrt{K(s)}\,ds)\right),
\end{split}
\end{equation*}
and
\begin{equation*}
\begin{split}
& (\rho{v})_t+\Dv(\rho\u\otimes{v})-2\kappa\Dv(\mu(\rho)\nabla {v})
+ 2\Dv(\mu(\rho)\nabla^t\w) + \nabla(\lambda(\rho)\Dv(\w-\kappa {v}))=0,
\end{split}
\end{equation*}
where
$${v}= 2 \nabla {s}(\rho), \qquad \w=\u+\kappa {v}$$
and 
$$ K(\rho) = 4 (\mu'(\rho))^2 / \rho .$$
This is the augmented version for which we will show that there exists global weak
solutions, adding an hyperdiffusivity $\varepsilon_2[ \Delta^{2s}\w -\Dv((1+|\nabla w|^2)\nabla w)]$  on the equation satisfied by $w$, and passing to the limit  $\varepsilon_2$ goes to zero.

\medskip

\noindent {\bf Important remark.} Note that recently Bresch-Couderc-Noble-Vila \cite{BCNV} showed the following interesting relation
\begin{equation*}
\rho\nabla\left(\sqrt{K(\rho)}\D(\int_0^{\rho}\sqrt{K(s)}\,ds)\right)
  =\Dv(F(\rho)\nabla^2 \psi(\rho))+
  \nabla\left((F'(\rho)\rho- F(\rho))\D\psi(\rho)\right),
\end{equation*}
with $F'(\rho)=\sqrt{K(\rho)\rho}$ and $\sqrt\rho \psi'(\rho) = \sqrt{K(\rho)}.$
  Thus choosing
$$F(\rho)=2\,\mu(\rho) \hbox{ and therefore }  F'(\rho)\rho- F(\rho)=\lambda(\rho),$$
this gives $\psi(\rho) = 2 {s}(\rho)$ and thus
\begin{equation}
\label{BCNV relationship}
\rho\nabla\left(\sqrt{K(\rho)}\D(\int_0^{\rho}\sqrt{K(s)}\,ds)\right)=
2 \Dv\Bigl(\mu(\rho)\nabla^2\bigl(2 {s}(\rho)\bigr)\Bigr)
   +\nabla\Bigl(\lambda(\rho)\D\bigl(2{s}(\rho)\bigr)\Bigr). 
\end{equation}
This identity will play a crucial role in the proof. It defines the appropriate capillarity term
to consider in the approximate system. Other identities will be used to define the weak  solution for the 
Navier-Stokes-Korteweg system and to pass  to the limit in it
 namely
 \begin{equation}\label{rel}
\begin{split}
&  2\mu(\rho)\nabla^2(2{\mathbf s}(\rho))
    + \lambda(\rho) \Delta (2{\mathbf s}(\rho))  = 4 \Bigl[2 \sqrt{\mu(\rho)} \nabla\nabla Z(\rho) 
    - \nabla (\sqrt{\mu(\rho)} \nabla Z(\rho)\Bigr] \\
& \hskip3cm +  (\frac{2\lambda(\rho)}{\sqrt{\mu(\rho)}}+ k(\rho))\Delta Z(\rho)\,  {\rm Id}
      - {\rm div} [ k(\rho)\nabla Z(\rho)]\,  {\rm Id}. 
\end{split}
\end{equation}
where $\displaystyle Z(\rho) = \int_0^\rho [(\mu(s))^{1/2} \mu'(s)]/s \, ds$ and
$\displaystyle k(\rho) = \int_0^\rho \frac{\lambda(s)\mu'(s)}{\mu(s)^{3/2}} ds.$  

Note that  the case considered in \cite{LaVa,VY-1, VY} is related
 $\mu(\rho) = \rho$ and $K(\rho) = 4/\rho$ which corresponds to
 the quantum Navier-Stokes system.  Note that two very interesting papers
 have been written by Antonelli-Spirito in \cite{AnSp0, AnSp} considering Navier-Stokes-Korteweg
 systems without such relation between the shear viscosity and the
 capillary coefficient. 

\begin{Remark}
The additional pressure $\delta\rho^{10}$ is used in \eqref{control preasuer}  thanks to $3\alpha_2-2\leq 10$.
\end{Remark}
\bigskip

\noindent {\it Second Step and main result concerning the compressible Navier-Stokes system.}
  To prove global existence of weak solutions of the compressible Navier-Stokes equations, we follow
the strategy introduced in \cite{LaVa, VY}.  To do so, first we approximate the viscosity
$\mu$ by a viscosity $\mu_{\varepsilon_1}$ such that $\inf_{s\in [0,+\infty)} \mu_{\varepsilon_1}'(s)\ge \varepsilon_1 >0$.
Then we use Theorem \ref{main result 1} to construct a $\kappa$ entropy weak solution
to the approximate system \eqref{last level approximation}. We then show that this $\kappa$ entropy weak solution is a renormalized solution of \eqref{last level approximation} in the sense introduced
in \cite{LaVa}. More precisely we prove the following theorem:
\begin{Theorem} \label{renorm} Let $\mu(\rho)$ verifies \eqref{regmu}--\eqref{mu estimate1}, $\lambda(\rho)$ given by \eqref{BD relationship}. If $r_0>0$, then we assume also that ${\rm inf}_{s \in [0,+\infty)} \mu'(s)=\epsilon_1 >0$. 
   Assume that  $r_1$ is small enough compared to $r$ and $r_2$  is small enough compared to $\delta$, the initial values verify
and
 \begin{equation}\label{Initial conditions}
 \begin{split}
 &   \int_\Omega \left(\rho_0\left(\frac{|\u_0+ 2\kappa \nabla s(\rho_0)|^2}{2}+(\kappa (1-\kappa)+r)\frac{|2\nabla s(\rho_0)|^2}{2}\right) \right)\, dx\\
 &\hskip4cm   +\int_\Omega 
   \left(a \frac{\rho_0^\gamma}{\gamma-1}+ \mu(\rho_0) +\delta \frac{\rho^{10}}{9}+\frac{r_0}{\varepsilon_1}|(\ln \rho_0)_-|\right)\,dx <+\infty. 
 \end{split}
 \end{equation} 
Then the $\kappa$ entropy weak solutions is a renormalized solution of \eqref{last level approximation} in the sense of 
Definition \ref{def_renormalise_u}.
\end{Theorem}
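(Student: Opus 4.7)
\bigskip

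\noindent\textbf{Proof plan for Theorem \ref{renorm}.}
The goal is to verify that the $\kappa$ entropy weak solution $(\rho,\u,\mathbb T_\mu,\mathbb S_r)$ produced by Theorem \ref{main result 1} satisfies the renormalized momentum equation of Definition \ref{def_renormalise_u}; that is, for a sufficiently large class of truncation functions $\varphi\in C^2(\R^3;\R)$ with $\varphi$ and $\nabla\varphi$ bounded, the quantity $\rho\,\varphi(\u)$ satisfies an equation in $\mathcal D'((0,T)\times\Omega)$ of the form
\begin{equation*}
\partial_t(\rho\varphi(\u))+\Dv(\rho\varphi(\u)\u)+\rho(\varphi'(\u)\u-\varphi(\u))\Dv\u
+\text{(stress and pressure terms through $\varphi'(\u)$)}=0.
\end{equation*}
The main idea, following the strategy of Lacroix-Violet--Vasseur \cite{LaVa}, is to test the weak momentum equation with $\zeta(t,x)\,\varphi'(\u_\eta)$ where $\u_\eta$ is a regularization of $\u$ defined only through the triple $(\sqrt\rho,\sqrt\rho\,\u,\mathbb T_\mu)$ that is meaningful on $\{\rho>0\}$, and then to let $\eta\to 0$.

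\medskip

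\noindent The detailed steps I would carry out are as follows. \emph{Step 1 (choice of regularization).} Because $\u$ is only defined on $\{\rho>0\}$, I would regularize the momentum $\m=\rho\u$ and recover $\u_\eta$ as $\m_\eta/\rho_\eta$ (or equivalently through a convolution of $\sqrt\rho\,\u$ divided by $\sqrt\rho+\eta$ suitably cut off), exactly as in \cite{LaVa}. The estimates \eqref{priori estimates}--\eqref{priori mu}, together with the drag-induced bound $r_2\|\rho^{1/4}\u\|_{L^4}^4\le C$ and $r_1\|\rho^{1/3}\u\|_{L^3}^3\le C$, give enough integrability to make $\rho\varphi(\u)$ and $\rho\varphi'(\u)\u$ well-defined distributions and to control commutator errors of the Friedrichs type, uniformly in $\eta$.
\emph{Step 2 (identification of the stress term).} I would rewrite the viscous divergence as
\begin{equation*}
2\Dv\bigl(\sqrt{\mu(\rho)}\,\mathbb S_\mu\bigr)+\nabla\bigl(\tfrac{\lambda(\rho)}{\mu(\rho)}\tr(\sqrt{\mu(\rho)}\mathbb S_\mu)\bigr)
\end{equation*}
using \eqref{Smu}--\eqref{Tmu1}, so that the contribution that is tested against $\varphi'(\u_\eta)$ naturally splits into a piece $\nabla(\sqrt\rho\u\cdot\mu(\rho)/\sqrt\rho)$ (handled by integration by parts on $\varphi'(\u_\eta)$) and a piece $\sqrt\rho\u\otimes\sqrt\rho\nabla s(\rho)$ (handled by the $L^\infty_t L^2_x$ bound on $\sqrt\rho\nabla s(\rho)$ combined with $\sqrt\rho\u\in L^\infty_t L^2_x$). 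The capillarity term $r$ is rewritten through \eqref{eq_quantic}--\eqref{eq_quantic11} and controlled by the $L^2_t H^2_x$ bound on $Z(\rho)$ and the $L^4_{t,x}$ bound on $\nabla Z_1(\rho)$ from \eqref{J inequality for sequence}.
\emph{Step 3 (commutators and convective term).} I would apply DiPerna--Lions--type commutator estimates to the convective identity $\partial_t(\rho\varphi(\u))+\Dv(\rho\u\varphi(\u))+\rho(\varphi'(\u)\u-\varphi(\u))\Dv\u$ starting from the continuity equation in renormalized form (which already holds since $\rho\in L^\infty(0,T;L^\gamma\cap L^{10})$ with $\sqrt\rho\,\u\in L^\infty_t L^2_x$ and $\mu(\rho)\u\in L^2_t W^{1,1}_x$). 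The truncation by a bounded $\varphi'$ keeps every nonlinear quantity integrable.
\emph{Step 4 (pressure, drag and lower-order terms).} The pressure contributions $\nabla P(\rho)+\nabla P_\delta(\rho)$ and the drag terms $r_0\u$, $r_1\rho\mu'(\rho)^{-1}|\u|^2\u$, $r_2\rho|\u|\u$ tested against $\varphi'(\u_\eta)$ pass to the limit directly because $\varphi'$ is bounded and all the quantities are in $L^1_{t,x}$ by the a priori estimates (using $r_0>0$ and the corresponding lower bound $\mu'\ge\varepsilon_1$).

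\medskip

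\noindent The step I expect to be genuinely delicate is Step 2 and Step 3 combined: namely, giving a rigorous meaning on the full spacetime domain (including the vacuum set $\{\rho=0\}$) to the product $\mathbb S_\mu\,\varphi'(\u_\eta)$ after integration by parts, and controlling the resulting commutators in the limit $\eta\to0$. The key identity making this possible is the decomposition of $\sqrt{\mu(\rho)}\mathbb T_\mu$ given in \eqref{Tmu}, which expresses the stress as a gradient of a well-defined quantity minus a tensor product of two $L^\infty_t L^2_x$ functions; this is precisely what allows one to replace $\u$ by $\u_\eta$ inside $\varphi'$ without ever needing $\u$ itself to be a Sobolev function. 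Once this passage is justified, standard density arguments extend the renormalized identity from smooth compactly supported $\varphi$ to the class required by Definition~\ref{def_renormalise_u}, concluding the proof.
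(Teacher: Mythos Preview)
There is a genuine gap in your Step~1. The regularization $u_\eta=m_\eta/\rho_\eta$ (or $(\sqrt\rho\,u)_\eta/(\sqrt\rho+\eta)$ ``suitably cut off'') does not give what you need to carry out Steps~2--3: to test the momentum equation with $\psi\,\varphi'(u_\eta)$ and apply DiPerna--Lions commutators you must control $\nabla u_\eta$ in $L^2$ uniformly in $\eta$, and near vacuum your quotient has no such bound. The strategy of \cite{LaVa}, which the paper follows, is \emph{not} to regularize $u$ via the momentum, but to first cut off in the density variable.

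Concretely, one fixes a smooth $\phi_m:\R^+\to[0,1]$ supported in $[1/(2m),2m]$ with $\phi_m\equiv1$ on $[1/m,m]$, and sets $u_m:=\phi_m(\rho)\,u$. The crucial observation is that $\nabla u_m\in L^2((0,T)\times\Omega)$ for each fixed $m$: writing
\[
\nabla u_m=\frac{\phi_m(\rho)}{\sqrt{\mu(\rho)}}\,\mathbb T_\mu
+\Bigl(\phi_m'(\rho)\,\frac{(\mu(\rho)\rho)^{1/4}}{(\mu'(\rho))^{3/4}}\Bigr)
\Bigl(\bigl(\tfrac{\rho}{\mu'(\rho)}\bigr)^{1/4}u\Bigr)\otimes
\Bigl(\tfrac{\mu'(\rho)}{\rho^{1/2}\mu(\rho)^{1/4}}\nabla\rho\Bigr),
\]
the two prefactors involving $\phi_m,\phi_m'$ are bounded (density lies in $[1/(2m),2m]$ on their support), while $\mathbb T_\mu\in L^2$, $(\rho/\mu'(\rho))^{1/4}u\in L^4$ by the cubic drag estimate in \eqref{priorie estimate2}, and $\nabla Z_1(\rho)\in L^4$ by the capillary estimate \eqref{J inequality for sequence}. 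This is exactly why those particular drag and capillary terms were built into the approximation; your outline does not exploit them in this way.

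The argument then proceeds in two layers. First one tests the weak momentum equation with $[\psi\,\phi_m(\rho)]_\varepsilon$ (legitimate since one shows $\nabla\phi_m(\rho)\in L^4_{t,x}$ and $\partial_t\phi_m(\rho)\in L^2_{t,x}$), obtaining an equation for $\rho u_m$. Second, one tests that equation with $[\psi\,\varphi'([u_m]_\varepsilon)]_\varepsilon$; the DiPerna--Lions lemma now applies because $\nabla u_m\in L^2$. One passes $\varepsilon\to0$, then $m\to\infty$ using $|\rho\,\phi_m'(\rho)|\le 2$ and $\phi_m(\rho)\to1$ a.e. Your description of how to reorganize the stress via \eqref{Tmu}--\eqref{Tmu1} and the capillarity via \eqref{eq_quantic}--\eqref{eq_quantic11}, and how to handle pressure and drag in Step~4, is along the right lines once a legitimate test function is available---but that availability is precisely the point your proposal does not secure.
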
 
 We then pass to the limit with respect to the parameters $r,r_0,r_1,r_2$ and $\delta$
to recover a renormalized weak solution of the compressible Navier-Stokes equations and prove our main theorem.

\vskip0.3cm
   \textbf{Definitions}.
   Following \cite{LaVa} (based on the work in \cite{VY}), we will  show  the existence of renormalized solutions in $\u$. Then, we will show that this renormalized solution  is a  weak solution. The renormalization provides weak stability of the advection terms $\rho \u\otimes \u$ together and
 $\rho \u\otimes \v$. Let us first define the renormalized solution:

\vskip0.3cm
\begin{Definition}\label{def_renormalise_u} Consider $\mu>0$, $3\lambda +2 \mu>0$, $r_0\geq0$, $r_1\geq0$, $r_2\ge 0$ and $r\geq0$.
We say that $(\sro,\sro\u)$ is a  renormalized weak  solution in $\u$, if  it verifies  \eqref{priori estimates}-\eqref{priori mu}, and
 for  any function $\vfi\in W^{2,\infty}(\R^d)$ with  $\varphi(s)s \in L^{\infty}(\R^d)$, there exists three measures $R_{\vfi}, \overline{R}^1_\vfi,  \overline{R}^2_\vfi \in \mathcal{M}(\R^+\times\O)$, with
 $$
 \|R_{\vfi}\|_{ \mathcal{M}(\R^+\times\O)}+ \|\overline{R}^1_{\vfi}\|_{ \mathcal{M}(\R^+\times\O)}
 +  \|\overline{R}^2_{\vfi}\|_{ \mathcal{M}(\R^+\times\O)} \leq C \|\vfi''\|_{L^\infty(\R)},
 $$
 where the constant $C$ depends only on the solution $(\sro,\sro \u)$, and
 for any function $\psi\in \Cin$,
\begin{eqnarray*}
&&\int_0^T \int_\O \left(\rho \psi_t + \sqrt \rho  \sqrt \rho \u\cdot \nabla\psi \right)dx\, dt=0,\\
&&\int_0^T \int_\O \bigl( \rho \vfi(\u) \psi_t + \rho \vfi(\u)\otimes  \u :\nabla \psi \bigr)  \> dx\, dt\\
&& \hskip.2cm -  \int_0^T \int_\Omega  \left( 2 (\sqrt{\mu(\rho)}  \Sn
+  \frac{\lambda(\rho)}{2\mu(\rho)} {\rm Tr} (\sqrt{\mu(\rho)}\mathbb S_\mu) {\rm Id}) \,  \vfi'(\u)
     \right)\cdot   \nabla\psi \, dx dt \\
&& \hskip.2cm - \,  r \int_0^T \int_\Omega \left(2(\sqrt{\mu(\rho)}  \mathbb S_r
+  \frac{\lambda(\rho)}{2\mu(\rho)} {\rm Tr} (\sqrt{\mu(\rho)}\mathbb S_r) {\rm Id}\bigr)\,  \vfi'(\u)
           \right) \cdot \nabla\psi  \, dx dt\\
&&\hskip7cm +F(\rho,\u)\, \vfi'(\u) \psi \, dx\, dt=\left \langle R_{\vfi}, \psi\right\rangle, \\
&& \int_0^T \int_\O (\mu(\rho) \psi_t + \frac{\mu(\rho)}{\sqrt \rho} \sqrt \rho \u \cdot \nabla \psi) \, dx dt  - \int_0^T  
      \int_\Omega  \frac{\lambda(\rho)}{2\mu(\rho)} {\mathrm Tr} (\sqrt{\mu(\rho)} \Tn) 
             \psi  \, dx dt = 0,
\end{eqnarray*}
where $\Sn$ is given in \eqref{Smu} and $\mathbb{T}_\mu$ is given in \eqref{Tmu1}.
The matrix $\mathbb S_r$ is compatible in \eqref{eq_quantic}, \eqref{ZZ}, and \eqref{eq_quantic11}.

The vector valued function $F$ is given by 
\begin{equation}\label{eq_F}
\begin{split}
F(\rho,\u) 
& = \sqrt{\frac{P'(\rho) \rho }{\mu'(\rho)}} \nabla \int_0^\rho \sqrt{\frac{P'(s)\mu'(s)}{s}}\, ds \\
&  \hskip.5cm +  \delta\sqrt{\frac{P_\delta'(\rho) \rho }{\mu'(\rho)}} 
                \nabla \int_0^\rho \sqrt{\frac{P_\delta'(s)\mu'(s)}{s}}\, ds
     -r_0 \u - r_1 \rho|\u|\u-\frac{r_2}{\mu'(\rho)}\rho|\u|^2\u.
 \end{split}
\end{equation}
 For every $i,j,k$ between 1 and $d$:
\begin{equation}\label{eq_viscous_renormaliseAAA}
\sqrt{\mu(\rho)}\vfi_i'(\u)[\Tn]_{jk}= \partial_j(\mu(\rho)\rho\vfi'_i(\u)\u_k)
    -\sro\ u_k\vfi'_i(\u) \sqrt\rho \partial_j s(\rho)+ \overline{R}^1_\vfi,
\end{equation}
\begin{equation}\label{eq_kortweg_renormalise}
r\vfi_i'(\u)[\nabla(\sqrt{\mu(\rho)} \nabla Z(\rho))]_{jk}= 
r\partial_j(\sqrt{\mu(\rho)} \vfi'_i(\u)\partial_k Z(\rho))+ \overline{R}^2_\vfi,
\end{equation}
and
$$\|\overline{R}^1_\vfi\|_{\mathcal{M}(\R^+\times\O)} +
\|\overline{R}^2_\vfi\|_{\mathcal{M}(\R^+\times\O)} +
\|R_\vfi\|_{\mathcal{M}(\R^+\times\O)}
\leq C\|\vfi''\|_{L^\infty}.
$$
and  for any $\overline{\psi}\in C^\infty_c(\O)$:
\begin{eqnarray*}
&&\lim_{t\to0}\int_\O \rho(t,x)\overline{\psi}(x)\,dx=\int_\O \rho_0(x)\overline{\psi}(x)\,dx,\\
&&\lim_{t\to0}\int_\O \rho(t,x)\u(t,x)\overline{\psi}(x)\,dx=\int_\O  m_0 (x)\overline{\psi}(x)\,dx,\\
&& \lim_{t\to0}\int_\O \mu(\rho)(t,x)\overline{\psi}(x)\,dx=\int_\O \mu(\rho_0)(x)\overline{\psi}(x)\,dx
\end{eqnarray*}
\end{Definition}

\bigskip

\noindent  
  We define a global weak solution of the approximate system or the compressible Navier-Stokes equation (when $r=r_0=r_1=r_2=\delta=0$) as follows
\begin{Definition}\label{defweak} 
Let ${\mathbb S}_\mu$ the symmetric part of $\mathbb {T}_\mu$ in $L^2((0,T)\times \O)$  verifying \eqref{Smu}--\eqref{Tmu1}  and  $\mathbb{S}_r$  the capillary quantity in  $L^2((0,T)\times \O)$  given by \eqref{eq_quantic}--\eqref{eq_quantic11}. Let us denote
$P(\rho) = a \rho^\gamma$ and $P_\delta (\rho) = \delta \rho^{10}$.
We say that $(\rho,\u)$ is a weak solution to \eqref{last level approximation}--\eqref{ZZ}, if it satisfies the {\it a priori} estimates \eqref{priori estimates}--\eqref{priori mu} and for any function $\psi \in {\mathcal C}_c^\infty ((0,T)\times \Omega)$ 
verifying 
\begin{equation}
\begin{split}
& \int_0^T \int_\Omega (\rho \partial_t \psi + \rho \u \cdot \nabla \psi) \, dxdt= 0, \\
&\int_0^T \int_\Omega (\rho \u\partial_t \psi + \rho \u\otimes \u : \nabla \psi )\, dx dt \\
&    \hskip1.5cm  -  \int_0^T \int_\Omega  2 ( \sqrt{\mu(\rho)} \mathbb{S}_\mu  
              + \frac{\lambda(\rho)}{2\mu(\rho)} {\rm Tr} (\sqrt{\mu(\rho)} \mathbb S_\mu) 
                    {\rm Id}) \cdot  \nabla\psi \, dx dt  \\
&    \hskip1.5cm  -   r \int_0^T \int_\Omega  2 ( \sqrt{\mu(\rho)} \mathbb{S}_r  
              + \frac{\lambda(\rho)}{2\mu(\rho)} {\rm Tr} (\sqrt{\mu(\rho)} \mathbb S_r) 
                    {\rm Id}) \cdot\nabla\psi  \, dx dt \\
&  \hskip7cm + F(\rho,\u)  \, \psi \, dx dt = 0,\\
&  \int_0^\infty \int_\O \left(\mu(\rho) \psi_t + \frac{\mu(\rho)}{\sqrt \rho} \sqrt \rho \u \cdot \nabla \psi\right)
     dx \, dt  \\
&\hskip5cm   
  - \int_0^T \int_\Omega  \frac{\lambda(\rho)}{2\mu(\rho)}
      {\mathrm Tr} (\sqrt{\mu(\rho)}\mathbb T_\mu) \psi \, dx dt = 0,
\end{split}
\end{equation} 
with $F$ given through \eqref{eq_F}  and for any $\overline \psi \in {\mathcal C}_c^\infty(\O)$:
\begin{eqnarray*}
&&\lim_{t\to0}\int_\O \rho(t,x)\overline{\psi}(x)\,dx=\int_\O \rho_0(x)\overline{\psi}(x)\,dx,\\
&&\lim_{t\to0}\int_\O \rho(t,x)\u(t,x)\overline{\psi}(x)\,dx=\int_\O  m_0 (x)\overline{\psi}(x)\,dx,\\
&& \lim_{t\to0}\int_\O \mu(\rho)(t,x)\overline{\psi}(x)\,dx=\int_\O \mu(\rho_0)(x)\overline{\psi}(x)\,dx.
\end{eqnarray*}
\end{Definition}

\bigskip

\noindent {\bf Remark.}  As mentioned in \cite{BrGiLa}, the equation on $\mu(\rho)$ is important: By taking $\psi= {\rm div} \varphi$
for all $\varphi \in {\mathcal C}_0^\infty$, we can write the equation satisfied by $\nabla \mu(\rho)$
namely
\begin{equation} \label{grad}
\begin{split}
\partial_t \nabla\mu(\rho) +  {\rm div}(\nabla\mu(\rho) \otimes \u)   
& =
      {\rm div}(\nabla\mu(\rho) \otimes \u)    - \nabla {\rm div} (\mu(\rho) \u) \\
& \hskip3cm - \nabla\bigl( \frac{\lambda(\rho)}{2\mu(\rho)} {\rm Tr} (\sqrt{\mu(\rho)}{\mathbb T}_\mu)\Bigr) \\
& = - {\rm div}(\sqrt{\mu(\rho)} {}^t{\mathbb T}_\mu)   - \nabla\bigl( \frac{\lambda(\rho)}{2\mu(\rho)} {\rm Tr} (\sqrt{\mu(\rho)}{\mathbb T}_\mu)\Bigr). \\
\end{split}
\end{equation}
This will justify in some sense the two-velocities formulation introduced in \cite{BDZ} with the 
extra velocity linked to $\nabla\mu(\rho)$.


\section{The first level of approximation procedure}
The goal of this section is to construct a sequence of approximated solutions satisfying the compactness structure  to prove Theorem \ref{main result 1} namely the existence of weak solutions
of the  approximation system with capillarity and drag terms. Here we present the first level of approximation procedure.

\medskip

\noindent  1. The continuity equation
\begin{equation}
\label{approximation of the continuity equation}
\begin{split}&\rho_t+\Dv(\rho[\w]_{\varepsilon_3})=2\kappa\Dv\left([\mu'(\rho)]_{\varepsilon_4}\nabla\rho\right),
\end{split}
\end{equation}
with modified initial data
$$\rho(0,x)=\rho_0\in C^{2+\nu}(\bar{\O}), \quad0<\underline{\rho}\leq \rho_0(x)\leq \bar{\rho}.$$ 
Here $\varepsilon_3$ and $\varepsilon_4$ denote the standard regularizations by mollification with respect to space and time. 
This is a parabolic equation recalling that in this part ${\rm Inf}_{[0,+\infty)} \mu'(s) >0$. Thus, we can apply the standard theory of parabolic equation to solve it when $\w$ is given smooth enough. In fact,  the exact same equation was solved in paper \cite{BDZ}. In particular, we are able to get the following bound on the density at this level approximation
\begin{equation}
\label{low and upper bound on density}
0<\underline{\rho}\leq \rho(t,x)\leq\bar{\rho}<+\infty.
\end{equation}

\medskip

 \noindent 2. The momentum equation with drag terms is replaced by its Faedo-Galerkin approximation with the additional regularizing term $\varepsilon_2[ \Delta^{2s}\w -\Dv((1+|\nabla w|^2)\nabla w)]$ where $s\ge 2$
 \begin{equation}
 \begin{split}
 \label{approximation of the momentum equation}
&\int_{\O}\rho\w\cdot\psi\,dx-\int_0^t\int_{\O}\left(\rho([\w]_{\varepsilon_3}-2\kappa\frac{[\mu'(\rho)]_{\varepsilon_4}}{\rho}\nabla\rho)\otimes\w\right):\nabla\psi\,dx\,dt
\\&+2(1-\kappa)\int_0^t\int_{\O}\mu(\rho)\mathbb{D}\w:\nabla\psi\,dx\,dt+2\kappa\int_0^t\int_{\O}\mu(\rho)\mathbf{A}(w):\nabla\psi\,dx\,dt
\\&+(1-\kappa)\int_0^t\int_{\O}\lambda(\rho)\Dv\w\Dv\psi\,dx\,dt-2\kappa(1-\kappa)\int_0^t\int_{\O}\mu(\rho)\nabla\v:\nabla\psi\,dx\,dt
\\&-\kappa(1-\kappa)\int_0^t\int_{\O}\lambda(\rho)\Dv\v\Dv\psi\,dx\,dt-\int_0^t\int_{\O}\rho^{\gamma}\Dv\psi\,dx\,dt
-\delta\int_0^t\int_{\O}\rho^{10}\Dv\psi\,dx\,dt
\\&+\varepsilon_2\int_0^t\int_{\O}\left( \Delta^s\w\cdot\Delta^s\psi+(1+|\nabla \w|^2)\nabla\w:\nabla \psi\right)\,dx\,dt=
-\int_0^t\int_{\O} r_0 (\w-2\kappa\nabla{s}(\rho))\cdot\psi\,dx\,dt
 \\ & -r_1\int_0^t\int_{\O} \rho|\w-2\kappa\nabla{s}(\rho)|(\w-2\kappa\nabla{s}(\rho))\cdot\psi\,dx\,dt
\\&-r_2\int_0^t\int_{\O}\frac{\rho}{\mu'(\rho)}|\w-2\kappa\nabla{s}(\rho)|^2(\w-2\kappa\nabla{s}(\rho))\cdot\psi\,dx\,dt
\\&-r\int_0^t\int_{\O} \sqrt{K(\rho)}\D(\int_0^{\rho}\sqrt{K(s)}\,ds)\Dv (\rho\psi)\,dx\,dt+\int_{\O}\rho_0\w_0\cdot\psi\,dx
 \end{split}
 \end{equation}
satisfied for any $t>0$ and any test function $\psi\in C([0,T],X_n)$,
 where $\lambda (\rho)=  2(\mu'(\rho)\rho-\mu(\rho))$, 
 and ${s}'(\rho)= \mu'(\rho) /\rho
$,  and  $X_n=\text{span}\{e_i\}_{i=1}^{n}$ is an orthonormal basis in $W^{1,2}(\O)$ with $e_i\in C^{\infty}(\O)$ for any integers $i>0$.

 \medskip

3. The Faedo-Galerkin approximation for the equation on the drift velocity ${v}$
reads
\begin{equation}
\begin{split}
\label{artificial equation}
&\int_{\O}\rho\v\cdot\phi\,dx-\int_0^t\int_{\O}(\rho ([\w]_{\varepsilon_3}-2\kappa\frac{[\mu'(\rho)]_{\varepsilon_4}}{\rho} \nabla\rho)\otimes\v):\nabla\phi\,dx\,dt
\\&+2\kappa\int_0^t\int_{\O}\mu(\rho)\nabla\v:\nabla\phi\,dx\,dt
+ \kappa\int_0^t\int_{\O} \lambda(\rho)\Dv\v\, \Dv\phi\,dx\,dt
\\&-\int_0^t\int_{\O}
\lambda(\rho)\Dv\w\Dv\phi\,dx\,dt
+2\int_0^t\int_{\O}\mu(\rho)\nabla^T\w:\nabla\phi\,dx\,dt
=\int_{\O}\rho_0\v_0\cdot\phi\,dx
\end{split}
\end{equation}
satisfied for any $t>0$ and any test function $\phi\in C([0,T],Y_n)$, where $Y_n=\text{span}\{b_i\}_{i=1}^n$ and $\{b_i\}_{i=1}^{\infty}$ is an orthonormal
basis in $W^{1,2}(\O)$ with $b_i\in C^{\infty}(\O)$ for any integers $i>0.$\\

The above full approximation is similar to the ones in \cite{BDZ}.  We can repeat the same argument as their paper to obtain the local existence of solutions to the Galerkin approximation. In order to extend the local solution  to the global one, the uniform bounds are necessary so that the corresponding procedure can be iterated.

\subsection{The energy estimate if the solution is regular enough.}
 For any fixed $n>0,$
choosing  test functions $\psi=\w, \,\phi=\v$ in \eqref{approximation of the momentum equation} and \eqref{artificial equation}, we find that 
 $(\rho,\w,\v)$ satisfies the following $\kappa-$entropy equality
\begin{equation}
\label{entropy for first level approximation}
\begin{split}
&\int_{\O}\left(\rho\left(\frac{|\w|^2}{2}+(1-\kappa)\kappa\frac{|\v|^2}{2}\right)+\frac{\rho^{\gamma}}{\gamma-1}+\delta\frac{\rho^{10}}{9}\right)\,dx
+2(1-\kappa)\int_0^t
 \int_{\O}\mu(\rho)|\mathbb{D}\w-\kappa\nabla\v|^2\,dx\,dt
\\
&+ (1-\kappa)\int_0^t\int_{\O} \lambda(\rho)(\Dv\w-\kappa\Dv\v)^2\,dx\,dt+
+2\kappa\int_0^t\int_{\O}\frac{\mu'(\rho)p'(\rho)}{\rho}|\nabla\rho|^2\,dx\,dt
\\&+2\kappa\int_0^t\int_{\O}\mu(\rho)|A\w|^2\,dx\,dt+\varepsilon_2\int_0^t\int_{\O}\left( |\Delta^s\w|^2+(1+|\nabla \w|^2)|\nabla \w|^2\right)\,dx\,dt
\\&+ r \int_0^t \int_{\O} \sqrt{K(\rho)} \Delta (\int_0^\rho \sqrt{K(s)}\, ds) {\rm div}(\rho w) \,dx\,dt
+20\kappa\int_0^t\int_{\O}\mu'(\rho)\rho^8|\nabla\rho|^2\,dx\,dt
\\& 
+ r_0  \int_0^t \int_\O (w-2\kappa\nabla{s}(\rho))\cdot w \, dx\,dt 
+ r_1 \int_0^t\int_{\O}  \rho|\w-2\kappa\nabla{s}(\rho)|(\w-2\kappa\nabla{s}(\rho))\cdot\w\,dx\,dt
\\&+ r_2 \int_0^t\int_{\O}\frac{\rho}{\mu '(\rho)}|\w-2\kappa\nabla{s}(\rho)|^2(\w-2\kappa\nabla{s}(\rho))\cdot\w\,dx\,dt
\\&= \int_{\O}\left(\rho_0\left(\frac{|\w_0|^2}{2}+(1-\kappa)\kappa\frac{|\v_0|^2}{2}\right)+\frac{\rho_0^{\gamma}}{\gamma-1}+\delta\frac{\rho_0^{10}}{9}\right)\,dx-\int_0^T\int_{\O}\rho^{\gamma}\Dv([\w]_{\varepsilon_3}-\w)\,dx\,dt
\\&-\delta\int_0^T\int_{\O}\rho^{10}\Dv([\w]_{\varepsilon_3}-\w)\,dx\,dt,
\end{split}
\end{equation}
where ${s}'= {\mu'(\rho)}/{\rho}$ and $p(\rho)=\rho^{\gamma}.$
Compared to the calculations made in \cite{BDZ},  we have to take care of the capillary term and then to  take care of the drag terms showing that they can be controlled using that $\int_{s\in [0,T]} \mu'(s) \ge \varepsilon_1$ for the linear drag, using the extra pressure term $\delta \rho^{10}$ for the quadratic drag term and using the capillary term $r \rho \nabla(\sqrt{K(\rho)} \Delta (\int_0^\rho \sqrt{K(s)})$ for the cubic drag term. To do so, let us provide some properties on the capillary term and rewrite the terms coming from the drag quantities.

\medskip

\subsubsection{Some properties on the capillary term}

 Using the mass equation, the capillary term  in the entropy estimates reads
\begin{equation}
\begin{split}
&\int_\Omega  \sqrt{K(\rho)}
  \Delta(\int_0^\rho \sqrt{K(s)} \, ds)\,  \Dv(\rho w)
=  \frac{r}{2} \frac{d}{dt}\int_\Omega |\nabla \int_0^\rho \sqrt{K(s)} \, ds|^2 \\
   &     + 2\kappa \int_{\O}
    \sqrt{K(\rho)}  \Delta(\int_0^\rho \sqrt{K(s)} \, ds)\,  \Delta \mu(\rho) = I_1 + I_2 .
\end{split}
\end{equation}
In fact, we write term $I_1$ as follows $$\frac{r}{2} \frac{d}{dt}\int_\Omega |\nabla \int_0^\rho \sqrt{K(s)} \, ds|^2 =\frac{r}{2} \frac{d}{dt}\int_\Omega \rho|\nabla{s}(\rho)|^2\,dx. $$
By  \eqref{BCNV relationship}, we have
\begin{equation}
\begin{split}
I_2 &=  \int_{\O}
   \sqrt{K(\rho)}  \Delta(\int_0^\rho \sqrt{K(s)} \, ds)\,  \Delta \mu(\rho)
\\&=  - \int_{\O}  \rho \nabla \Bigl( \sqrt{K(\rho)}  \Delta(\int_0^\rho \sqrt{K(s)} \, ds)\Bigr)
   \cdot \nabla {s}(\rho)
   \\&
=   \int_{\O}  2 \mu(\rho) |2\nabla^2 {s}(\rho)|^2 + \lambda(\rho)|2\Delta {s}(\rho)|^2.
\end{split}
\end{equation}
\smallskip

\noindent {\it Control of norms using $I_2$.} Let us first recall that since
$$\lambda(\rho) = 2(\mu'(\rho)\rho- \mu(\rho)) > -2\mu(\rho)/3,$$
there exists $\eta >0$ such that
$$
2 \int_0^T\int_{\O}\mu(\rho)|\nabla^2{s}(\rho)|^2\,dx\,dt
+ \int_0^T\int_{\O}\lambda(\rho)|\Delta{s}(\rho)|^2\,dx\,dt$$
$$\hskip3cm \ge \eta \Bigl[ 
2 \int_0^T\int_{\O}\mu(\rho)|\nabla^2{s}(\rho)|^2\,dx\,dt
+ \frac{1}{3}\int_0^T\int_{\O}\mu(\rho)|\Delta{s}(\rho)|^2\,dx\,dt \Bigr].
$$
As the second term in the right-hand side is positive, lower bound on the quantity 
\begin{equation} \label{kor}
\int_0^T\int_{\O}\mu(\rho)|\nabla^2{s}(\rho)|^2\,dx\,dt
\end{equation}
will provide the same lower bound on $I_2$.

Let us now precise the norms which are controlled by \eqref{kor}. To do so, we need to rely on the following lemma on the density. In this lemma, we prove a more general entropy dissipation inequality than the one introduced by J\"ungel in \cite{J} and more general than those by J\"ungel-Matthes in \cite{JuMa}.
\begin{Lemma}
\label{Lemma on jungel type inequality} Let $\mu'(\rho)\rho<k\mu(\rho)$ for $2/3<k<4$ and 
$${s}(\rho)= \int_0^\rho \frac{\mu'(s)}{s} \, ds, \qquad
    Z(\rho) =\int_0^\rho \frac{\sqrt{\mu(s)}}{s}\mu'(s)\, ds, \qquad
    Z_1(\rho) = \int_0^\rho \frac{\mu'(s)}{(\mu(s))^{1/4}s^{1/2}}   \, ds.$$
{\rm i)} Assume $\rho>0$ and $\rho\in L^2(0,T;H^2(\Omega))$   then there exists $\varepsilon(k) >0$, such that we have the following estimate
\begin{equation*}
\int_0^T\int_{\O}|\nabla^2Z(\rho)|^2\,dx\,dt+\varepsilon(k)\int_0^T\int_{\O}\frac{\rho^2}{\mu(\rho)^{3}}|\nabla Z(\rho)|^4\,dx\,dt
\leq \frac{C}{\varepsilon(k)} \int_0^T\int_{\O}\mu(\rho)|\nabla^2{s}(\rho)|^2\,dx\,dt,
\end{equation*}
where $C$ is a  universal positive constant. 

\noindent {\rm ii)} Consider a sequence of smooth densities $\rho_n>0$ such that $Z(\rho_n)$ and $Z_1(\rho_n)$ converge strongly in $L^1((0,T)\times\Omega)$ respectively to $Z(\rho)$ and $Z_1(\rho)$ and $\sqrt{\mu(\rho_n)} \nabla^2 {\mathbf s}(\rho_n)$
is uniformly bounded in $L^2((0,T)\times\Omega)$. Then
\begin{equation*}
\int_0^T\int_{\O}|\nabla^2Z(\rho)|^2\,dx\,dt+\varepsilon(k)\int_0^T\int_{\O}|\nabla Z_1(\rho)|^4\,dx\,dt
\leq C < +\infty
\end{equation*}
\end{Lemma}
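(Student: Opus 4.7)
The plan is to reduce part~(i) to an integration-by-parts identity relating $\int|\nabla^2 Z|^2$ and $\int\mu|\nabla^2 s|^2$, then extract the quartic term via the structural hypothesis $\rho\mu'<k\mu$, and finally deduce part~(ii) from part~(i) by a weak lower semi-continuity argument.

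\textbf{Step 1 (Chain-rule identities).} From $s'(\rho)=\mu'(\rho)/\rho$ together with the definitions of $Z$ and $Z_1$ I find
$$\nabla Z(\rho)=\sqrt{\mu(\rho)}\,\nabla s(\rho),\qquad \nabla Z_1(\rho)=\frac{\sqrt{\rho}}{\mu(\rho)^{1/4}}\nabla s(\rho),$$
so that $|\nabla Z_1|^4=\rho^2\mu^{-3}|\nabla Z|^4$ (this reconciles the two formulations of the quartic term in the statement), and differentiating once more and squaring,
$$|\nabla^2 Z|^2 = \mu|\nabla^2 s|^2 + \rho\,\nabla^2 s:(\nabla s\otimes\nabla s) + \frac{\rho^2}{4\mu}|\nabla s|^4.$$

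\textbf{Step 2 (Integration by parts and coercivity).} On the torus, using $2\nabla^2 s\cdot\nabla s=\nabla|\nabla s|^2$ and $\nabla\rho\cdot\nabla s=(\rho/\mu'(\rho))|\nabla s|^2$, I obtain
$$\int_\Omega \rho\,\nabla^2 s:(\nabla s\otimes\nabla s)\,dx = -\frac{1}{2}\int_\Omega\frac{\rho}{\mu'}|\nabla s|^4\,dx - \frac{1}{2}\int_\Omega\rho\,\Delta s\,|\nabla s|^2\,dx.$$
Substituting back in the pointwise identity and integrating in time yields
$$\int_0^T\!\!\int_\Omega\mu|\nabla^2 s|^2 = \int_0^T\!\!\int_\Omega|\nabla^2 Z|^2 + \frac{1}{2}\int_0^T\!\!\int_\Omega\frac{\rho}{\mu'}|\nabla s|^4 - \frac{1}{4}\int_0^T\!\!\int_\Omega|\nabla Z_1|^4 + \frac{1}{2}\int_0^T\!\!\int_\Omega\rho\,\Delta s\,|\nabla s|^2.$$
The hypothesis $\rho\mu'<k\mu$ gives $\rho/\mu'\ge \rho^2/(k\mu)$, so the second term on the right is at least $\frac{1}{2k}\int|\nabla Z_1|^4$, while the cross-term is controlled by Young's inequality $|\rho\Delta s|\nabla s|^2|\le \eta\,\mu|\nabla^2 s|^2+C\eta^{-1}(\rho^2/\mu)|\nabla s|^4$. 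Applying integration by parts a second time to $\int\rho\Delta s|\nabla s|^2$, which feeds it back into $\int\rho\nabla^2 s:\nabla s\otimes\nabla s$, and exploiting both sides of the two-sided bound $\alpha_1\mu\le\rho\mu'\le\alpha_2\mu$, allows to optimize $\eta$ so that the coefficient in front of $\int|\nabla Z_1|^4$ becomes strictly positive precisely when $k<4$ and the $\eta$-small $\int\mu|\nabla^2 s|^2$ term is absorbed on the left; this produces the $\varepsilon(k)>0$ of assertion~(i).

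\textbf{Step 3 (Part (ii) by lower semi-continuity).} For a smooth sequence $\rho_n>0$ verifying the assumptions of (ii), applying (i) to $\rho_n$ gives uniform $L^2_{t,x}$ bounds on $\nabla^2 Z(\rho_n)$ and uniform $L^4_{t,x}$ bounds on $\nabla Z_1(\rho_n)$. The assumed strong $L^1$ convergences $Z(\rho_n)\to Z(\rho)$ and $Z_1(\rho_n)\to Z_1(\rho)$ identify the weak limits of the corresponding gradients with $\nabla^2 Z(\rho)$ and $\nabla Z_1(\rho)$ in the sense of distributions, and assertion~(ii) then follows from the weak (resp.~weak-$\ast$) lower semi-continuity of the $L^2$ and $L^4$ norms.

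The main obstacle is Step~2: a naive one-step estimate only delivers the weaker threshold $k<2$, since the coefficient $1/(2k)-1/4$ must be positive before any cross-term control. Reaching the sharp range $k<4$ announced in the statement requires the iterated integration by parts of $\int\rho\Delta s|\nabla s|^2$ together with a tight use of the full two-sided structural bound $\alpha_1\mu\le\rho\mu'\le\alpha_2\mu$; this is exactly the generalization of the J\"ungel--Matthes algebraic identity (which corresponds to $\mu(\rho)=\rho$, i.e.~$k$ near $1$) referred to in the statement of the lemma.
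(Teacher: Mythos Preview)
Your Step~1 and Step~3 are correct and match the paper. The gap is in Step~2, and it is a real one: the hand-waved ``iterated integration by parts together with the two-sided bound'' does not actually close the estimate for $k<4$.

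Here is why. After your integration by parts you reach the exact identity (valid whatever you do next)
\[
\int\mu|\nabla^2 s|^2=\int|\nabla^2 Z|^2-\tfrac14\int|\nabla Z_1|^4-A,
\qquad A:=\int\rho\,\nabla^2 s:\nabla s\otimes\nabla s,
\]
and the single scalar relation
\[
2A+B+C=0,\qquad B:=\int\rho\,\Delta s\,|\nabla s|^2,\qquad C:=\int\tfrac{\rho}{\mu'}|\nabla s|^4\ \ge\ \tfrac1k\int|\nabla Z_1|^4.
\]
Your choice is to write $-A=\tfrac{C}{2}+\tfrac{B}{2}$ and Young-estimate $B/2$; this yields the coefficient $\tfrac{1}{2k}-\tfrac14$, hence only $k<2$. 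Doing the integration by parts ``a second time'' on $B$ simply reproduces $2A+B+C=0$; it is the same linear relation and adds no information, so the iteration you describe is circular. The lower bound $\alpha_1\mu\le\rho\mu'$ never enters either---the paper's argument uses only the upper bound $\rho\mu'<k\mu$.

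The trick you are missing is a different regrouping of the same identity. Using $2A+B+C=0$ once, write instead
\[
-A\;=\;C+(A+B),
\]
and control the whole lump $I_1:=A+B$ by Young:
\[
|I_1|\le \int\rho|\nabla s|^2\bigl(|\nabla^2 s|+|\Delta s|\bigr)
\le \varepsilon\int|\nabla Z_1|^4+\tfrac{C_d}{\varepsilon}\int\mu|\nabla^2 s|^2.
\]
You now keep the \emph{full} good term $C\ge\tfrac1k\int|\nabla Z_1|^4$, and the resulting coefficient is $\tfrac1k-\tfrac14-\varepsilon$, positive precisely for $k<4$. In the paper this regrouping appears as the splitting of the cross term $-\int\frac{\rho}{\mu^{3/2}}\nabla^2 Z:(\nabla Z\otimes\nabla Z)$ into $I_1+I_2$, obtained by moving the derivative onto $\rho/\sqrt{\mu}$ (producing $I_2=C-\tfrac12\int|\nabla Z_1|^4$) and onto $\nabla s\otimes\nabla s$ (producing $I_1=A+B$); the point is that this particular integration by parts leaves $C$ intact rather than halving it.
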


\begin{Remark} The case of $Z=2\sqrt{\rho}$ for
the inequality was proved in \cite{J}, which is critical to derive the uniform bound on approximated velocity in $L^2(0,T;L^2(\O))$ in \cite{VY-1,VY}.  The above lemma will play a similar role in this paper.
\end{Remark}
\begin{proof} Let us first prove the part i). Note that  $Z'(\rho)=\frac{\sqrt{\mu(\rho)}}{\rho}\mu'(\rho)$, we  get the following calculation:
\begin{equation*}
\begin{split}
\label{key-1}
\sqrt{\mu(\rho)}\nabla^2s(\rho)&=\sqrt{\mu(\rho)}\nabla(\frac{\nabla\mu(\rho)}{\rho})=\sqrt{\mu(\rho)}\nabla\left(\frac{1}{\sqrt{\mu(\rho)}}\nabla Z(\rho)\right)
\\&=\nabla^2 Z(\rho)-\frac{\nabla Z(\rho)}{\sqrt{\mu(\rho)}}\otimes\nabla\sqrt{\mu(\rho)}
\\&=\nabla^2 Z(\rho) -\frac{\rho\nabla Z(\rho)\otimes \nabla Z(\rho)}{2 \mu(\rho)^{\frac{3}{2}}}.
\end{split}
\end{equation*}
Thus, we have \begin{equation}
\begin{split}
\label{key-2}
\int_{\O}\mu(\rho)|\nabla^2s(\rho)|^2\,dx&=\int_{\O}|\nabla^2Z(\rho)|^2\,dx
   +\frac{1}{4}\int_{\O}\frac{\rho^2}{\mu(\rho)^3}|\nabla Z(\rho)|^4\,dx
\\&-
\int_{\O}\frac{\rho}{\mu(\rho)^{\frac{3}{2}}}\nabla^2Z(\rho) :(\nabla Z(\rho)\otimes \nabla Z(\rho))\,dx.
\end{split}
\end{equation}
By integration by parts, the cross product term reads as follows
\begin{equation}
\begin{split}
\label{key-3}&-\int_{\O}\frac{\rho}{\mu(\rho)^{\frac{3}{2}}}\nabla^2Z(\rho):(\nabla Z(\rho)\otimes \nabla Z(\rho))\,dx \\
& =
-\int_{\O}\frac{\rho\sqrt{\mu(\rho)}}{\mu(\rho)}\nabla^2Z(\rho):(\frac{\nabla Z(\rho)}{\sqrt{\mu(\rho)}}\otimes \frac{\nabla Z(\rho)}{\sqrt{\mu(\rho)}})\,dx
\\&=\int_{\O}\frac{\rho}{\mu(\rho)}\sqrt{\mu(\rho)}\nabla Z(\rho)\cdot\Dv(\frac{\nabla Z(\rho)}{\sqrt{\mu(\rho)}}\otimes \frac{\nabla Z(\rho)}{\sqrt{\mu(\rho)}})\,dx \\
& \hskip1cm +\int_{\O}\nabla(\frac{\rho}{\sqrt{\mu(\rho)}})\otimes\nabla Z(\rho):\frac{\nabla Z(\rho)\otimes \nabla Z(\rho)}{\mu(\rho)}\,dx
\\&=I_1+I_2.
\end{split}
\end{equation}
To this end, we are able to control $I_1$ directly,
\begin{equation}\label{key-I1}\begin{split}
|I_1|&\leq \varepsilon\int_{\O}\frac{\rho^2}{\mu(\rho)^3}|\nabla Z(\rho)|^4\,dx
   + \frac{C}{\varepsilon} \int_{\O}\mu(\rho)|\nabla(\frac{\nabla Z(\rho)}{\sqrt{\mu(\rho)}})|^2\,dx
\\&\leq \varepsilon\int_{\O}\frac{\rho^2}{\mu(\rho)^3}|\nabla Z(\rho)|^4\,dx
+ \frac{C}{\varepsilon}\int_{\O}\mu(\rho)|\nabla^2 s(\rho)|^2\,dx,
\end{split}
\end{equation}
where $C$ is a universal positive constant.
We calculate $I_2$ to have
\begin{equation}
\label{key-I2}
\begin{split}
I_2&=\int_{\O}\nabla(\frac{\rho}{\sqrt{\mu(\rho)}})\otimes\nabla Z(\rho):\frac{\nabla Z(\rho)\otimes \nabla Z(\rho)}{\mu(\rho)}\,dx
\\&=\int_{\O}\frac{\nabla\rho\otimes\nabla Z(\rho)}{\mu(\rho)^{\frac{3}{2}}}:\left(\nabla Z(\rho)\otimes \nabla Z(\rho)\right)\,dx \\
&\hskip2cm  -\int_{\O}\frac{\rho}{\mu(\rho)^2}\nabla\sqrt{\mu(\rho)}\otimes \nabla Z(\rho):\left(\nabla Z(\rho)\otimes \nabla Z(\rho)\right)\,dx
\\&=\int_{\O}\frac{\rho}{\mu(\rho)^2\mu(\rho)'}|\nabla Z(\rho)|^4\,dx-\frac{1}{2}\int_{\O}\frac{\rho^2}{\mu(\rho)^3}|\nabla Z(\rho)|^4\,dx.
\end{split}
\end{equation}
Relying on \eqref{key-2}-\eqref{key-I2}, we have
\begin{equation*}
\begin{split}&
\int_{\O}|\nabla^2Z(\rho)|^2\,dx+\int_{\O}\frac{\rho}{\mu(\rho)^2\mu'(\rho)}|\nabla Z(\rho)|^4\,dx
-(\frac{1}{4}+\varepsilon)\int_{\O}\frac{\rho^2}{\mu(\rho)^3}|\nabla Z(\rho)|^4\,dx
\\&\leq \frac{C}{\varepsilon} \int_{\O}\mu(\rho)|\nabla^2 s(\rho)|^2\,dx.
\end{split}
\end{equation*}
Since $k_1\mu'(s) s\leq \mu(s),$ we have
$$\frac{s}{\mu^2(s)\mu'(s)}-(\frac{1}{4}+\varepsilon)\frac{s^2}{\mu(s)^3}\geq (k_1-\frac{1}{4}-\varepsilon)\frac{s^2}{\mu(s)^3}>\varepsilon\frac{s^2}{\mu(s)^3},$$
 where we choose $k_1>\frac{1}{4}$.
 This implies
 $$\int_{\O}|\nabla^2Z(\rho)|^2\,dx+\varepsilon\int_{\O}\frac{\rho^2}{\mu(\rho)^3}|\nabla Z(\rho)|^4\,dx
\leq \frac{C}{\varepsilon} \int_{\O}\mu(\rho)|\nabla^2 s(\rho)|^2\,dx.$$
This ends the proof of part i). Concerning part ii), it suffices to pass to the limit in the inequality
proved previously using the lower semi continuity on the left-hand side.
\medskip

\end{proof}

\subsubsection{Drag terms control.} We have to discuss three kind of drag terms: Linear drag term, quadratic drag term and finally cubic drag term.

\medskip

\noindent {a)  \it Linear drag terms.} As in previous works \cite{BD,VY-1,Z}, we need to choose a linear drag with constant coefficient
\begin{equation}
\begin{split}
\label{11additional velocity term-control}
&r_0\int_0^t\int_{\O}(\w-2\kappa\nabla{s}(\rho))\cdot\w\,dx\,dt
=r_0\int_0^t\int_{\O}|\w-2\kappa\nabla{s}(\rho)|^2\,dx\,dt \\&
+r_0\int_0^t\int_{\O}(\w-2\kappa\nabla{s}(\rho))
    \cdot(2\kappa\nabla{s}(\rho))\,dx\,dt.
\end{split}
\end{equation}
The second term on the right side of \eqref{11additional velocity term-control} reads
\begin{equation*}
\begin{split}r_0\int_0^t\int_{\O}(\w-2\kappa\nabla{s}(\rho))&\cdot(2\kappa\nabla{s}(\rho))\,dx\,dt
=r_0\int_0^t\int_{\O}\rho(\w-2\kappa\nabla{s}(\rho))\cdot\frac{2\kappa\nabla{s}(\rho)}{\rho}\,dx\,dt
\\&
=r_0\int_0^t\int_{\O}\rho(\w-2\kappa\nabla{s}(\rho))\cdot2\kappa\nabla g(\rho)\,dx\,dt\\&
=r_0\int_0^t\int_{\O}\rho_t g(\rho)\,dx\,dt,
\end{split}
\end{equation*}
where $g'(\rho)= \frac{s'(\rho)}{\rho}=\frac{\mu'(\rho)}{\rho^2}$ and 
$g(\rho)=\int_1^\rho\frac{\mu'(r)}{r^2}\,dr.$
Letting $$G(\rho)=\int_1^\rho\int_1^r\frac{\mu'(\zeta)}{\zeta^2}\,d\zeta\,dr,$$ then 
$$r_0\int_{\O}\rho_t g(\rho)\,dx= r_0\frac{\partial}{\partial_t}\int_{\O}G(\rho)\,dx,$$
which implies
$$r_0\int_0^t\int_{\O}\rho_t g(\rho)\,dx\,dt= r_0\int_{\O}G(\rho)\,dx.$$
 Meanwhile, since $\lim_{\zeta\to 0}\mu'(\zeta)=\varepsilon_1>0$, for any $|\zeta|<\epsilon$ and any small number $\epsilon>0$, we have $\mu'(\zeta)\geq \frac{\varepsilon_1}{2}.$
 Thus, we have further estimate on $G(\rho)$ as follows
 \begin{equation*}
 \begin{split}
 G(\rho)=\int_1^\rho\int_1^r\frac{\mu'(\zeta)}{\zeta^2}\,d\zeta\,dr
 &\geq \frac{\varepsilon_1}{2}\int_1^\rho(1-\frac{1}{r})\,dr
 \\&= \frac{\varepsilon_1}{2}(\rho-1-\ln\rho)
 \\&\geq -\frac{\varepsilon_1}{4}(\ln\rho)_{-},
 \end{split}
 \end{equation*}
 for any $\rho\leq \epsilon$.
 Similarly, we can show that
 $$G(\rho)\leq 4\varepsilon_1(\ln\rho)_{+}$$ for any $\rho\leq \epsilon$.
 For given number $\epsilon_0>0$, if $\rho\geq \epsilon_0$, then we have $$0\leq G(\rho)\leq C\int_1^\rho\int_1^r\mu'(\zeta)\,d\zeta\,dr\leq C\mu(\rho)\rho.$$

\noindent {b) \it Quadratic drag term.} 
We use the same argument as in \cite{BDZ} to handle this term.  
The quadratic drag term gives
\begin{equation}
\begin{split}
\label{drag term control}
&r_1\int_0^t\int_{\O}
    \rho|\w-2\kappa\nabla{s}(\rho)|(\w-2\kappa\nabla{s}(\rho))\cdot\w\,dx\,dt
\\&=r_1\int_0^t\int_{\O} \rho
    |\w-2\kappa\nabla{s}(\rho)|^3\,dx\,dt
\\&\quad\quad\quad\quad+r_1\int_0^t\int_{\O} \rho|\w-2\kappa\nabla{s}(\rho)|(\w-2\kappa\nabla{s}(\rho))\cdot(2\kappa\nabla{s}(\rho))\,dx\,dt.
\end{split}
\end{equation} The second drag term of the right--hand side
can be controlled as follows
\begin{equation}
\label{the second term of drag term control}
\begin{split}
&r_1\left|\int_0^t\int_{\O}\rho|\w-2\kappa\nabla{s}(\rho)|(\w-2\kappa\nabla{s}(\rho))
\cdot(2\kappa\nabla{s}(\rho))\,dx\,dt\right|
\\&\leq r_1\int_0^t\int_{\O}\mu(\rho)|\u||\mathbb{D}\u|\,dx\,dt\\
&\leq \frac{1}{2}\int_0^t\int_{\O}\mu(\rho)|\mathbb{D}\u|^2\,dx\,dt
+\frac{r_1^2}{2}\int_0^t\int_{\O}\mu(\rho)|\u|^2\,dx\,dt,
\end{split}
\end{equation}
and $$\|\sqrt{\mu(\rho)}|\u|\|_{L^2(0,T;L^2(\O))}\leq C\|\rho^{\frac{1}{3}}|\u|\|_{L^3(0,T;L^3(\O))}\|\frac{\sqrt{\mu(\rho)}}{\rho^{\frac{1}{3}}}\|_{L^6(0,T;L^6(\O))}.$$
Note that
\begin{equation}
\begin{split}
&\int_0^t\int_{\O}\frac{\mu(\rho)^3}{\rho^2}\,dx\,dt=\int_0^t\int_{0\leq \rho\leq 1}\frac{\mu(\rho)^3}{\rho^2}\,dx\,dt+
\int_0^t\int_{\rho\geq 1}\frac{\mu(\rho)^3}{\rho^2}\,dx\,dt
\\&\leq C\int_0^t\int_{0\leq \rho\leq 1}\mu(\rho)(\mu'(\rho))^2\,dx\,dt+
\int_0^t\int_{\rho\geq 1}\frac{\mu(\rho)^3}{\rho^2}\,dx\,dt
\\&\leq C+\int_0^t\int_{\rho\geq 1}\frac{\mu(\rho)^3}{\rho^2}\,dx\,dt.
\end{split}
\end{equation}
From \eqref{mu estimate}, for any $\rho\geq 1$, we have
$$c'\rho^{\alpha_1} \leq \mu(\rho)\leq c\rho^{\alpha_2},$$
where $2/3<\alpha_1\leq \alpha_2<4.$
This yields to
\begin{equation}
\label{control preasuer}\int_0^t\int_{\rho\geq 1}\frac{\mu(\rho)^3}{\rho^2}\,dx\,dt\leq c\int_0^t\int_{\rho\geq 1}\rho^{3\,\alpha_2-2}\,dx\,dt
\leq c  \int_0^t \int_{\O}\rho^{10}\,dx
\end{equation}
for any time $t>0.$

\noindent {c) \it Cubic drag term.} The non-linear cubic drag term gives
\begin{equation}
\begin{split}
\label{drag term control}
&r_2\int_0^t\int_{\O}
    \frac{\rho}{\mu'(\rho) }|\w-2\kappa\nabla{s}(\rho)|^2(\w-2\kappa\nabla{s}(\rho))\cdot\w\,dx\,dt
\\&=r_2\int_0^t\int_{\O} \frac{\rho}{\mu'(\rho) }
    |\w-2\kappa\nabla{s}(\rho)|^4\,dx\,dt
\\&\quad\quad\quad\quad+r_2\int_0^t\int_{\O} \frac{\rho}{\mu'(\rho) }|\w-2\kappa\nabla{s}(\rho)|^2(\w-2\kappa\nabla{s}(\rho))\cdot(2\kappa\nabla{s}(\rho))\,dx\,dt.
\end{split}
\end{equation}
The novelty now is to show that we control the second drag term of the right--hand side
using the Korteweg-type information on the left-hand side
\begin{equation}
\label{the second term of drag term control}
\begin{split}
&r_2\int_0^t\int_{\O}\frac{\rho}{\mu'(\rho) }|\w-2\kappa\nabla{s}(\rho)|^2(\w-2\kappa\nabla{s}(\rho))\cdot(2\kappa\nabla{s}(\rho))\,dx\,dt
\\&\le r_2 \Bigl(
 \frac{3}{4} \int_0^t \int_{\O}  \frac{\rho}{\mu'(\rho)} |w-2\kappa \nabla {s}(\rho)|^4
 +  \frac{(2\kappa)^4}{4} \int_0^t \int_{\O}  \frac{\rho}{\mu'(\rho)} |\nabla {s}(\rho)|^4 \Bigr).
\end{split}
\end{equation}
Remark that the first term in the right-hand side may be absorbed using the first
term in \eqref{drag term control}. Let us now prove that if $r_1$ small enough, the
second term in the right-hand side may be absorbed by the term coming from
the capillary quantity in the energy. From Lemma
\ref{Lemma on jungel type inequality}, we have $$ \int_0^t\int_{\O}\frac{\rho^2}{\mu^{3}(\rho)}|\nabla Z(\rho)|^4\,dx\,dt=\int_0^t\int_{\O}\frac{1}{\mu(\rho)\rho^2}|\nabla\mu(\rho)|^4\,dx\,dt.
$$
It remains to check that
$$ \int_0^t \int_{\O}  \frac{\rho}{\mu'(\rho)} |\nabla {s}(\rho)|^4=
\int_0^t\int_{\O}\frac{1}{\mu'(\rho)\rho^3}|\nabla\mu(\rho)|^4\,dx\,dt\leq 
  C\int_0^t\int_{\O}\frac{1}{\mu(\rho)\rho^2}|\nabla\mu(\rho)|^4\,dx\,dt.$$
This concludes assuming $r_1$ small enough compared to $r$.

\subsubsection{The $\kappa$-entropy estimate.} Using the previous calculations,
assuming $r_2$ small enough compared to $r$, and denoting
$$ E[\rho,u+2\kappa \nabla \mathbf{s(\rho)}, \nabla  \mathbf{s(\rho)}]
= \int_\O\rho\left(\frac{|\u+2\kappa\nabla{s}(\rho)|^2}{2}
+ (1-\kappa)\kappa\frac{|\nabla{s}(\rho)|^2}{2}\right)+
\frac{\rho^{\gamma}}{\gamma-1}+\frac{\delta\rho^{10}}{9}+G(\rho),
$$
we get the following $\kappa$-entropy estimate
\begin{equation}
\label{entropy obtained}
\begin{split}
& E[\rho,u+2\kappa \nabla \mathbf{s(\rho)}, \nabla  \mathbf{s(\rho)}](t)
 +r_0\int_0^t\int_{\O}|\u|^2\,dx\,dt
\\&+\frac{r}{2}\int_{\O}|\nabla\int_0^{\rho}\sqrt{K(s)}\,ds|^2 \,dx
+2(1-\kappa)\int_0^t\int_{\O}\mu(\rho)|\mathbb{D}\u|^2\,dx\,dt+
20\kappa\int_0^t\int_{\O}\mu'(\rho)\rho^8|\nabla\rho|^2\,dx\,dt
\\
&+2(1-\kappa)\int_0^t\int_{\O}(\mu'(\rho)\rho-\mu(\rho))(\Dv\u)^2\,dx\,dt+2\kappa\int_0^t\int_{\O}\mu(\rho)|A(\u+2\kappa\nabla{s}(\rho))|^2\,dx\,dt
\\&+2\kappa\int_0^t\int_{\O}\frac{\mu'(\rho)p'(\rho)}{\rho}|\nabla\rho|^2\,dx\,dt
+ r_1 \int_0^t \int_{\O} \rho|\u|^3\, dx\,dt
     + \frac{r_2}{4} \int_0^t \int_{\O}  \frac{\rho}{\mu'(\rho)} |\u|^4 \, dx\, dt\\
&
+  \kappa r\int_0^t\int_{\O}\mu(\rho)|2 \nabla^2{s}(\rho)|^2\,dx\,dt
+ \frac{1}{2}\kappa  r\int_0^t\int_{\O}\lambda(\rho)|2\Delta{s}(\rho)|^2\,dx\,dt
\\&\leq \int_{\O}\left(\rho_0\left(\frac{|\w_0|^2}{2}+(1-\kappa)\kappa\frac{|\v_0|^2}{2}\right)+\frac{\rho_0^{\gamma}}{\gamma-1}+
\frac{\delta\rho_0^{10}}{9}+\frac{r}{2}|\nabla\int_0^{\rho_0} \sqrt{K(s)} \, ds|^2+G(\rho_0)\right)\,dx\\
& + C \frac{r_1}{\delta}  
     \int_\Omega E[\rho,u+2\kappa \nabla \mathbf{s(\rho)}, \nabla  \mathbf{s(\rho)}] dx \, dt .
\end{split}
\end{equation}
It suffices now to remark that
\begin{equation} \nonumber 
\begin{split}
& \int_0^t\int_\O \mu(\rho) | \mathbb{D}\u|^2 
  + \int_0^t \int_\O (\mu'(\rho)\rho - \rho) |{\rm div} \u|^2 \\
& = \int_0^t\int_\O \mu(\rho) | \mathbb{D}\u -\frac{1}{3} {\rm div} \u \, {\rm Id}|^2 \, dx dt 
  + \int_0^t \int_\O (\mu'(\rho)\rho -  \mu(\rho) + \frac{1}{3}\mu(\rho)) |{\rm div} \u|^2 .
\end{split}
\end{equation}
Note that $\alpha_1>2/3$,   there exists $\varepsilon>0$ such that 
$$\mu'(\rho)\rho - \frac{2}{3}\mu(\rho)  > \varepsilon \mu(\rho).$$
Such information and the control of $\sqrt{\mu(\rho)} |A(u)+2\kappa\nabla {\mathbf s}(\rho)|$
 in $L^2(0,T;L^2(\O))$ allow us, using the Gr\"onwall Lemma and the constraints on the parameters, to get the uniform estimates \eqref{priori estimates}--\eqref{J inequality for sequence}.
 
 \bigskip

Now we can show   \eqref{priori mu}.  First, we have 
$$\nabla \mu(\rho) = \frac{\nabla \mu(\rho)}{\sqrt \rho} \sqrt \rho
       \in  L^\infty(0,T;L^1(\Omega)),$$
due to the mass conservation and the uniform control on $\nabla\mu(\rho)/\sqrt\rho$
given in \eqref{priori estimates}.   Let us now write the equation satisfied by $\mu(\rho)$ namely
$$\partial_t\mu(\rho) + {\rm div}(\mu(\rho) \u) + \frac{ \lambda(\rho)}{2} {\rm div} \u = 0.$$
Recalling that $\lambda(\rho) = 2( \mu'(\rho)\rho - \mu(\rho))$ and the hypothesis on $\mu(\rho)$,
we get
$$\frac{d}{dt} \int_\Omega \mu(\rho) \le  
   C \, \bigl(\int_\Omega |\lambda(\rho)||{\rm div} \u|^2 + \int_\Omega \mu(\rho)\bigr),$$
and therefore 
$$\mu(\rho) \in L^\infty(0,T;L^1(\Omega)),$$
if $\mu(\rho_0) \in L^1(\Omega)$ due to the fact that $\sqrt{|\lambda(\rho)|}{\rm div} \u
\in L^2(0,T;L^2(\Omega)).$
Now, we observe that  $\mu(\rho)/\sqrt{\rho}$ is smaller than $1$ for $\rho\leq 1$ because
$\alpha_1 > 2/3$, and smaller than $\mu(\rho)$ for $\rho_n>1$, then $$\frac{\mu(\rho)}{\sqrt{\rho}} \in L^\infty(L^1).$$
Meanwhile, thanks to  \eqref{mu estimate}, we have 
$$
|\nabla( \mu(\rho)/\sqrt{\rho})|\leq \left|\frac{\nabla\mu(\rho)}{\sqrt{\rho}}\right|+\frac{\mu(\rho)}{2\rho\sqrt{\rho}}|\nabla\rho|\leq \left(1+\frac{1}{\alpha_1}\right)\left|\frac{\nabla\mu(\rho)}{\sqrt{\rho}}\right|.
$$
By \eqref{priori estimates},
 $\nabla(\mu(\rho)/\sqrt{\rho})$ is bounded in $L^\infty(0,T;L^2(\Omega))$ and finally 
$\mu(\rho)/\sqrt{\rho}$ is bounded in $L^\infty(0,T;(L^6(\Omega))$.
Thus, we have that 
$$\mu(\rho) \u = \frac{\mu(\rho)}{\sqrt\rho} \sqrt \rho\u,$$
is uniformly bounded in $ L^\infty(0,T;L^{3/2}(\O)).$
Let us come back to the equation satisfied by $\mu(\rho)$
which reads
$$\partial_t \mu(\rho) + {\rm div}(\mu(\rho) \u) + \frac{\lambda(\rho)}{2}{\rm div} \u= 0.$$
 Recalling that
$\lambda(\rho) {\rm div} \u \in L^\infty(0,T;L^1(\O))$, then we get
the conclusion on $\partial_t \mu(\rho)$. 
Let us now to prove that 
$$Z(\rho)= \displaystyle  \int_0^{\rho_n} \frac{\sqrt{\mu(s)} \mu'(s)}{s} ds \in L^{1+}((0,T)\times \O) 
     \hbox{ uniformly.} $$
Note first that
$$0 \le \frac{\sqrt{\mu(s)} \mu'(s)}{s} \le \alpha_2 \frac{\mu(s)^{3/2}}{s^2}
    \le c_2 \alpha_2(s^{3\alpha_1/2-2} 1_{s\le 1} +  \frac{\mu(s)^{3/2-}}{s^{2-}} 1_{s\ge 1}).
$$
There  exists $   \varepsilon>0 \hbox{ such that } \alpha_1 > 2/3+ \varepsilon,$
 thus
$$
0 \le \frac{\sqrt{\mu(s)} \mu'(s)}{s} \le 
c_2 \alpha_2 ( s^{\varepsilon -1}1_{s\le 1} + \frac{\mu(s)^{3/2-}}{s^{2-}} 1_{s\ge 1}).
$$
Note that $\mu'(s) > 0$ for $s>0$ and the definition of $Z(\rho)$, we get
$$0\le Z(\rho) \le C (\rho^\varepsilon  + \mu(\rho)^{3/2-})$$
with $C$ independent of $n$.
Thus $Z(\rho) \in L^{\infty}(0,T; L^{1+}(\O))$ uniformly with respect to $n$. 
Bound on $Z_1(\rho)$ follows the similar lines.

\subsection{Compactness Lemmas.} In this subsection, we provide  general compactness lemmas which will be used several times in this paper.

\bigskip

\noindent {\it Some uniform compactness.}  

\begin{Lemma}\label{compactuniforme} Assume we have a sequence $\{\rho_n\}_{n\in \mathbb N}$ satisfying  the estimates in Theorem \ref{main result 1}, uniformly with respect to $n$. Then, there exists a function $\rho \in L^\infty(0,T;L^\gamma(\O))$ such that,
 up to a subsequence,
$$ \mu(\rho_n) \to \mu(\rho) \hbox{ in } {\mathcal C}([0,T]; L^{3/2}(\O) \hbox{ weak}),$$
and
$$ \rho_n \to \rho \hbox{ a.e. in } (0,T)\times \O.$$
Moreover 
$$ \rho_n \to \rho \hbox{ in } L^{(4\gamma/3)^+}((0,T)\times \Omega),$$
\qquad 
$$\sqrt{\frac{P'(\rho_n)\rho_n}{\mu'(\rho_n)}} \nabla 
      \displaystyle \Bigl(\int_0^{\rho_n} \sqrt{\frac{P'(s)\mu'(s)}{s}}\, ds\Bigr)
\rightharpoonup \sqrt{\frac{P'(\rho)\rho}{\mu'(\rho)}} \nabla 
      \displaystyle \Bigl(\int_0^{\rho} \sqrt{\frac{P'(s)\mu'(s)}{s}}\, ds\Bigr)
   \hbox{ in } L^{1}((0,T)\times \O)
   $$
   and 
$$\sqrt{\frac{P'(\rho_n)\rho_n}{\mu'(\rho_n)}} \nabla 
      \displaystyle \Bigl(\int_0^{\rho_n} \sqrt{\frac{P'(s)\mu'(s)}{s}}\, ds\Bigr)
   \in L^{1+}((0,T)\times\Omega).$$
   If $\delta_n>0$ is such that $\delta_n\to \delta\geq 0$, then $$\delta_n\rho_n^{10}\to \delta\rho^{10}\quad\text{ in } L^{\frac{4}{3}}((0,T)\times\O).$$
\end{Lemma}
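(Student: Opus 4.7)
I would proceed in three stages: compactness of the sequence $\mu(\rho_n)$, almost-everywhere convergence of $\rho_n$, and then strong/weak convergence of the nonlinear quantities built on $\rho_n$. For the first stage, the estimates \eqref{priori mu} give a uniform bound on $\mu(\rho_n)$ in $L^\infty(0,T;W^{1,1}(\Omega))$ together with $\partial_t \mu(\rho_n)$ in $L^\infty(0,T;W^{-1,1}(\Omega))$. An Aubin--Lions--Simon argument with the compact embedding $W^{1,1}(\Omega)\hookrightarrow\hookrightarrow L^q(\Omega)$ for $q<3/2$ yields, up to a subsequence, $\mu(\rho_n)\to\mu(\rho)$ strongly in $\mathcal{C}([0,T];L^q(\Omega))$ for every $q<3/2$. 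To promote this to $\mathcal{C}([0,T];L^{3/2}\text{-weak})$, I would combine it with the uniform $L^\infty(L^{3/2})$ bound obtained by writing $\mu(\rho_n)=(\mu(\rho_n)/\sqrt{\rho_n})\sqrt{\rho_n}$ and applying H\"older with $\mu(\rho_n)/\sqrt{\rho_n}\in L^\infty(L^6)$ from \eqref{priori mu} against $\sqrt{\rho_n}\in L^\infty(L^{2\gamma})$ (recall $\gamma>1$), and then invoking the standard weak-$L^p$ continuity criterion.

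Since $\mu\in C^2(\mathbb{R}_+^*)$ with $\mu'>0$ on $(0,\infty)$ by \eqref{mu estimate} and $\mu(0)=0$, the map $\mu\colon[0,\infty)\to[0,\infty)$ is a continuous strictly increasing bijection, so strong $L^q$ convergence of $\mu(\rho_n)$ transfers to a.e.\ convergence of $\rho_n\to\rho$. To upgrade to strong $L^{(4\gamma/3)^+}$ convergence, I would combine this a.e.\ convergence with a uniform higher-integrability estimate: the bounds $\|\sqrt{\mu'(\rho_n)\rho_n^{\gamma-2}}\nabla\rho_n\|_{L^2_{t,x}}\le C$ and $\delta\|\sqrt{\mu'(\rho_n)\rho_n^{8}}\nabla\rho_n\|_{L^2_{t,x}}\le C$ from \eqref{priori estimates}--\eqref{priorie estimate2}, together with $\rho_n\in L^\infty(L^\gamma)$, yield via Gagliardo--Nirenberg interpolation in three space dimensions a uniform bound of $\rho_n$ in $L^{p}((0,T)\times\Omega)$ for some $p>4\gamma/3$, and Vitali's theorem then concludes.

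For the nonlinear pressure-gradient term I would use the pointwise identity
\begin{equation*}
\sqrt{\tfrac{P'(\rho)\rho}{\mu'(\rho)}}\,\nabla\!\!\int_0^{\rho}\!\!\sqrt{\tfrac{P'(s)\mu'(s)}{s}}\,ds
=\sqrt{\tfrac{P'(\rho)\rho}{\mu'(\rho)}}\,\sqrt{\tfrac{P'(\rho)\mu'(\rho)}{\rho}}\,\nabla\rho
=P'(\rho)\nabla\rho=\nabla P(\rho),
\end{equation*}
so this object is simply $\nabla(a\rho_n^\gamma)$; strong $L^{(4\gamma/3)^+}$ convergence of $\rho_n$ produces convergence of $\rho_n^\gamma$ in $L^{(4/3)^+}$ and hence distributional convergence of the gradient, while the uniform $L^{1+}$ bound (obtained by H\"older, splitting into $\sqrt{P'(\rho_n)\rho_n/\mu'(\rho_n)}\in L^{2+}$ thanks to $\mu'(s)\gtrsim s^{\alpha_1-1}$ and the higher-integrability bound on $\rho_n$, against the $L^2$ gradient factor) together with a.e.\ convergence and the Dunford--Pettis criterion upgrades this to the announced weak $L^1$ convergence. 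The same Vitali argument, relying on a uniform $L^{(4/3)^+}$ bound on $\delta_n\rho_n^{10}$ produced from the gradient estimate on $\sqrt{\mu'(\rho_n)\rho_n^8}\nabla\rho_n$ and Gagliardo--Nirenberg, handles the final assertion. \textbf{The main obstacle} lies in the quantitative interpolation step: producing a uniform $L^p$ bound on $\rho_n$ with $p$ \emph{strictly} larger than $4\gamma/3$, valid for every $\gamma>1$ and every admissible pair $\alpha_1,\alpha_2\in(2/3,4)$, requires a careful balance between the BD-type gradient information, the augmented pressure $\delta\rho^{10}$, and the basic $L^\infty(L^\gamma)$ energy control, since none of these alone suffices in the full range.
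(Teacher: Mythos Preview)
Your plan is essentially correct and matches the paper's proof in structure: Aubin--Lions on $\mu(\rho_n)$ using the $W^{1,1}$ and $\partial_t$ bounds from \eqref{priori mu}, then invertibility of $\mu$ for a.e.\ convergence of $\rho_n$, then a higher-integrability estimate combined with Vitali to get strong $L^{(4\gamma/3)^+}$ convergence.

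Two points of comparison are worth noting. First, your identification of the pressure quantity as literally $\nabla P(\rho_n)$ via the pointwise identity
\[
\sqrt{\tfrac{P'(\rho)\rho}{\mu'(\rho)}}\,\sqrt{\tfrac{P'(\rho)\mu'(\rho)}{\rho}}=P'(\rho)
\]
is cleaner than the paper's route, which instead treats it as a product and passes to the limit by proving separately that $\sqrt{P'(\rho_n)\rho_n/\mu'(\rho_n)}\to\sqrt{P'(\rho)\rho/\mu'(\rho)}$ strongly in $L^2$ and that the gradient factor converges weakly in $L^2$. Both arguments arrive at the same conclusion; yours is shorter, while the paper's factorised form is what is later reused in the renormalisation estimates. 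One caveat in your write-up: you do \emph{not} have a.e.\ convergence of $\nabla P(\rho_n)$, so the phrase ``a.e.\ convergence and the Dunford--Pettis criterion'' is misplaced. What you actually need (and have) is the uniform $L^{1+}$ bound, which gives weak relative compactness in $L^1$, together with the distributional identification of the limit.

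Second, for the crucial step---the uniform $L^{(4\gamma/3)+\varepsilon}$ bound on $\rho_n$---you invoke Gagliardo--Nirenberg as a black box and correctly flag this as the main obstacle. The paper makes this quantitative: it bounds $\int\rho_n^{4\gamma/3+\varepsilon}$ on $\{\rho_n\ge 1\}$ by $\int_0^T\|\rho_n^\gamma\|_{L^1}^{2/3}\|P'(\rho_n)\mu(\rho_n)\|_{L^3}$, then controls $\|P'(\rho_n)\mu(\rho_n)\|_{L^3}=\|\sqrt{P'(\rho_n)\mu(\rho_n)}\|_{L^6}^2$ via Poincar\'e--Wirtinger, using that $\nabla\sqrt{P'(\rho_n)\mu(\rho_n)}$ is dominated by $\sqrt{P'(\rho_n)\mu'(\rho_n)/\rho_n}\,\nabla\rho_n\in L^2_{t,x}$ from \eqref{priori estimates}. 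The $L^\infty(L^{10})$ bound on $\rho_n$ is used to control the mean term and also, together with the gradient estimate on $\sqrt{\mu'(\rho_n)\rho_n^8}\,\nabla\rho_n$, for the $\delta_n\rho_n^{10}$ claim. Your plan points in the same direction; the explicit Poincar\'e--Wirtinger computation is what closes it in the full range $\gamma>1$, $2/3<\alpha_1<\alpha_2<4$.
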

\bigskip

\noindent {\bf Proof.} From the estimate on $\mu(\rho_n)$ and Aubin-Lions lemma,  up to a subsequence, we have 
$$\mu(\rho_n)  \to \mu(\rho) \hbox{ in } {\mathcal C}([0,T]; L^{3/2}(\O) \hbox{ weak})$$
and therefore using that $\mu'(s)>0$ on $(0,+\infty)$ with $\mu(0)=0$, we get the conclusion 
on $\rho_n$.    Let us now recall that 
\begin{equation} \label{muineq}
\frac{\alpha_1}{\rho_n} \le  \frac{\mu'(\rho_n)}{\mu(\rho)} \le \frac{\alpha_2}{\rho_n} 
\end{equation} 
and therefore
$$ c_1 \rho_n^{\alpha_2} \le \mu(\rho_n) \le c_2 \rho_n^{\alpha_1}
     \qquad  \hbox{  for } \rho_n \le 1,$$
and 
$$c_1 \rho_n^{\alpha_1} \le \mu(\rho_n) \le c_2 \rho_n^{\alpha_2}
    \qquad \hbox{  for } \rho\ge 1.$$
with $c_1$ and $c_2$ independent on $n$.
Note that 
\begin{equation}\label{estimpressure}
\sqrt{\frac{p'(\rho_n)\mu'(\rho_n)}{\rho_n}}\nabla \rho_n \in L^\infty(0,T;L^2(\O))
   \hbox{ uniformly.} 
\end{equation}
Let us prove that there exists $\varepsilon$ such that
$$I_0= \displaystyle \int_0^T\int_\Omega \rho_n^{\frac{4\gamma}{3}+\varepsilon} <  C$$
with $C$ independent on $n$ and the parameters. We first remark that it suffices
to look at it when $\rho_n \ge 1$ and to remark there exists $\varepsilon$ such that
$\varepsilon \le (\gamma-1)/3.$ Let us take such parameter then
$$ \int_0^T\int_\Omega \rho_n^{\frac{4\gamma}{3}+\varepsilon} 1_{\rho \ge 1}
   \le \int_0^T\int_\Omega \rho_n^{\frac{2\gamma}{3} + \gamma - \frac{1}{3}} 1_{\rho \ge 1}
   \le \int_0^T \int_\O \rho_n^{\frac{2\gamma}{3} + \gamma + \alpha_1 -1} 1_{\rho \ge 1} 
$$
recalling that $\alpha_1 >2/3.$
Following  \cite{LiXi}, it remains to prove that
$$\displaystyle I_1=   \int_0^T\int_\O  \bigl[\rho_n^{[5\gamma + 3(\alpha_1-1)]/3}  \, 1_{\rho \ge 1} \bigr] <+\infty$$ uniformly. Denoting 
 $$I_2 = \int_0^T\int_\O  \bigl[\rho^{[5\gamma + 3(\alpha_2-1)]/3}  \, 1_{\rho \le 1} \bigr]$$
and using  the bounds on $\mu(\rho_n)$ in terms of power functions in $\rho$, which are different if $\rho_n \ge 1$ or $\rho_n\le 1$, we can write:
 $$ I_1 \le  I_1 + I_2 \le C_a \int_0^T \int_\O \rho_n^{2\gamma/3} P'(\rho_n) \,\mu(\rho_n)
      \le  C_a \int_0^T \|\rho_n^\gamma\|^{2/3}_{L^1(\O)}\|P'(\rho_n)\mu(\rho_n)\|_{L^3(\O)}$$
 where $C$ does not depend on $n$.
 Using the Poincar\'e-Wirtinger inequality, one obtains that
 \begin{equation*}
 \begin{split}\|P'(\rho_n)\mu(\rho_n)\|_{L^3(\O)} &= \|\sqrt{P'(\rho_n) \mu(\rho_n)}\|_{L^6(\O)}^2
     \\&\le  \|\sqrt{P'(\rho_n)\mu(\rho_n)}\|_{L^1(\O)} 
       + \|\nabla \bigl[\sqrt{P'(\rho_n)\mu(\rho_n)}\bigr]\|_{L^2(\O)}^2.
 \end{split}
 \end{equation*}
 Let us now check that the two terms are uniformly bounded in time.
 First we caculate
 $$\nabla \bigl[\sqrt{P'(\rho_n)\mu(\rho_n)}\bigr]
    = \frac{P''(\rho_n) \mu(\rho_n) 
       + P'(\rho_n)\mu'(\rho_n)}{\sqrt{P'(\rho_n)\mu(\rho_n)} }\nabla \rho_n$$
 and using \eqref{muineq}, we can  check that
 $$ \frac{P''(\rho_n) \mu(\rho_n) 
       + P'(\rho_n)\mu'(\rho_n)}{\sqrt{P'(\rho_n)\mu(\rho_n)} }
          \le  \sqrt{\frac{P'(\rho_n)\mu'(\rho_n)}{\rho_n}}.$$
 Therefore, using \eqref{estimpressure},  uniformly with respect to $n$, we get 
 $$ \sup_{t\in [0,T]} \|\nabla \bigl[\sqrt{P'(\rho_n)\mu(\rho_n)}\bigr]\|_{L^2(\O)}^2 < + \infty.$$
 Let us now check that uniformly with respect to $n$
 \begin{equation}\label{AAAestimm}
 \sup_{t\in [0,T]} \|\sqrt{P'(\rho_n)\mu(\rho_n)}\|_{L^1(\O)} < + \infty.
 \end{equation}
 Using the bounds on $\mu(\rho_n)$, we have
 $$\int_\O \sqrt{P'(\rho_n)\mu(\rho_n)}
      \le  C \int_\O \Bigl[\rho_n^{(\gamma-1+\alpha_1)/2} 1_{\rho_n \le 1}
           +  \rho_n^{(\gamma-1+\alpha_2)/2} 1_{\rho_n \ge 1} \Bigr]$$
with $C$ independent on $n$.
 Recalling that $\alpha_1 \ge 2/3$ and $\alpha_2 < 4$, we can check that            
  $$\int_\O \sqrt{P'(\rho_n)\mu(\rho_n)}
       \le  C \int_\O \Bigl[\rho_n^{\gamma/3}   +  \rho_n^{\frac{\gamma}{2}}\rho_n^{\frac{3}{2}}  \Bigr],$$
  and therefore using that $\rho_n^\gamma \in L^\infty(0,T;L^1(\O))$ and $\rho_n\in L^{\infty}(0,T;L^{10}(\O))$, we get
\eqref{AAAestimm}. This ends the proof of the convergence of $\rho_n$ to
$\rho$ in $L^{(4\gamma/3)^+}((0,T)\times \Omega$.

\medskip

\noindent Let us now focus on the convergence of 
\begin{equation}
\label{weak convergence of product}
\sqrt{\frac{P'(\rho_n)\rho_n}{\mu'(\rho_n)}} \nabla 
      \displaystyle \Bigl(\int_0^{\rho_n} \sqrt{\frac{P'(s)\mu'(s)}{s}}\, ds\Bigr).
      \end{equation}
First let us recall that 
$$\nabla  \displaystyle \Bigl(\int_0^{\rho_n} \sqrt{\frac{P'(s)\mu'(s)}{s}}\, ds\Bigr)
   \in L^\infty(0,T;L^2(\Omega)) \hbox{ uniformly}.$$
 Let us now prove that 
 \begin{equation}\label{estimm}
 \sqrt{\frac{P'(\rho_n)\rho_n}{\mu'(\rho_n)}}
    \in L^{2+}((0,T)\times \Omega).
\end{equation}
 Recall first that $\alpha_1 >\frac{2}{3}$, we just have to 
 consider $\rho_n \ge 1$. We write   
 $$\frac{P'(\rho_n)\rho_n}{\mu'(\rho_n)} 1_{\rho_n\ge 1}
     \le C \rho_n^{\gamma - \alpha_1 +1} 1_{\rho_n\ge 1}
     \le C \rho_n^{\gamma +1/3} 1_{\rho_n\ge 1}
     \le C \rho_n^{\frac{4\gamma}{3}} 1_{\rho_n \ge 1}.$$
  We can use the fact that  $\rho_n^{(4\gamma/3)^+} \in L^1((0,T)\times \Omega)$
  uniformly to conclude on \eqref{estimm}. Thanks to 
  $$\sqrt{\frac{P'(\rho_n)\rho_n}{\mu'(\rho_n)}}
     \to \sqrt{\frac{P'(\rho)\rho}{\mu'(\rho)}} \hbox{ in } L^2((0,T)\times \Omega)
  $$
  and
  $$ \nabla 
      \displaystyle \Bigl(\int_0^{\rho_n} \sqrt{\frac{P'(s)\mu'(s)}{s}}\, ds\Bigr)
       \to \nabla 
      \displaystyle \Bigl(\int_0^{\rho} \sqrt{\frac{P'(s)\mu'(s)}{s}}\, ds\Bigr)
      \hbox{ weakly in } L^2((0,T)\times \Omega),
  $$  
 we have the weak convergence of \eqref{weak convergence of product} in $ L^{1}((0,T)\times \O)$.

\bigskip
We now investigate limits on $\u$ independent of the parameters. We need to differentiate the case with hyper-viscosity $\eps_2>0$, from the case without. In the case with  hyper-viscosity, the estimate depends on $\eps_1$ because of the drag force $r_1$, while the estimate in the case $\eps_2=0$ is independent of all the other parameters. This is why we will consider the limit $\eps_2$ converges to 0 first.

\begin{Lemma}\label{lem u} Assume that $\eps_1>0$ is fixed. Then, there exists a constant $C>0$ depending on $\eps_1$ and $C_{in}$, but independent of all the other parameters (as long as they are bounded), such that for any initial values $(\rho_0, \sqrt{\rho_0}u_0)$ verifying (\ref{Initial conditions}) for  $C_{in}>0$
we have 
\begin{eqnarray*}
&&\|\partial_t(\rho \u)\|_{L^{1+}(0,T;W^{-s,2}(\Omega))}\leq C,\\
&&\|\nabla(\rho \u)\|_{L^2(0,T;L^1(\Omega))}\leq C.
\end{eqnarray*}

Assume  now that $\eps_2=0$.
Let $\Phi:\R^+\to \R$ be a smooth function, positive for $\rho>0$, such that 
\begin{eqnarray*}
&&\Phi(\rho)+|\Phi'(\rho)|\leq C e^{-\frac{1}{\rho}}, \qquad \mathrm{for} \ \rho\leq 1,\\
&&\Phi(\rho)+|\Phi'(\rho)|\leq C e^{-\rho}, \qquad \mathrm{for} \ \rho\geq 2.
\end{eqnarray*}
Assume that the initial values $(\rho_0, \sqrt{\rho_0}u_0)$ verify (\ref{Initial conditions}) for a fixed $C_{in}>0$.
Then, there exists a constant $C>0$ independent of $\eps_1, r_0, r_1, r_2, \delta$ (as long as they are bounded), such that 
\begin{eqnarray*}
&&\|\partial_t\left[\Phi(\rho) \u\right]\|_{L^{1+}(0,T;W^{-2,1}(\Omega))}\leq C,\\
&&\|\nabla\left[\Phi(\rho) \u\right]\|_{L^2(0,T;L^1(\Omega))}\leq C.
\end{eqnarray*}
\end{Lemma}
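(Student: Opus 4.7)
The plan is to isolate $\partial_t(\rho\u)$ directly from the momentum equation \eqref{approximation of the momentum equation}, expressing it as a sum of spatial derivatives of quantities already controlled by the $\kappa$-entropy estimates \eqref{priori estimates}--\eqref{J inequality for sequence}, plus zeroth-order drag and pressure contributions. The spatial gradients $\nabla(\rho\u)$ and $\nabla[\Phi(\rho)\u]$ are handled by the product rule, using those same bounds together with the compatibility identity \eqref{Tmu} which relates $\nabla(\mu(\rho)\u)$ to $\sqrt{\mu(\rho)}\mathbb{T}_\mu$ and $\sqrt{\rho}\u\otimes\sqrt{\rho}\nabla s(\rho)$.

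For the first part, with $\varepsilon_2>0$ fixed, each term on the right-hand side of the momentum equation falls into a suitable negative Sobolev space: the convection $\rho\u\otimes\u=\sqrt{\rho}\u\otimes\sqrt{\rho}\u$ lies in $L^\infty(0,T;L^1(\Omega))$; both pressures $P(\rho)$ and $P_\delta(\rho)$ lie in $L^\infty(L^1)$; the viscous stress $\sqrt{\mu(\rho)}\mathbb{S}_\mu + (\lambda/2\mu)\mathrm{Tr}(\sqrt{\mu(\rho)}\mathbb{S}_\mu)\mathrm{Id}$ and its capillary analogue (weighted by $\sqrt{r}$) lie in $L^2(L^2)$ via \eqref{priori estimates}--\eqref{priorie estimate2}; the drag contributions lie in suitable spacetime Lebesgue spaces using $\mu'\ge\varepsilon_1$ together with the $r_0,r_1,r_2$ energy bounds; and the hyperviscous piece lies in $L^2(W^{-s,2})$ since $\sqrt{\varepsilon_2}\Delta^s\w\in L^2(L^2)$ and $\sqrt[4]{\varepsilon_2}\nabla\w\in L^4(L^4)$, giving the stated $L^{1+}(0,T;W^{-s,2})$ bound. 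For $\nabla(\rho\u)=\nabla\rho\otimes\u+\rho\nabla\u$, one uses $\sqrt{\rho}\nabla s(\rho)\in L^\infty(L^2)$ (combined with $\mu'\ge\varepsilon_1$ to recover $\nabla\rho$) and the compatibility formula \eqref{Tmu} to reduce $\rho\nabla\u$ to quantities already estimated, yielding the $L^2(L^1)$ bound.

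For the second part, with $\varepsilon_2=0$ and constants uniform in the remaining parameters, the strategy is to write $\Phi(\rho)\u = (\Phi(\rho)/\rho)\cdot(\rho\u)$ and differentiate in time and space using the continuity and momentum equations. Each resulting term becomes a product of a factor involving $\Phi$ or $\Phi'$ with one of the factors already estimated in the first part. The exponential decay of $\Phi$ as $\rho\to 0$ absorbs any polynomial blow-up of $1/\mu'(\rho)$ or $\rho/\mu'(\rho)$, thereby eliminating the dependence on $\varepsilon_1$, and the decay of $\Phi$ as $\rho\to\infty$ absorbs any polynomial growth in $\rho$, eliminating the dependence on $\delta,r_1,r_2$. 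After these cancellations the remaining factors are uniformly bounded in the $\kappa$-entropy norms alone.

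The main obstacle is the careful bookkeeping required to achieve uniformity: for each term in the momentum equation one must verify that, after multiplication by $\Phi(\rho)/\rho$ (respectively $\Phi'(\rho)/\rho$ for the gradient piece), the resulting function of $\rho$ is bounded \emph{independently} of $\varepsilon_1,r_0,r_1,r_2,\delta$. The most delicate contribution is the capillary term $r\rho\nabla(\sqrt{K(\rho)}\Delta\int_0^\rho\sqrt{K(s)}\,ds)$: using the rewriting \eqref{BCNV relationship} as $\mathrm{div}(\mu(\rho)\nabla^2(2s(\rho))) + \nabla(\lambda(\rho)\Delta(2s(\rho)))$, two integrations by parts transfer the three derivatives onto $\Phi(\rho)\u/\rho$, after which \eqref{J inequality for sequence} controls the remaining factors while $\Phi$ provides the weight that tames both the vacuum and the high-density limits.
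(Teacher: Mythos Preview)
Your proposal is correct and follows essentially the same approach as the paper: read the time-derivative bounds directly off the momentum (and, in Case~2, continuity) equation using the a~priori estimates, and handle the spatial gradients by the product rule together with the compatibility identity and the $\varepsilon_1$-dependent (resp.\ $\Phi$-weighted) control on $\nabla\rho/\sqrt{\rho}$. The paper's own proof is considerably terser---Case~2 is dispatched in one line (``as for the renormalization'')---whereas you spell out explicitly the role of the exponential decay of $\Phi$ in absorbing the $1/\mu'(\rho)$ and polynomial-in-$\rho$ factors, and the capillary bookkeeping; this extra detail is helpful but not a different argument.
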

\begin{proof}
We split the proof into the two cases.
\vskip0.3cm \noindent {\bf Case 1:}  Assume that $\eps_1>0$. From the equation on $\rho u$ and the 
{\it a priori } estimates, we find directly that 
$$
\|\partial_t (\rho \u)\|_{L^{1+}(0,T;W^{-s,2}(\Omega))}\leq C+ r_1^{1/4} \frac{\|\rho\|^{1/4}_{L^1((0,T)\times\Omega)}}{\|\mu'(\rho)\|_{L^\infty((0,T)\times\Omega)}}\left(r_1\int_0^T\int_\Omega \rho |\u|^4\,dx\,dt \right)^{3/4}\leq C(1+1/\eps_1).
$$
We have $\mu(\rho)\geq \eps_1 \rho$, and from (\ref{priori estimates}), we have the {\it a priori} estimate 
$$
\|\nabla \sqrt{\rho}\|^2_{L^\infty(0,T;L^2(\Omega))}\leq \frac{C}{\eps_1}.
$$
Hence 
\begin{eqnarray*}
\|\nabla(\rho \u)\|_{L^2(0,T;L^1(\Omega))} 
&& \leq
\left\|\frac{\rho}{\sqrt{\mu}(\rho)}\right\|_{L^\infty(0,T;L^2(\Omega))} 
 \left\|\sqrt{\mu}(\rho)\nabla u\right\|_{L^2(0,T;L^2(\Omega)))} \\
 && \> +2\|\nabla \sqrt{\rho}\|_{L^\infty(0,T;L^2(\Omega))} \|\sqrt{\rho} \u\|_{L^\infty(0,T;L^2(\Omega))}\\
&&\> \leq C.
\end{eqnarray*}

\vskip0.3cm \noindent {\bf Case 2}: Assume now that $\eps_2=0$. Multiplying the equation on $(\rho u)$ by $\Phi(\rho)/\rho$, we get, as for the renormalization, that 
$$
\|\partial_t\left[\Phi(\rho)\u\right]\|_{L^{1+}(0,T;W^{-2,1}(\Omega))}\leq C.
$$
Note that 
\begin{eqnarray*}
&& \|\nabla\left[\Phi(\rho) \u\right]\|_{L^2(0,T;L^1(\Omega))}\leq
     \left\|\frac{\Phi(\rho)}{\sqrt{\mu}(\rho)}\right\|_{L^\infty}  \left\|\sqrt{\mu}(\rho)\nabla \u\right\|_{L^2(L^2)}\\
&&\qquad\qquad +2\| \frac{\Phi'(\rho)}{\mu'(\rho)}\|_{L^\infty((0,T)\times\Omega)} \|\mu'(\rho)\nabla \sqrt{\rho}\|_{L^\infty(0,T;L^2(\Omega))} \|\sqrt{\rho} \u\|_{L^\infty(0,T;L^2(\Omega))}\\
&&\qquad\qquad \leq C.
\end{eqnarray*}
\end{proof}

\begin{Lemma}\label{Compactnesstool1}
Assume either that $\eps_{2,n}=0$, or $\eps_{1,n}=\eps_1>0$. Let $(\rho_n,\sqrt{\rho_n} \u_n)$ be a sequence of solutions for a family of bounded parameters with uniformly bounded initial values verifying (\ref{Initial conditions}) with a fixed $C_{in}$.
Assume that there exists $\alpha>0$, and a smooth function $h:\R^+\times\R^3\to\R$ such that $\rho_n^\alpha$ is uniformly bounded in $L^p((0,T)\times\Omega)$ and  $h(\rho_n,\u_n)$ is uniformly bounded in $L^q((0,T)\times \Omega)$, with 
$$
\frac{1}{p}+\frac{1}{q}<1.
$$
Then, up to a subsequence, $\rho_n$ converges to a function $\rho$ strongly in $L^1$, $\sqrt{\rho_n}\u_n$ converges weakly to a function $q$ in $L^2$. We define $\u=q/\sqrt{\rho}$ whenever $\rho\neq 0$, and $\u=0$ on the vacuum where $\rho=0$. 
Then $\rho_n^\alpha h(\rho_n,\u_n)$ converges strongly in $L^1$ to $\rho^\alpha  h(\rho, \u)$. 
\end{Lemma}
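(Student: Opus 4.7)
\bigskip

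\noindent \textbf{Sketch of proof of Lemma \ref{Compactnesstool1}.}
The overall strategy is to combine (a) strong pointwise convergence of $\rho_n$ coming from Lemma \ref{compactuniforme}, (b) strong $L^1$ compactness of a (possibly renormalized) momentum furnished by Lemma \ref{lem u} via Aubin--Lions, which will yield pointwise convergence of $\u_n$ off the vacuum, and (c) Vitali's convergence theorem applied on the finite-measure set $(0,T)\times\O$, the hypothesis $1/p+1/q<1$ providing the required equi-integrability through a H\"older estimate. The main obstacle is the vacuum set $\{\rho=0\}$, where $\u_n$ need not converge pointwise, so the argument there must be phrased in terms of convergence in measure rather than a direct dominated-convergence argument.

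First I would extract, up to a subsequence, $\rho_n \to \rho$ almost everywhere in $(0,T)\times\O$ (Lemma \ref{compactuniforme}) and $\sqrt{\rho_n}\u_n \rightharpoonup q$ weakly in $L^2((0,T)\times\O)$. Next, Lemma \ref{lem u} supplies compactness of $\rho_n\u_n$ when $\varepsilon_1>0$, and of $\Phi(\rho_n)\u_n$ with $\Phi$ the smooth cut-off described there when $\varepsilon_2=0$; in both cases the bounds on the time derivative and on the spatial gradient allow Aubin--Lions to deliver strong $L^1((0,T)\times\O)$ convergence, and hence, after a further extraction, a.e.\ convergence. Since $\Phi$ (respectively the identity) is continuous and strictly positive on $(0,\infty)$ and $\rho_n\to\rho$ a.e., dividing yields $\u_n\to\u:=q/\sqrt\rho$ almost everywhere on the non-vacuum set $\{\rho>0\}$; the identification of the limit rests on $\rho_n\u_n=\sqrt{\rho_n}\cdot\sqrt{\rho_n}\u_n$ together with the strong $L^2$ convergence of $\sqrt{\rho_n}$ (itself consequence of the uniform higher integrability of $\rho_n$). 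The continuity of $h$ then gives $h(\rho_n,\u_n)\to h(\rho,\u)$ a.e.\ on $\{\rho>0\}$.

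It remains to patch the vacuum and to close the argument. On $\{\rho>0\}$ the previous step, combined with $\rho_n^{\a}\to\rho^{\a}$, yields a.e.\ convergence of $\rho_n^{\a}h(\rho_n,\u_n)$ to $\rho^{\a}h(\rho,\u)$. On $\{\rho=0\}$ I would prove convergence in measure to zero through the elementary splitting
\[
\{|\rho_n^{\a} h(\rho_n,\u_n)|>\varepsilon\}\cap\{\rho=0\} \subset \{\rho_n^{\a}>\sqrt\varepsilon/M\}\cup\{|h(\rho_n,\u_n)|>M\sqrt\varepsilon\},
\]
where the first piece has measure tending to zero because $\rho_n\to 0$ a.e.\ there, and the second is bounded by $C(M\sqrt\varepsilon)^{-q}$ by Chebyshev and the uniform $L^q$ bound; letting $M\to\infty$ after $n\to\infty$ gives convergence in measure on the vacuum, which combined with the a.e.\ convergence on $\{\rho>0\}$ provides convergence in measure on the whole domain. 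Finally, H\"older's inequality together with $1/p+1/q<1$ shows that $\rho_n^{\a}h(\rho_n,\u_n)$ is uniformly bounded in $L^s((0,T)\times\O)$ with $s>1$, hence equi-integrable on the finite-measure set $(0,T)\times\O$; Vitali's theorem then yields the announced strong convergence in $L^1((0,T)\times\O)$ to $\rho^{\a}h(\rho,\u)$.
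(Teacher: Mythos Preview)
Your proof is correct and follows essentially the same strategy as the paper's: extract a.e.\ convergence of $\rho_n$ from Lemma~\ref{compactuniforme}, use Lemma~\ref{lem u} plus Aubin--Lions to obtain a.e.\ convergence of $\u_n$ on $\{\rho>0\}$, and then split into the non-vacuum and vacuum parts. The only minor difference is in the handling of the vacuum: the paper bounds $\|\rho_n^\alpha h(\rho_n,\u_n)\mathbf{1}_{\{\rho=0\}}\|_{L^1}$ directly by H\"older as $\|h(\rho_n,\u_n)\|_{L^q}\|\rho_n^\alpha\mathbf{1}_{\{\rho=0\}}\|_{L^{p-\varepsilon}}$ and observes that the last factor tends to zero (a.e.\ convergence to $0$ plus uniform $L^p$ bound), whereas you reach the same conclusion through a convergence-in-measure argument followed by Vitali; both routes are valid and equivalent in spirit.
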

\begin{proof}
Thanks to the uniform bound on the kinetic energy $\int \rho_n |\u_n|^2$, and to Lemma \ref{compactuniforme}, up to a subsequence, $\rho_n$ converges strongly in $L^1((0,T)\times \Omega)$ to a function $\rho$, and $\sqrt{\rho_n} \u_n$ converges weakly in $L^2((0,T)\times \Omega)$ to a function $q$. 

\vskip0.3cm
We want to show that, up to a subsequence, $\u_n {\bf 1}_{\{\rho>0\}}$ converges almost every where to $u {\bf 1}_{\{\rho>0\}}$. We consider the two cases. First, if $\eps_{1,n}=\eps_1>0$, then from Lemma \ref{lem u} and the Aubin-Lions Lemma,
$\rho_n \u_n$ converges strongly in $C^0(0,T; L^1(\Omega))$ to $\sqrt{\rho} q=\rho \u$.  Up to a subsequence, both $\rho_n$ and $\rho_n \u_n$ converges almost everywhere to, respectively,  $\rho$ and $\rho \u$. For almost every $(t,x) \in \{\rho>0\}$, for $n$ big enough, $\rho_n(t,x)>0$, so $\u_n=\rho_n \u_n/\rho_n$ at this point converges $u$. If $\eps_{2,n}=0$ we use the second part of Lemma \ref{lem u} and thanks to the Aubin-Lions Lemma, $\Phi(\rho_n)\u_n$ converges strongly  in $C^0(0,T; L^1(\Omega))$ to $\Phi(\rho) \u$. We still have, up to a subsequence, both $\rho_n$ and $\Phi(\rho_n) \u_n$ converging almost everywhere to, respectively,  $\rho$ and $\phi(\rho) \u$ (we used the fact that $\Phi(r)/\sqrt{r}=0$ at $r=0$). Since $\Phi(r)\neq 0$ for $r\neq0$, for almost every $(t,x) \in \{\rho>0\}$, for $n$ big enough, $\Phi(\rho_n)(t,x)>0$, so $u_n=\Phi(\rho_n) \u_n/\Phi(\rho_n)$ at this point converges $\u$. 

\vskip0.3cm
Note that 
$$
\rho_n^\alpha h(\rho_n,\u_n) =\rho_n^\alpha h(\rho_n,\u_n) {\bf 1}_{\{\rho>0\}}+\rho_n^\alpha h(\rho_n,\u_n) {\bf 1}_{\{\rho=0\}}.
$$
The first term converges almost everywhere to $\rho^\alpha h(\rho,\u) {\bf 1}_{\{\rho>0\}}$, and therefore to $\rho^\alpha h(\rho,\u) $ in $L^1$ by the Lebesgue's theorem. The second part can be estimated as follows
$$
\|\rho_n^\alpha h(\rho_n,\u_n) {\bf 1}_{\{\rho=0\}}\|_{{L^1}}\leq \|h(\rho_n,\u_n)\|_{L^q}\|\rho_n^\alpha  {\bf 1}_{\{\rho=0\}}\|_{{L^{p-\eps}}}.
$$
But $\rho_n^\alpha {\bf 1}_{\{\rho=0\}}$ converges almost everywhere to 0, by the Lebesgue's theorem, the last term converges to 0.
\end{proof}

\bigskip

\noindent {\it Some compactness when the  parameters are fixed.}  For any positive fixed $\delta$, $r_0$, $r_1$, $r_2$ and $r$, to recover a weak solution to \eqref{last level approximation}, we only need to handle the compactness of the terms 
$$r\rho_n\nabla\left(\sqrt{K(\rho_n)}\D(\int_0^{\rho_n}\sqrt{K(s)}\,ds)\right)$$ 
and
$$\frac{\rho_n}{\mu'(\rho_n)}|\u_n|^2\u_n.$$
Indeed due to the term $r_0\rho_n|\u_n|\u_n$ and the fact that $\inf_{s\in [0,+\infty)}\mu'(s) >\varepsilon_1>0$, one obtains the compactness for all other terms in the same way as   in \cite{BDZ,MV}.

\medskip

\noindent {\it Capillarity term.} To pass to the limits in
$$r\rho_n\nabla\left(\sqrt{K(\rho_n)}\D(\int_0^{\rho_n}\sqrt{K(s)}\,ds)\right),$$
we use the identity
\begin{equation}
\begin{split}
&\rho\nabla\left(\sqrt{K(\rho_n)}\D(\int_0^{\rho_n}\sqrt{K(s)}\,ds)\right) \\
& \hskip3cm =
   4 \Bigl[2{\rm div}(\sqrt{\mu(\rho_n)} \nabla\nabla Z(\rho_n)) 
     - \Delta (\sqrt{\mu(\rho_n)} \nabla Z(\rho_n)\Bigr]\\
&\hskip4cm + \Bigl[ \nabla \bigl[(\frac{2\lambda(\rho_n)}{\sqrt{\mu(\rho_n)}}
  + k(\rho_n))\Delta Z(\rho_n)\bigr]
      - \nabla {\rm div} [ k(\rho_n)\nabla Z(\rho_n)] \Bigr]
\end{split}
\end{equation}
where $\displaystyle Z(\rho_n) = \int_0^{\rho_n} [(\mu(s))^{1/2} \mu'(s)]/s \, ds$ and
$\displaystyle k(\rho_n)  = \int_0^{\rho_n} \frac{\lambda(s)\mu'(s)}{\mu(s)^{3/2}} ds.$  
It allows us to rewrite the weak form coming for the capillarity term as follows
 \begin{equation*}
\begin{split}
&\int_0^t\int_\O \sqrt{K(\rho_n)} \Delta (\int_0^{\rho_n} \sqrt{K(s)}\, ds) {\rm div} (\rho_n \psi) \, dx\, dt 
\\&=  4 \int_0^t\int_\O \bigl(2\sqrt{\mu(\rho_n)} \nabla\nabla Z(\rho_n): \nabla \psi 
    + \sqrt{\mu(\rho_n)}\nabla Z(\rho_n)\cdot \Delta \psi\bigr) \\
 & \hskip1cm 
    + \int_0^t\int_\Omega  \bigl(\frac{2\lambda(\rho_n)}{\sqrt{\mu(\rho_n)}} 
      + k(\rho_n))\Delta Z(\rho_n) \, {\rm div} \psi
        + k(\rho_n) \nabla Z(\rho_n) . \nabla {\rm div} \psi\bigr) 
\\
&=A_1+A_2.
\end{split}
\end{equation*}
In fact, with Lemma \ref{compactuniforme} at hand, we are able to have compactness of 
$A_1$ and $A_2$ easily. Concerning $A_1$, we know that
$$\sqrt{\mu(\rho_n)} \to \sqrt{\mu(\rho)} \hbox{ in } L^p((0,T); L^q(\O)) 
\hbox{ for all } p<+\infty \hbox{ and } q<3.$$
Note that $\nabla\nabla Z(\rho_n)$ is uniformly bounded in $L^2(0,T;L^2(\O))$, we have
$\nabla Z(\rho_n)$ is uniformly bounded in $L^2(0,T;L^6(\O))$, because 
$\int_\O \nabla Z(\rho_n) = 0$ due to the periodic condition. Thus we have following weak 
convergence
$$\int_{\O}\sqrt{\mu(\rho_n)}\nabla Z(\rho_n)\cdot \Delta\psi\,dx
        \to \int_{\O}\sqrt{\mu}\nabla Z\cdot \Delta\psi\,dx,$$ 
and 
$$\int_{\O}\sqrt{\mu(\rho_n)}\nabla \nabla Z(\rho_n)\nabla\psi\,dx
        \to \int_{\O}\sqrt{\mu}\nabla \nabla Z:\nabla\psi\,dx,$$ 
thanks to Lemma \ref{compactuniforme}. We conclude that $Z=Z(\rho)$, thanks to the bound
on $Z(\rho_n)$ and the strong convergence on $\rho_n$.
 Thus using the compactness on $\rho_n$,
 the passage to the limit in $A_1$ is done. Concerning $A_2$, we just have to look at the coefficients 
$$\displaystyle k(\rho_n)= \int_0^{\rho_n} \lambda(s)\mu'(s)/\mu(s)^{3/2} \, ds, \qquad
    j(\rho_n)= {2\lambda(\rho_n)}/{\sqrt{\mu(\rho_n)}}.$$
 Recalling the assumptions on $\mu(s)$ and the relation 
$\lambda(s) = 2 (\mu'(s)s -\mu(s))$, we have
$$2(\alpha_1- 1) \mu(s) \le \lambda(s) \le 2(\alpha_2-1) \mu(s),$$
and 
$$\frac{\alpha_1}{\sqrt{\mu(s)}s} \le \frac{\mu'(s)}{\mu(s)^{3/2}}
    \le \frac{\alpha_2}{\sqrt{\mu(s)}s}.$$ 
This means that the coefficients $k(\rho_n)$ and $j(\rho_n)$ are comparable to $\sqrt{\mu(\rho_n)}$. Using the compactness of the density $\rho_n$ and the informations on $\mu(\rho_n)$ given in Corollary \ref{compactuniforme}, we conclude the compactness of $A_2$  doing as for $A_1$. 
\medskip

\noindent {\it Cubic non-linear drag term.} 
We will use Lemma \ref{Compactnesstool1} to show the compactness of  $$\frac{\rho_n}{\mu'(\rho_n)}|\u_n|^2\u_n.$$
  More precisely, we write
\begin{equation}\label{decompdrag}
\frac{\rho_n}{\mu'(\rho_n)}|\u_n|^2\u_n=\rho_n^{\frac{1}{6}}\sqrt{\frac{\rho_n}{\mu'(\rho_n)}}|\u_n|^2\rho_n^{\frac{1}{3}}|\u_n|\frac{1}{\sqrt{\mu'(\rho_n)}}
= \rho_n^{1/6} h(\rho_n,|\u_n|),
\end{equation}
By Lemma \ref{compactuniforme}, there exists $\varepsilon>0$ such that $\rho_n^{\frac{1}{6}}$ is uniformly bounded in $L^{\infty}(0,T;L^{6\gamma+\varepsilon}(\O))$
and $\rho_n\to \rho\text{ a.e.}$, so 
\begin{equation}\label{comp1}
\rho_n^{\frac{1}{6}}\to\rho^{\frac{1}{6}}\quad\text{ in } L^{6\gamma+\varepsilon}((0,T)\times \O)).
\end{equation}
Note that  $\sqrt{\frac{\rho_n}{\mu'(\rho_n)}}|\u_n|^2$ is uniformly bounded in $L^2(0,T;L^2(\O))$,
and  $\inf_{s\in [0,+\infty)} \mu'(s) \ge \varepsilon_1 >0$, $\rho_n^{\frac{1}{3}}|\u_n|\frac{1}{\sqrt{\mu'(\rho_n)}}$ is uniformly bounded in $L^3(0,T;L^3(\O))$,
 thus
\begin{equation} \label{comp2}
h(\rho_n,|\u_n|) = 
\sqrt{\frac{\rho_n}{\mu'(\rho_n)}}|\u_n|^2\rho_n^{\frac{1}{3}}|\u_n|\frac{1}{\sqrt{\mu'(\rho_n)}}
\in L^{\frac{6}{5}}(0,T;L^{\frac{6}{5}}(\O))
   \hbox{ uniformly.}
\end{equation}
By Lemma \ref{Compactnesstool1}  and \eqref{decompdrag}--\eqref{comp2}, we deduce that 
$$\int_0^t\int_{\O}\frac{\rho_n}{\mu'(\rho_n)}|\u_n|^2\u_n\,dx\,dt\to \int_0^t\int_{\O}\frac{\rho}{\mu'(\rho)}|\u|^2\u\,dx\,dt. \, \square$$

Relying on the compactness stated in this section and the compactness in \cite{MV}, we are able to follow the argument in \cite{BDZ} to  show Theorem \ref{main result 1}. 
Thanks to  term $r_0\rho_n|\u_n|\u_n$, we have $$\int_0^T\int_{\O}r_0\rho_n|\u_n|^4\,dx\,dt\leq C.$$
This gives us that $$\sqrt{\rho_n}\u_n\to\sqrt{\rho}\u\; \text{ strongly in } L^2(0,T;L^2(\O)).$$
With above compactness of this section, we are able to pass to the limits for recovering a weak solution.
 In fact, to recover  a weak solution to \eqref{last level approximation}, we have to pass to the limits as the order of $\varepsilon_4\to 0$, $n\to\infty,$ $\varepsilon_3\to0$ and $\varepsilon\to 0$ respectively.  In particular, when passing to the limit $\varepsilon_3$ tends to zero, we also need  to handle the identification of $\v$ with 
$2\nabla{s}(\rho)$. 
 Following the same argument in \cite{BDZ}, one shows that $\v$ and $2\nabla{s}(\rho)$ satisfy the same moment equation. 
 By  the regularity and compactness of solutions, we can show the uniqueness of solutions.  By the uniqueness, we have $\v=2\nabla{s}(\rho)$. This ends the proof of Theorem \ref{main result 1}.

\section{From weak solutions to renormalized solutions to the approximation}

This section is dedicated to show that a weak solution is a renormalized solution for our last level of approximation namely to show Theorem \ref{renorm}. First, we introduce a new function
$$[f(t,x)]_\varepsilon =f*\eta_{\varepsilon}(t,x),\text{ for any\ \ } t>\varepsilon,\quad\text{ and }\;[f(t,x)]_\varepsilon^x =f*\eta_{\varepsilon}(x)$$ 
where $$\eta_{\varepsilon}(t,x)=\frac{1}{\varepsilon^{d+1}}\eta(\frac{t}{\varepsilon},\frac{x}{\varepsilon}),\quad\text{ and } \eta_{\varepsilon}(x)=\frac{1}{\varepsilon^{d}}\eta(\frac{x}{\varepsilon}),$$
with $\eta$ a smooth nonnegative even function compactly supported in the space time ball of radius 1, and with integral equal to 1.
   In this section, we will rely on the following two lemmas to proceed our ideas. 
Let $\partial$ be a partial derivative in one direction (space or time) in these two lemmas.  The first one is the commutator lemma of DiPerna and Lions, see \cite{Lions}.
 \begin{Lemma}
 \label{Lions's lemma} 
 Let $f\in W^{1,p}(\R^N\times\R^{+}),\,g\in L^{q}(\R^N\times\R^{+})$ with $1\leq p,q\leq \infty$, and $\frac{1}{p}+\frac{1}{q}\leq 1$. Then, we have
 $$\|
 [\partial(fg)]_\varepsilon -\partial(f([g]_{\varepsilon}))\|_{L^{r}(\R^N\times \R^+)}\leq C\|f\|_{W^{1,p}(\R^N\times\R^{+})}\|g\|_{L^{q}(\R^N\times\R^{+})}$$
 for some $C\geq 0$ independent of $\varepsilon$, $f$ and $g$, $r$ is determined by $\frac{1}{r}=\frac{1}{p}+\frac{1}{q}.$ In addition,
  $$[\partial(fg)]_{\varepsilon}-\partial(f([g]_{\varepsilon}))\to0\;\;\text{ in }\,L^{r}(\R^N\times\R^{+})$$
 as $\varepsilon \to 0$ if $r<\infty.$
 Moreover, in the same way if 
 $f\in W^{1,p}(\R^N),\,g\in L^{q}(\R^N)$ with $1\leq p,q\leq \infty$, and $\frac{1}{p}+\frac{1}{q}\leq 1$. Then, we have
 $$\|
 [\partial(fg)]^x_\varepsilon -\partial(f([g]^x_{\varepsilon}))\|_{L^{r}(\R^N)}\leq C\|f\|_{W^{1,p}(\R^N)}\|g\|_{L^{q}(\R^N)}$$
 for some $C\geq 0$ independent of $\varepsilon$, $f$ and $g$, $r$ is determined by $\frac{1}{r}=\frac{1}{p}+\frac{1}{q}.$ In addition,
  $$[\partial(fg)]^x_{\varepsilon}-\partial(f([g]^x_{\varepsilon}))\to0\;\;\text{ in }\,L^{r}(\R^N)$$
 as $\varepsilon \to 0$ if $r<\infty.$

 \end{Lemma}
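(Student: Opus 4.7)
The plan is to follow the classical DiPerna--Lions argument: derive a uniform $L^r$ bound on the commutator by an elementary translation-in-$L^p$ estimate, and then prove convergence first for smooth $f$ and conclude for general $f \in W^{1,p}$ by density. First I would rewrite the commutator in a tractable pointwise form. Since $\partial$ commutes with the convolution $[\cdot]_\varepsilon$, and since $\int \partial\eta_\varepsilon = 0$, one has
\begin{equation*}
r_\varepsilon(z) := [\partial(fg)]_\varepsilon(z) - \partial(f[g]_\varepsilon)(z)
= \int \partial_z\eta_\varepsilon(z-y)\,(f(y)-f(z))\,g(y)\,dy - (\partial f)(z)\,[g]_\varepsilon(z),
\end{equation*}
which, after the change of variables $y = z - \varepsilon w$, becomes
\begin{equation*}
r_\varepsilon(z) = \int \partial\eta(w)\,\frac{f(z-\varepsilon w)-f(z)}{\varepsilon}\,g(z-\varepsilon w)\,dw \;-\; (\partial f)(z)\,[g]_\varepsilon(z).
\end{equation*}

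For the uniform bound I would use the elementary translation estimate $\|f(\cdot-\varepsilon w) - f(\cdot)\|_{L^p} \le \varepsilon |w|\,\|\nabla f\|_{L^p}$, which comes from $f(z-\varepsilon w) - f(z) = -\varepsilon \int_0^1 w\cdot\nabla f(z-s\varepsilon w)\,ds$ and Minkowski. Combining with Hölder's inequality on the pair $(p,q)$ producing the exponent $r$, and exchanging the $w$-integration with the $L^r$-norm via Minkowski's integral inequality, I obtain
\begin{equation*}
\|r_\varepsilon\|_{L^r}
\le \Bigl(\int |\partial\eta(w)|\,|w|\,dw\Bigr)\|\nabla f\|_{L^p}\|g\|_{L^q} + \|\partial f\|_{L^p}\|[g]_\varepsilon\|_{L^q}
\le C\,\|f\|_{W^{1,p}}\|g\|_{L^q},
\end{equation*}
using that $\|[g]_\varepsilon\|_{L^q} \le \|g\|_{L^q}$. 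The constant depends only on $\eta$.

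For the strong convergence $r_\varepsilon \to 0$ in $L^r$ when $r<\infty$, I would argue in two steps. Step one: assume $f \in C_c^\infty$. Then $\tfrac{f(z-\varepsilon w)-f(z)}{\varepsilon} \to -w\cdot\nabla f(z)$ uniformly on $\mathrm{supp}\,\partial\eta$, $g(\cdot-\varepsilon w) \to g$ in $L^q$, and $[g]_\varepsilon \to g$ in $L^q$, so the limit integrand is
\begin{equation*}
-\int \partial\eta(w)\,(w\cdot\nabla f(z))\,g(z)\,dw - (\partial f)(z)\,g(z) = 0,
\end{equation*}
where the cancellation uses $\int \partial_i\eta(w)\,w_j\,dw = -\delta_{ij}$ (integration by parts on $\int\eta=1$). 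Dominated convergence then yields $\|r_\varepsilon\|_{L^r}\to 0$. Step two: for general $f\in W^{1,p}$, approximate by $f_n\in C_c^\infty$ with $f_n\to f$ in $W^{1,p}$, split $r_\varepsilon(f,g) = r_\varepsilon(f-f_n,g) + r_\varepsilon(f_n,g)$, bound the first piece uniformly in $\varepsilon$ by the estimate above, send $\varepsilon\to 0$ at fixed $n$, then $n\to\infty$. The space-only version $[\cdot]^x_\varepsilon$ is proved by exactly the same argument applied slicewise in $t$.

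The main ``obstacle'' here is just bookkeeping — this is a well-known textbook lemma (Lions, \emph{Mathematical Topics in Fluid Mechanics}, vol.~1), so there is no conceptual difficulty. The only subtle point worth flagging is the borderline case $r=\infty$, in which $[g]_\varepsilon$ converges to $g$ only weakly-$*$ and the commutator need not tend to $0$ strongly; this is precisely why the convergence conclusion in the statement is restricted to $r<\infty$.
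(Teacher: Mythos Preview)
Your argument is correct and is exactly the classical DiPerna--Lions proof. The paper does not provide its own proof of this lemma; it simply states it and refers to \cite{Lions}, so there is nothing to compare beyond noting that your proposal reproduces the standard argument from that reference. One bookkeeping remark: your density step uses that $C_c^\infty$ is dense in $W^{1,p}$, which fails for $p=\infty$; in that borderline case one instead approximates $g$ in $L^q$ (the commutator bound is bilinear and linear in $g$) and uses Rademacher to pass to the limit directly, but this does not affect the overall structure.
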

We also need another very standard lemma as follows.
\begin{Lemma}
\label{standard lemma}
If $f\in L^p(\O\times\R^{+})$ and $g\in L^q(\O\times\R^{+})$ with $\frac{1}{p}+\frac{1}{q}=1$ and $H\in W^{1,\infty}(\R)$, then
\begin{equation*}
\begin{split}
&\int_0^T\int_{\O} [f]_\varepsilon g\,dx\,dt=\int_0^T\int_{\O}f [g]_\varepsilon \,dx\,dt,
\\&\lim_{\varepsilon\to 0 } \int_0^T\int_{\O} [f]_\varepsilon g\,dx\,dt=\int_0^T\int_{\O}f g\,dx\,dt,
\\&\partial [f]_\varepsilon =[\partial f]_\varepsilon,
\\&\lim_{\varepsilon\to 0}\|H([f]_\varepsilon)-H(f)\|_{L^s_{loc}}(\O\times\R^+)=0,\quad\text{for any }\, 1\leq s<\infty.
\end{split}
\end{equation*}
\end{Lemma}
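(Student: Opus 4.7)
The lemma gathers four standard properties of the mollification operator $[\cdot]_\varepsilon = \cdot * \eta_\varepsilon$, and the plan is to treat them in the order given, each one feeding into the next. Throughout I exploit only two features of $\eta_\varepsilon$: it is a smooth compactly supported approximate identity, and it is even in its argument.

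First, the adjoint identity $\int_0^T \int_\O [f]_\varepsilon g \, dx\, dt = \int_0^T \int_\O f [g]_\varepsilon \, dx\, dt$ is proved by expanding $[f]_\varepsilon$ as a double integral, applying Fubini, and using the evenness of $\eta$: the change of variables $\eta_\varepsilon(t-s, x-y) = \eta_\varepsilon(s-t, y-x)$ lets me swap the roles of $(t,x)$ and $(s,y)$. The integrability needed to justify Fubini is provided by the assumption $1/p + 1/q = 1$ and H\"older's inequality. The second identity then follows immediately by transferring the mollification onto $g$ via the first identity and invoking the classical strong convergence $[g]_\varepsilon \to g$ in $L^q$ (valid since $q < \infty$), via
\[
\Bigl|\int_0^T\int_\O f\bigl([g]_\varepsilon - g\bigr) \, dx\, dt \Bigr| \le \|f\|_{L^p}\, \|[g]_\varepsilon - g\|_{L^q} \longrightarrow 0.
\]

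The commutation $\partial [f]_\varepsilon = [\partial f]_\varepsilon$ is a direct computation at the level of the defining convolution: $\partial (\eta_\varepsilon * f) = (\partial \eta_\varepsilon) * f = \eta_\varepsilon * \partial f$. The first equality is differentiation under the integral sign (legitimate because $\eta_\varepsilon \in C_c^\infty$), and the second uses integration by parts in the convolution variable together with the compact support of $\eta_\varepsilon$, which moves the derivative from $\eta_\varepsilon$ onto $f$ in the sense of distributions.

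For the last item, the hypothesis $H \in W^{1,\infty}(\R)$ provides both $H \in L^\infty$ and the Lipschitz bound $|H(a) - H(b)| \le \|H'\|_{L^\infty} |a-b|$. Hence pointwise
\[
|H([f]_\varepsilon) - H(f)| \le \min\bigl(2\|H\|_{L^\infty},\, \|H'\|_{L^\infty} |[f]_\varepsilon - f|\bigr).
\]
Up to extraction, $[f]_\varepsilon \to f$ almost everywhere, so by continuity of $H$, $H([f]_\varepsilon) \to H(f)$ almost everywhere; the uniform bound $2\|H\|_{L^\infty}$ then allows dominated convergence to give $\|H([f]_\varepsilon) - H(f)\|_{L^s(K)} \to 0$ on every relatively compact $K \subset \O \times \R^+$ and every $s < \infty$. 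A standard subsequence-of-subsequence argument upgrades the convergence from the extracted sequence to the full family. No step presents a genuine obstacle; the only mildly delicate point is combining the a.e.\ limit with the $L^\infty$ domination in the last identity, since the bare hypothesis $f\in L^p$ does not by itself provide $L^s_{\mathrm{loc}}$ control for $s>p$ and one must pass through the boundedness of $H$ rather than the Lipschitz bound alone.
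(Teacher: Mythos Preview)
The paper does not provide a proof of this lemma; it is introduced as ``another very standard lemma'' and stated without justification. Your proposal correctly supplies the routine details. One minor remark: in the second item you write ``valid since $q<\infty$'', but the hypothesis $1/p+1/q=1$ allows $q=\infty$ (when $p=1$); in that case one simply uses $[f]_\varepsilon\to f$ in $L^1$ directly rather than transferring the mollifier to $g$. This is a trivial case distinction and does not affect the argument.
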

   We define a nonnegative  cut-off functions $\phi_m$ for any fixed positive $m$ as follows.
\begin{equation}
\label{cutoff function}
\phi_m(y)\begin{cases}= 0, \;\;\;\;\;\quad\quad\quad\quad\text{ if }0\leq y\leq \frac{1}{2m},
\\ =2my-1,\;\;\;\;\;\quad\text{ if } \frac{1}{2m}\leq y\leq \frac{1}{m},
\\ =1,\,\;\;\;\;\quad\quad\;\quad\quad\text{ if }  \frac{1}{m}\leq y\leq m,
\\=2-\frac{y}{m},\,\;\;\;\;\quad\quad\text{ if }  m\leq y\leq 2m, 
\\=0,\,\;\;\;\;\quad\quad\;\quad\quad\text{ if }  y\geq 2m.
\end{cases}\end{equation}
It enables to define an approximated velocity for the density bounded away from zero and bounded away from infinity. It is crucial to process our procedure, since the gradient  approximated velocity is bounded in $L^2((0,T)\times \O)$.  In particular, we introduce $\u_m=\u\phi_m(\rho)$ for any fixed $m>0$. Thus, we can show 
$\nabla\u_m$ is bounded in $L^2(0,T;L^2(\O))$ due to \eqref{cutoff function}. In fact,
\begin{equation*}
\begin{split}
\nabla\u_m&=\phi_m'(\rho)\u\otimes\nabla\rho+\phi_m(\rho)\frac{1}{\sqrt{\mu(\rho)}}\Tn
\\&=\big(\phi_m'(\rho)\frac{(\mu(\rho)\rho)^{1/4}}{(\mu'(\rho))^{\frac{3}{4}}}\big)
\big((\frac{\rho}{\mu'(\rho)})^{\frac{1}{4}}\u\big)\otimes 
  \big(\frac{\mu'(\rho)}{\rho^{\frac{1}{2}}\mu(\rho)^{\frac{1}{4}}}\nabla\rho\big)
+\phi_m(\rho)\frac{1}{\sqrt{\mu(\rho)}}\Tn.
\end{split}
\end{equation*}
Similarly to \cite{LaVa}, thanks to the cut-off function \eqref{cutoff function} and for $m$ fixed,  $\phi_m'(\rho){(\mu(\rho)\rho)^{\frac{1}{4}}}/{(\mu'(\rho))^{\frac{3}{4}}}$ and  $\phi_m(\rho)/\sqrt{\mu(\rho)}$ are bounded. Then $\nabla\u_m$ is bounded in $L^2((0,T)\times \Omega)$
using the estimates with $r>0$ and $r_2>0$, and hence for $\varphi \in W^{2,+\infty}(\R)$, we get  $\nabla\varphi'((\u_m)_j)$ is bounded in $L^2((0,T)\times \Omega)$ for $j=1,2,3$.

\smallskip

The following estimates are necessary. We state them in the lemma as follows.
\begin{Lemma}
\label{estimate of approximation}
There exists  a constant $C>0$ depending only on the fixed solution $(\sqrt{\rho},\sqrt{\rho}\u)$, and $C_m$ depending also on $m$ such that
\begin{equation*}
\begin{split}&\|\rho\|_{L^{\infty}(0,T;L^{10}(\Omega))}
+\|\rho\u\|_{L^3(0,T;L^{\frac{5}{2}}(\O))}
+ \|\rho|\u|^2\|_{L^{2}(0,T; L^{\frac{10}{7}}(\O))}
 \\& +\|\sqrt{\mu}\big(|\Sn|+r|\Sk|\big)\|_{L^{2}(0,T; L^{\frac{10}{7}}(\O))}
 + \|\frac{\lambda(\rho)}{\mu(\rho)}\|_{L^{\infty}((0,T)\times \O)}
\\& + \|\sqrt{\frac{P'(\rho_n)\rho_n}{\mu'(\rho_n)}} \nabla 
      \displaystyle \Bigl(\int_0^{\rho_n} \sqrt{\frac{P'(s)\mu'(s)}{s}}\, ds\Bigr)\|_{L^{1+}((0,T)\times\Omega)} \\
&    + \|\sqrt{\frac{P_\delta'(\rho_n)\rho_n}{\mu'(\rho_n)}} \nabla 
      \displaystyle \Bigl(\int_0^{\rho_n} \sqrt{\frac{P_\delta'(s)\mu'(s)}{s}}\, ds\Bigr)\|_{L^{1+}((0,T) 
      \times\Omega)}
     +\|r_0\u\|_{L^2((0,T)\times \Omega)}\leq C,
\end{split}
\end{equation*}
and $$\|\nabla\phi_m(\rho)\|_{L^4((0,T)\times \Omega}+\|\partial_t\phi_m(\rho)\|_{L^2((0,T\times\Omega))}\leq C_m.$$
\end{Lemma}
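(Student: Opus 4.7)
My plan is to derive each stated estimate by direct use of the a priori bounds \eqref{priori estimates}--\eqref{priori mu} from Theorem \ref{main result 1} and the compactness properties of Lemma \ref{compactuniforme}, combined with standard Sobolev and H\"older interpolations. Several bounds are essentially immediate: the $L^\infty(L^{10})$ bound on $\rho$ is read off from the $\delta\rho^{10}$ estimate in \eqref{priorie estimate2}, the $L^2$ bound on $r_0\u$ from the linear drag estimate, and the $L^2(L^{10/7})$ bound on $\sqrt{\mu}(|\mathbb{S}_\mu|+r|\mathbb{S}_r|)$ from the $L^2(L^2)$ bounds together with the embedding $L^2(\O)\hookrightarrow L^{10/7}(\O)$ on the bounded domain. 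The bound on $\lambda(\rho)/\mu(\rho)$ in $L^\infty$ follows pointwise from the BD relation $\lambda=2(\rho\mu'-\mu)$ and hypothesis \eqref{mu estimate}, which yield $\lambda/\mu\in[2(\alpha_1-1),\,2(\alpha_2-1)]$. For the two pressure-related quantities, observe that the gradient factor $\nabla\int_0^\rho\sqrt{P'(s)\mu'(s)/s}\,ds=\sqrt{P'(\rho)\mu'(\rho)/\rho}\,\nabla\rho$ is controlled in $L^\infty(L^2)$ by the $\kappa$-entropy estimate (and by the $\delta$-estimate for $P_\delta$), while $\sqrt{P'(\rho)\rho/\mu'(\rho)}$ lies in $L^{2+}((0,T)\times\O)$ by the argument already used in Lemma~\ref{compactuniforme}; H\"older's inequality then produces the $L^{1+}$ bound.

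\textbf{Cut-off bounds.} For $\nabla\phi_m(\rho)=\phi_m'(\rho)\nabla\rho$ I exploit that $\phi_m'$ is supported on $\{\rho\in[1/(2m),1/m]\cup[m,2m]\}$, where the density is bounded away from $0$ and from $\infty$. On this set, the coefficient $Z_1'(\rho)=\mu'(\rho)/(\mu(\rho)^{1/4}\rho^{1/2})$ appearing in \eqref{priori estimates}--\eqref{J inequality for sequence} is bounded above and below by constants depending only on $m$, so the $L^4$ estimate on $\nabla Z_1(\rho)$ transfers directly to an $L^4$ bound on $\nabla\rho$ restricted to the support of $\phi_m'(\rho)$. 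For $\partial_t\phi_m(\rho)$ I use the continuity equation to write $\partial_t\phi_m(\rho)=-\phi_m'(\rho)\rho\,\mathrm{div}\,\u-\phi_m'(\rho)\u\cdot\nabla\rho$: the first term is in $L^2$ because $\phi_m'(\rho)\rho$ is bounded and $\mathrm{div}\,\u$ comes from $\mathbb{T}_\mu/\sqrt{\mu(\rho)}$, controlled in $L^2(L^2)$ on the support of $\phi_m'$; the second term uses the $L^4$ control of $\u$ on $\{\rho\sim 1\}$ coming from the $r_2$-drag estimate $r_2\|(\rho/\mu'(\rho))^{1/4}\u\|^4_{L^4}\le C$ together with the $L^4$ bound on $\nabla\rho$ just established.

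\textbf{The kinetic/momentum densities and the main obstacle.} The bounds $\rho|\u|^2\in L^2(L^{10/7})$ and $\rho\u\in L^3(L^{5/2})$ require the most care and constitute the principal difficulty. I would start from the identity \eqref{Tmu} rewritten as
\[
\nabla(\mu(\rho)\u)=\sqrt{\mu(\rho)}\,\mathbb{T}_\mu+\sqrt{\rho}\,\u\otimes\sqrt{\rho}\,\nabla s(\rho),
\]
which, together with the bounds $\sqrt{\mu(\rho)}\mathbb{T}_\mu\in L^2(L^2)$ and $\sqrt{\rho}\u,\sqrt{\rho}\nabla s(\rho)\in L^\infty(L^2)$, yields $\nabla(\mu(\rho)\u)\in L^2(L^1)$. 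Combining this with the $L^\infty(L^{3/2})$ bound on $\mu(\rho)\u$ from \eqref{priori mu} and applying the Sobolev embedding $W^{1,1}\hookrightarrow L^{3/2}$ in dimension three, followed by interpolation with the $L^\infty(L^{3/2})$ control, produces mixed-norm bounds on $\mu(\rho)\u$. Using the pointwise comparison $\mu(\rho)\sim \rho^{\alpha_i}$ on $\{\rho\le 1\}$ and $\{\rho\ge 1\}$ arising from \eqref{mu estimate}, together with $\rho\in L^\infty(L^{10})$, I then trade $\mu(\rho)$ for $\rho$ and obtain the targeted $L^3(L^{5/2})$ and $L^2(L^{10/7})$ bounds. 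The delicate point is the double splitting between low and high density regions (where $\mu(\rho)$ behaves like two different powers of $\rho$) and the verification that the exponents align correctly; the parameters $2/3<\alpha_1<\alpha_2<4$ together with the extra power $\rho^{10}$ provided by $\delta>0$ are exactly what makes all these interpolations close.
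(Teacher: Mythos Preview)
Most of your plan matches the paper: the bounds on $\rho\in L^\infty(L^{10})$, $\lambda(\rho)/\mu(\rho)\in L^\infty$, $r_0\u\in L^2$, the pressure-type terms via Lemma~\ref{compactuniforme}, and the cut-off estimates on $\nabla\phi_m(\rho)$ and $\partial_t\phi_m(\rho)$ are all handled exactly as the paper does.

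There is one minor slip. For $\sqrt{\mu}(|\mathbb S_\mu|+r|\mathbb S_r|)\in L^2(L^{10/7})$, the a priori bounds give $\mathbb S_\mu,\,r\mathbb S_r\in L^2(L^2)$, \emph{not} $\sqrt{\mu}\,\mathbb S_\mu\in L^2(L^2)$, so the embedding $L^2(\O)\hookrightarrow L^{10/7}(\O)$ alone does not suffice. The paper instead observes that $\mu(\rho)\le c\rho^{\alpha_2}\le c\rho^4$ for $\rho\ge 1$, whence $\mu(\rho)\in L^\infty(L^{5/2})$ and $\sqrt{\mu(\rho)}\in L^\infty(L^5)$; then H\"older with $1/5+1/2=7/10$ gives the result.

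There is a more substantial gap in your treatment of $\rho\u\in L^3(L^{5/2})$ and $\rho|\u|^2\in L^2(L^{10/7})$. Your proposed route via $\nabla(\mu(\rho)\u)\in L^2(L^1)$ and $\mu(\rho)\u\in L^\infty(L^{3/2})$ cannot reach these targets: in three dimensions $W^{1,1}\hookrightarrow L^{3/2}$ is critical and gives no spatial gain, so interpolation between $L^\infty(L^{3/2})$ and $L^2(W^{1,1})$ yields nothing beyond $L^p(L^{3/2})$, and in particular no $L^{5/2}$ spatial integrability. Trading $\mu(\rho)$ for powers of $\rho$ cannot recover this without an independent bound on $\u$. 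The paper instead exploits the drag terms, which are strictly positive at this approximation level: writing
\[
\rho\u=\rho^{2/3}\cdot\rho^{1/3}\u,
\]
one uses $\rho^{2/3}\in L^\infty(L^{15})$ (from $\rho\in L^\infty(L^{10})$) and $\rho^{1/3}\u\in L^3(L^3)$ (from the quadratic drag estimate $r_1\|\rho^{1/3}\u\|_{L^3}^3\le C$ in \eqref{priorie estimate2}); H\"older then gives $L^3(L^{5/2})$. Similarly,
\[
\rho|\u|^2=(\rho\mu'(\rho))^{1/2}\cdot\Bigl(\frac{\rho}{\mu'(\rho)}\Bigr)^{1/2}|\u|^2,
\]
with the second factor in $L^2(L^2)$ from the cubic drag estimate $r_2\|(\rho/\mu'(\rho))^{1/4}\u\|_{L^4}^4\le C$, and the first factor bounded in $L^\infty(L^5)$ via $\rho\mu'(\rho)\le\alpha_2\mu(\rho)$. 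These simple factorizations are the intended argument; your approach avoids the drag bounds and, as written, does not close.
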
 
\begin{proof}

By \eqref{priorie estimate2}, we have   $\rho \in L^{\infty}(0,T;L^{10}(\O))$.
Now we have $\nabla\sqrt{\rho}\in L^{\infty}(0,T;L^2(\O))$ because $\mu'(s) \ge \varepsilon_1$ and 
$\mu'(\rho) \nabla \rho /\sqrt \rho \in L^\infty((0,T);L^2(\O))$. Note that 
$$\rho\u=\rho^{\frac{2}{3}}\rho^{\frac{1}{3}}\u,$$
$\rho^{\frac{2}{3}}\in L^{\infty}(0,T;L^{15}(\O))$ and $\rho^{\frac{1}{3}}\u\in L^{3}(0,T;L^3(\O))$, $\rho\u$ is bounded in $L^{3}(0,T;L^{\frac{5}{2}}(\O))$.  
\smallskip

By \eqref{priorie estimate2}, we have 
$(\frac{\rho}{\mu'(\rho)})^{1/2}|\u|^2\in L^2((0,T)\times \O)$.
Note that
$$\rho|\u|^2= (\rho\mu'(\rho))^{1/2} (\frac{\rho}{\mu'(\rho)})^{1/2}|\u|^2, $$
it is  bounded in $L^{2}(0,T;L^{\frac{10}{7}}(\O))$, where we used facts 
 that $\mu(\rho) \in L^\infty(0,T;L^{5/2}(\Omega))$ (recalling that for $\rho \ge 1$ we have $\mu(\rho)\le c\rho^4$ and 
$\rho \in L^\infty(0,T;L^{10}(\Omega))$) and  $\mu'(\rho) \rho \le \alpha_2 \mu(\rho)$.

Similarly, we get  $\sqrt{\mu}(|\Sn|+r|\Sk|) \in L^2(0,T;L^{10/7}(\Omega))$
by \eqref{priori estimates}.
 The $L^\infty((0,T)\times \O)$ bound for $\lambda(\rho)/\mu(\rho)$ may be obtained easily due to \eqref{BD relationship} and \eqref{mu estimate}.
 

\smallskip

Concerning the estimates related to the pressures, we just have to look at the proof in Lemma \ref{compactuniforme}.
Note that
 \begin{equation*}
\begin{split}
&\nabla\phi_m(\rho)=\phi_m'(\rho) \nabla\rho= \phi_m'(\rho)\frac{\rho^{1/2} \mu(\rho)^{1/4}}{\mu'(\rho)}   [\frac{\mu'(\rho)}{\rho^{1/2}\mu(\rho)^{1/4}}\nabla\rho]
\end{split}
\end{equation*}
by \eqref{J inequality for sequence}, we conclude that  $\nabla\phi_m(\rho)$ is bounded in $L^4((0,T)\times\O)$.
It suffices to recall that thanks to the cut-off function $\phi_m$, we have  $\phi_m'(\rho) \rho^{1/2} \mu(\rho)^{1/4}/\mu'(\rho)$ 
bounded in $L^{\infty}((0,T)\times \O)$. Similarly, we write 
\begin{equation*}
\begin{split}
\partial _t\phi_m(\rho)&=\phi_m'(\rho)\partial_t \rho=-\phi'_m(\rho)\Dv(\rho\u)
\\&=-\phi_m'(\rho)\frac{\rho}{\sqrt{\mu}}\mathrm{Tr} (\Tn)
 - \big(\phi_m'(\rho)\frac{(\mu(\rho)\rho)^{\frac{1}{4}}}{(\mu'(\rho))^{\frac{3}{4}}}\big)
\big(\frac{\rho^{\frac{1}{4}}}{(\mu'(\rho))^{\frac{1}{4}}}\u\big)\cdot
\big(\frac{\mu'(\rho)}{\rho^{1/2} \mu(\rho)^{1/4}} \nabla \rho \big)
,\end{split}
\end{equation*}
which provides  $\partial_t\phi_m(\rho)$ bounded in $L^2(0,T;L^2(\O))$ thanks to \eqref{priori estimates}, \eqref{priorie estimate2} and \eqref{J inequality for sequence}.
and using the cut-off function property to bound the extra quantiies in $L^\infty((0,T)\times\O)$ as previously.

\end{proof}

\begin{Lemma}
\label{Lemma of renormalized approxiamtion}
The $\kappa$-entropic weak solution constructed in Theorem \ref{main result 1} is a renormalized solution, in particular, we have 
\begin{equation}
\label{limit for m large}
\begin{split}
&
\int_0^T\int_{\O}\big(\rho\varphi(\u)\psi_t+ (\rho \varphi(\u)\otimes \u) \nabla\psi\big)\\
& -   \int_0^T\int_\O  \nabla\psi  \varphi'(\u)\big[2\bigl(\sqrt{\mu(\rho)}(\Sn+ r \, \Sk)
 + \frac{\lambda(\rho)}{2\mu(\rho)}{\rm Tr}(\sqrt{\mu(\rho)} \Sn + r \sqrt{\mu(\rho)} \Sk) {\rm Id} \big]\\
& -\int_0^T \int_\O \psi\varphi''(\u)\Tn \big[2\bigl((\Sn+ r \, \Sk)
 + \frac{\lambda(\rho)}{2\mu(\rho)}{\rm Tr}(\Sn + r  \Sk) {\rm Id} \big] \\
 & + \int_0^T \int_\O \psi\varphi'(\u)F(\rho,\u)\big)\,dx\,dt=0,
\end{split}
\end{equation}
where 
\begin{equation}\label{eq_viscous_renormalise}
\begin{split}
& \sqrt{\mu(\rho)}\varphi_i'(\u)[\Tn]_{jk}= \partial_j(\mu\varphi'_i(\u)\u_k)-\sro \u_k\varphi'_i(\u)\frac{\nabla\mu}{\sqrt{\rho}}+ \bar{R}^1_\varphi, \\
&\sqrt{\mu(\rho)} \varphi_i'(\u) [\mathbb S_r]_{jk}
      =  2 \sqrt{\mu(\rho)} \varphi_i'(\u) \partial_j \partial_k Z(\rho) 
        - 2 \partial_j (\sqrt{\mu(\rho)} \partial_k Z(\rho) \varphi_i'(\u))
         + \bar{R}^2_\varphi \\
&\frac{\lambda(\rho)}{2\mu(\rho)} \varphi_i'(\u) {\rm Tr} (\sqrt{\mu(\rho)} \mathbb T_\mu)
    = {\rm div} \bigl(\frac{\lambda(\rho)}{\mu(\rho)} \sqrt\rho \u \frac{\mu(\rho)}{\sqrt\rho}   \varphi'(\u) \bigr)   \\
 & \hskip4.4cm     - \sqrt \rho u \cdot \sqrt\rho \nabla s(\rho)\frac{\rho \mu''(\rho)}{\mu(\rho)}
        \varphi'(\u)+ \bar{R}^3_\varphi    \\
&  \frac{\lambda(\rho)}{\mu(\rho)} \varphi'(\u) {\rm Tr} (\sqrt{\mu(\rho)} \mathbb S_r) 
       = \varphi_i'(\u) \bigl(\frac{\lambda(\rho)}{\sqrt{\mu(\rho)}} + \frac{1}{2} k(\rho) \bigr) \Delta Z(\rho) \\
& \hskip4.4cm  - \frac{1}{2} {\rm div}(k(\rho) \varphi'_i(\u) \nabla Z(\rho))
        +  \bar{R}^4_\varphi 
\end{split}
\end{equation}
where 
\begin{equation}
\begin{split}
&\bar{R}^1_{\varphi}=\varphi''_i(\u)\Tn\sqrt{\mu(\rho)}\u \\
&\bar{R}^2_\varphi
         = 2 \varphi_i''(u) \mathbb T_\mu \nabla Z(\rho)\\
&\bar{R}^3_\varphi =  
- \varphi_i''(u) \mathbb T_\mu\cdot \sqrt{\mu(\rho)} \u \frac{\lambda(\rho)}{\mu(\rho)} \\
& \bar{R}^4_\varphi =
 \frac{k(\rho)}{2 \sqrt{\mu(\rho)}}  \varphi_i''(\u) \mathbb T_\mu \cdot \nabla Z(\rho)
\end{split}
\end{equation}
\end{Lemma}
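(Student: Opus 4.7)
The plan is to follow the strategy introduced in Lacroix-Violet--Vasseur \cite{LaVa}. The key obstacle is that $\nabla \u$ is not known to be in $L^2$ because $\mu(\rho)$ degenerates; we only control $\sqrt{\mu(\rho)} \mathbb T_\mu \in L^2$. To compensate I would introduce the truncated velocity $\u_m = \u\, \phi_m(\rho)$ with $\phi_m$ the cut-off defined in \eqref{cutoff function}. The computation performed just after \eqref{cutoff function}, combined with Lemma \ref{estimate of approximation}, gives $\nabla \u_m \in L^2((0,T)\times\Omega)$ and $\partial_t(\rho \u_m)$, $\operatorname{div}(\rho \u_m\otimes\u)$ in suitable negative-order spaces, so $\u_m$ is a legitimate ``renormalizable'' velocity for fixed $m$.

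Next I would mollify in time and space the momentum equation, rewrite it as an equation for $\rho \u_m$ using the continuity equation and the identity $\u_m = \phi_m(\rho)\u$, and test the resulting identity against $\varphi'(\u_m)\psi$. Since $\u_m$ has an $L^2$ gradient, the standard chain rule applies and produces
$$\partial_t(\rho\varphi(\u_m)) + \operatorname{div}(\rho\varphi(\u_m)\otimes\u) = \varphi'(\u_m)\cdot[\text{viscous}+\text{capillary}+\text{pressure}+\text{drag}] + E^\varepsilon_m,$$
where $E^\varepsilon_m$ splits into a mollification commutator controlled by Lemma \ref{Lions's lemma} and terms proportional to $\varphi''(\u_m)$ paired with $\mathbb T_\mu$, $\mathbb S_r$ and $\nabla Z(\rho)$. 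Letting $\varepsilon\to 0$ with the help of Lemma \ref{Lions's lemma} and Lemma \ref{standard lemma} kills the mollification commutator, and the surviving $\varphi''$ contributions are exactly the candidates $\bar R^1_\varphi,\dots,\bar R^4_\varphi$ displayed in \eqref{eq_viscous_renormalise}; bilinearity and Cauchy--Schwarz then yield the bound $\|\bar R^i_\varphi\|_{\mathcal M}\le C\|\varphi''\|_{L^\infty}$ with $C$ depending only on the fixed $\kappa$-entropy solution through the norms listed in Lemma \ref{estimate of approximation}.

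The last step is to pass $m\to\infty$. On the set $\{\rho>0\}$ we have $\phi_m(\rho)\to 1$ pointwise so $\u_m\to \u$ a.e. there, while on $\{\rho=0\}$ the factor $\rho$ or $\sqrt{\mu(\rho)}$ absorbs everything thanks to $\varphi(s)s\in L^\infty$ and $\mu(0)=0$. Dominated convergence, justified by the uniform-in-$m$ integrability furnished by Lemma \ref{estimate of approximation} and the estimates \eqref{priori estimates}--\eqref{J inequality for sequence}, then provides passage to the limit in the advection, viscous, capillary, pressure and drag terms of \eqref{limit for m large}, and in each of the four identities of \eqref{eq_viscous_renormalise}. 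The second identity of \eqref{eq_viscous_renormalise} is the delicate one: it is obtained by rewriting the capillary stress through \eqref{BCNV relationship} and \eqref{rel}, integrating by parts so that only $\sqrt{\mu(\rho)}\nabla Z(\rho)\in L^\infty(L^2)$ and $\nabla^2 Z(\rho)\in L^2$ (from \eqref{J inequality for sequence}) appear after distributing $\varphi'(\u_m)$.

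The main obstacle is precisely this capillary piece: one must transfer a second derivative from $Z(\rho)$ onto the test function while keeping $\varphi'(\u_m)$ inside, and the $\varphi''(\u_m)\mathbb T_\mu\nabla Z(\rho)$ term that drops out has to be identified as a bounded measure of total variation $\lesssim\|\varphi''\|_\infty$. A secondary technical point is the verification, for the advection term, that $[\operatorname{div}(\rho\u\otimes\u_m)]_\varepsilon - \operatorname{div}(\rho\u\otimes[\u_m]_\varepsilon)$ tends to zero; this works because, for $m$ fixed, the truncation makes the flux belong to spaces with integrability exponents $p,q>1$ satisfying $1/p+1/q<1$, so Lemma \ref{Lions's lemma} applies directly. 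Uniqueness of the limit in $m$ then identifies the measures $\bar R^i_\varphi$ and yields \eqref{limit for m large} together with \eqref{eq_viscous_renormalise}, completing the proof of Theorem \ref{renorm}.
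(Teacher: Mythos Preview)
Your proposal is correct and follows essentially the same approach as the paper: introduce the truncated velocity $\u_m=\phi_m(\rho)\u$ to gain an $L^2$ gradient, mollify and use the DiPerna--Lions commutator Lemma \ref{Lions's lemma} to justify the chain rule, then pass $\varepsilon\to0$ and $m\to\infty$ by dominated convergence, identifying the $\varphi''$ contributions as the measures $\bar R^i_\varphi$. The only cosmetic difference is that the paper performs the procedure in two distinct mollification passes---first testing the momentum equation with $[\psi\phi_m(\rho)]_\varepsilon$ (and the continuity equation with $[\phi'_m([\rho]_\varepsilon)\psi]_\varepsilon$) to derive a weak equation for $\rho\u_m$, and only then testing that equation with $[\psi\varphi'([\u_m]_\varepsilon)]_\varepsilon$---whereas you describe the construction of the $\rho\u_m$ equation more implicitly; the paper also handles the advection commutator by splitting the space--time mollifier into a purely spatial one (its $[\cdot]^x_\varepsilon$) before invoking Lemma \ref{Lions's lemma}, which is the concrete mechanism behind your ``integrability exponents $p,q$ with $1/p+1/q<1$'' remark.
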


\begin{proof}
We choose a function $\Bigl[\phi_m'([\rho]_\varepsilon)\psi\Bigr]_\varepsilon$ as a test function for the continuity equation with  $\psi\in C_c^{\infty}((0,T)\times\O)$. Using Lemma 
\ref{standard lemma}, we have 
 \begin{equation}
\begin{split}
\label{weak formulation for mass with varepsilon}
0&=\int_0^T\int_{\O}\big(\partial_t\Bigl[\phi_m'([\rho]_\varepsilon)\psi\Bigr]_\varepsilon \rho
   +\rho\u\cdot\nabla\Bigl[\phi_m'([\rho]_\varepsilon)\psi\Bigr]_\varepsilon\big)\,dx\,dt
\\&=-\int_0^T\int_{\O}\big(\phi_m'([\rho]_\varepsilon)\psi  \, \partial_t [\rho]_\varepsilon
    +\Dv([\rho\u]_{\varepsilon}) \phi_m'([\rho]_\varepsilon)\psi\big)\,dx\,dt
\\&=\int_0^T\int_{\O}\left(\psi_t\phi_m([\rho]_\varepsilon)
    -\psi\phi'_m([\rho]_\varepsilon)
     \bigl[\frac{\rho}{\sqrt{\mu(\rho)}}\mathrm{Tr} (\Tn)+2 \sqrt{\rho}\u\cdot\nabla\sqrt{\rho}\bigr]_\varepsilon\right)\,dx\,dt.
\end{split}
\end{equation}
Using Lemma \ref{estimate of approximation} and Lemma \ref{standard lemma}, and passing into the limit as $\varepsilon$ goes to zero,  from \eqref{weak formulation for mass with varepsilon}, we get: \begin{equation}
\begin{split}
\label{modified  continuity equation}
0&=\int_0^T\int_{\O}\big(\psi_t\phi_m(\rho)-\psi\phi'_m(\rho)[\frac{\rho}{\sqrt{\mu}}\mathrm{Tr} (\Tn)+2\sqrt{\rho}\u\cdot\nabla\sqrt{\rho}]\big)\,dx\,dt
\\&=\int_0^T\int_{\O}\big(\psi_t\phi_m(\rho)
-\psi \bigl[\phi'_m(\rho)\frac{\rho}{\sqrt{\mu}}\mathrm{Tr} (\Tn)+\u\cdot\nabla\phi_m(\rho)\bigr]\big)\,dx\,dt,
\end{split}
\end{equation}
thanks to $\psi\nabla\phi_m(\rho)\in L^4((0,T)\times \O)$, $\u\in L^2((0,T)\times \O)$, and $\psi $ compactly supported.

Similarly, we can choose $[\psi\phi_m(\rho)]_\varepsilon$ as a test function for the momentum equation. In particular, 
we have the following lemma.

\begin{Lemma}
\label{Lemma for limits-first two terms}
$$
\int_0^T\int_{\O} [\psi\phi_m(\rho)]_\varepsilon \big(\partial_t (\rho \u)  +\Dv(\rho\u\otimes\u)\big)\,dx\,dt$$
tends to 
$$-\int_0^T\int_{\O}\psi_t\rho\u_m+\nabla\psi\cdot(\rho\u\otimes\u_m
+\psi(\partial_t\phi_m(\rho)+\u\cdot\nabla\phi_m(\rho))\rho\u\,dx\,dt$$
as $\varepsilon\to 0.$
\end{Lemma}
\begin{proof}
By Lemma \ref{Lions's lemma},
we can show that  
\begin{equation*}
\begin{split}
&
\int_0^T\int_{\O} [\psi\phi_m(\rho)]_\varepsilon  \partial_t(\rho \u)\,dx\,dt\to 
-\int_0^T\int_{\O}\partial_t\psi \rho\u_m
+\psi\partial_t\phi_m(\rho) \rho \u\,dx\,dt.
\end{split}
\end{equation*}
For the second term, we have 
\begin{equation*}
\begin{split}
&\int_0^T\int_{\O} \bigl[\psi\phi_m(\rho)\bigr]_\varepsilon \Dv(\rho\u\otimes\u)\,dx\,dt
=\int_0^T\int_{\O}\psi\phi_m(\rho)\bigl[ \Dv(\rho\u\otimes\u)\bigr]_\varepsilon\,dx\,dt\\
&=\big(\int_0^T\int_{\O}\psi\phi_m(\rho) \bigl[ \Dv(\rho\u\otimes\u)\bigr]_\varepsilon\,dx\,dt
-\int_0^T\int_{\O}\psi\phi_m(\rho)\bigl[\Dv(\rho\u\otimes\u)\bigr]^x_\varepsilon\,dx\,dt\big)
\\&+\int_0^T\int_{\O}\psi\phi_m(\rho)\bigl[ \Dv(\rho\u\otimes\u)\bigr]^x_\varepsilon\,dx\,dt
\\&=R_1+R_2,
\end{split}
\end{equation*}
where $[f (t,x)]_\varepsilon =f(t,x)*\eta_{\varepsilon}(t,x)$ and $[f(t,x)]_\varepsilon^x =f*\eta_{\varepsilon}(x)$ with $\varepsilon>0$ a small enough number. 
We write $R_1$ in the following way
\begin{equation*}
\begin{split}
R_1&=\int_0^T\int_{\O}\psi\phi_m(\rho)\bigl[\Dv(\rho\u\otimes\u)\bigr]_\varepsilon\,dx\,dt
      -\int_0^T\int_{\O}\psi\phi_m(\rho)\bigl[\Dv(\rho\u\otimes\u)\bigr]_\varepsilon^x\,dx\,dt
\\&=\int_0^T\int_{\O}\psi\nabla\phi_m(\rho):\bigl[\rho\u\otimes\u\bigr]_\varepsilon\,dx\,dt
     -\int_0^T\int_{\O}\psi\nabla\phi_m(\rho):\bigl[\rho\u\otimes\u\big]^x_\varepsilon\,dx\,dt.
\end{split}
\end{equation*}
Thanks to Lemma \ref{estimate of approximation}, $\rho|\u|^2 \in L^{2}(0,T; L^{10/7}(\Omega))$ and $\psi\nabla\phi_m(\rho)\in L^4((0,T)\times \Omega)$, we conclude that $R_1\to 0$ as $\varepsilon\to0.$ Meanwhile, we can apply Lemma \ref{Lions's lemma}  to $R_2$ directly, thus 
\begin{equation*}
\begin{split}&
\int_0^T\int_{\O}\psi\phi_m(\rho)\bigl[ \Dv(\rho\u\otimes\u)\big]^x_\varepsilon \,dx\,dt
\\&=\big(\int_0^T\int_{\O}\psi\phi_m(\rho)\bigl[\Dv(\rho\u\otimes\u)\bigr]^x_\varepsilon\,dx\,dt
   -\int_0^T\int_{\O}\psi\phi_m(\rho) \Dv(\rho\u\otimes [\u]^x_\varepsilon)\,dx\,dt\big)
\\&+\int_0^T\int_{\O}\psi\phi_m(\rho) \Dv(\rho\u\otimes [\u]^x_\varepsilon)\,dx\,dt
\\&=R_{21}+R_{22}.
\end{split}
\end{equation*}
By Lemma \ref{Lions's lemma},  we have $R_{21}\to 0$ as $\varepsilon\to 0$. The term $R_{22}$ will be calculated in the following way,
\begin{equation*}
\begin{split}
&\int_0^T\int_{\O}\psi\phi_m(\rho) \Dv(\rho\u\otimes [\u]^x_\varepsilon)\,dx\,dt
\\&=\int_0^T\int_{\O}\psi\phi_m(\rho) \Dv(\rho\u) [\u]^x_\varepsilon\,dx\,dt
    +\int_0^T\int_{\O}\psi\phi_m(\rho) \rho\u\cdot \nabla [\u]^x_\varepsilon\,dx\,dt
\\&=\int_0^T\int_{\O}\psi \Dv(\rho\u)[\u_m]^x_\varepsilon\,dx\,dt+\int_0^T\int_{\O}\psi\rho\u\nabla(\phi_m(\rho) [\u]^x_\varepsilon)\,dx\,dt-
\\&\int_0^T\int_{\O}\psi [\u]_\varepsilon^x \cdot\nabla\phi_m(\rho)\rho\u\,dx\,dt
\\&=-\int_0^T\int_{\O}\nabla\psi\rho\u\otimes [\u_m]_\varepsilon^x\,dx\,dt 
     -\int_0^T\int_{\O}\psi\cdot  [\u]_\varepsilon^x \nabla\phi_m(\rho)\rho\u\,dx\,dt,
\end{split}
\end{equation*}
which tends to 
$$-\int_0^T\int_{\O}\nabla\psi\rho\u\otimes \u_m\,dx\,dt -\int_0^T\int_{\O}\psi\cdot \u \nabla\phi_m(\rho)\rho\u\,dx\,dt,$$
as $\varepsilon \to 0$.

\end{proof}
For the other terms in  the momentum equation, we can follow the same way as above method for \eqref{modified  continuity equation} to have 
\begin{equation*}
\label{modified momentum equation}
\begin{split}&\int_0^T\int_{\O}\big(\psi_t\rho\u_m+\nabla\psi\cdot(\rho\u\otimes\u_m 
- 2\phi_m(\rho) (\sqrt{\mu(\rho)}(\Sn+\Sk)
+ \frac{\lambda(\rho)}{2\mu(\rho)} {\rm Tr} (\sqrt{\mu(\rho)} \mathbb S_\mu+ r \mathbb S_r) {\rm Id}     ))
 \\
& + \int_0^T\int_\O \psi(\partial_t\phi_m(\rho)+\u\cdot\nabla\phi_m(\rho))\rho\u
\\&- \int_0^T\int_\O 2 \psi( \sqrt{\mu(\rho)}(\Sn+\Sk) + \frac{\lambda(\rho)}{2\mu(\rho)} {\rm Tr} (\sqrt{\mu(\rho)} \mathbb S_\mu
+ r \mathbb S_r)  {\rm Id}  )\nabla\phi_m(\rho)+\psi\phi_m(\rho) F(\rho,\u)\big)\,dx\,dt
\\&=0.
\end{split}
\end{equation*}
Thanks to \eqref{modified continuity equation}, we have 
\begin{equation}
\label{modified momentum equation}
\begin{split}&\int_0^T\int_{\O}\big(\psi_t\rho\u_m+\nabla\psi\cdot(\rho\u\otimes\u_m
- 2\phi_m(\rho)(\sqrt{\mu(\rho)}(\Sn+ r\Sk)
+ \frac{\lambda(\rho)}{2\mu(\rho)} {\rm Tr} (\sqrt{\mu(\rho)} (\mathbb S_\mu+ r \mathbb S_r) ) {\rm Id} )\\
& - \int_0^T\int_\O \psi \phi'_m(\rho)\frac{\rho}{\sqrt{\mu(\rho)}}\mathrm{Tr} (\Tn)\rho\u
    -\psi\phi_m(\rho) F(\rho,\u)
\\&-\int_0^T\int_\O  2 \psi(\sqrt{\mu(\rho)}(\Sn+ r\Sk)
+ \frac{\lambda(\rho)}{2\mu(\rho)} {\rm Tr} (\sqrt{\mu(\rho)} (\mathbb S_\mu+ r \mathbb S_r) )
   {\rm Id}) \nabla\phi_m(\rho)\big)\,dx\,dt=0.
\end{split}
\end{equation}

\bigskip

The goal of this subsection is to derive the formulation of renormalized solution following the idea in \cite{LaVa}. We choose the function $\bigl[\psi\varphi'([\u_m]_\varepsilon)\bigr]_\varepsilon$ as a test function in \eqref{modified momentum equation}.
As the same argument of Lemma \ref{Lemma for limits-first two terms}, we can show that 
\begin{equation*}
\begin{split}&
\int_0^T\int_{\O}\big(\partial_t\bigl[\psi\varphi'([\u_m]_\varepsilon)\bigr]_\varepsilon\, \rho\u_m
     +\nabla\bigl[\psi\varphi'([\u_m]_\varepsilon)\bigr]_\varepsilon:(\rho\u\otimes\u_m)\big)\,dx\,dt
\\&\to 
\int_0^T\int_{\O}\big(\rho\varphi(\u_m)\psi_t+\rho\u\otimes\varphi(\u_m)\nabla\psi\big)\,dx\,dt,
\end{split}
\end{equation*}
and 
\begin{equation*}
\begin{split}&\int_0^T\int_{\O}\nabla\bigl[\psi\varphi'([\u_m]_\varepsilon)\bigr]_\varepsilon
\big(-2 \phi_m(\rho)(\sqrt{\mu(\rho)}(\Sn+ r\Sk)
+ \frac{\lambda(\rho)}{2\mu(\rho)} {\rm Tr} (\sqrt{\mu(\rho)} \mathbb S_\mu+ r \mathbb S_r) ) {\rm Id} \big) \\
& +\bigl[\psi\varphi'([\u_m]_\varepsilon)\bigr]_\varepsilon  \big(-\phi'_m(\rho)\frac{\rho}{\sqrt{\mu(\rho)}}\mathrm{Tr} (\Tn)\rho\u
\\&-2(\sqrt{\mu(\rho)}(\Sn+ r\Sk)
+ \frac{\lambda(\rho)}{2\mu(\rho)} {\rm Tr} (\sqrt{\mu(\rho)} (\mathbb S_\mu+ r \mathbb S_r) {\rm Id}) )\nabla\phi_m(\rho)+\phi_m(\rho) F(\rho,\u)\big)\,dx\,dt
\\&\to \int_0^T\int_{\O}\nabla(\psi\varphi'(\u_m))\
\big(-2\phi_m(\rho)(\sqrt{\mu(\rho)}(\Sn+ r\Sk)
+ \frac{\lambda(\rho)}{2\mu(\rho)} {\rm Tr} (\sqrt{\mu(\rho)} (\mathbb S_\mu+ r \mathbb S_r) )  {\rm Id} )\big)\\
& +\psi\varphi'(\u_m)
\big(-\phi'_m(\rho)\frac{\rho}{\sqrt{\mu(\rho)}}\mathrm{Tr} (\Tn)\rho\u
\\&-2(\sqrt{\mu(\rho)}(\Sn+ r\Sk)
+ \frac{\lambda(\rho)}{2\mu(\rho)} {\rm Tr} (\sqrt{\mu(\rho)} \mathbb S_\mu+ r \mathbb S_r) )\nabla\phi_m(\rho)+\phi_m(\rho) F(\rho,\u)\big)\,dx\,dt
\end{split}
\end{equation*}
as $\varepsilon$ goes to zero.
Putting these two limits together, we have 
\begin{equation}
\label{weak formulation with m}
\begin{split}&
\int_0^T\int_{\O}\big(\rho\varphi(\u_m)\psi_t+\rho\u\otimes\varphi(\u_m)\nabla\psi\big)
\\&+\nabla\psi\varphi'(\u_m)
\big(-2 \phi_m(\rho)(\sqrt{\mu(\rho)}(\Sn+ r\Sk)
+ \frac{\lambda(\rho)}{2\mu(\rho)} {\rm Tr} (\sqrt{\mu(\rho)} \mathbb S_\mu+ r \mathbb S_r) )\big)\\
& +\psi\varphi''(\u_m)\nabla\u_m\big(-\phi_m(\rho)2(\sqrt{\mu(\rho)}(\Sn+ r\Sk)
+ \frac{\lambda(\rho)}{2\mu(\rho)} {\rm Tr} (\sqrt{\mu(\rho)} \mathbb S_\mu+ r \mathbb S_r) )\big)
\\&+\psi\varphi'(\u_m)
\big(-\phi'_m(\rho)\frac{\rho}{\sqrt{\mu(\rho)}}\mathrm{Tr} (\Tn)\rho\u
-2(\sqrt{\mu(\rho)}(\Sn+ r\Sk)
\\&+ \frac{\lambda(\rho)}{2\mu(\rho)} {\rm Tr} (\sqrt{\mu(\rho)} \mathbb S_\mu+ r \mathbb S_r) )\nabla\phi_m(\rho)+\phi_m(\rho) F(\rho,\u)\big)\,dx\,dt=0.
\end{split}
\end{equation}
Now we should pass to the limit in \eqref{weak formulation with m} as $m$ goes to infinity. To this end, we should keep the following convergences in mind:
\begin{equation}
\begin{split}
\label{basic convergence for m}
&\phi_m(\rho)\;\text{ converges to }1, \quad \text{ for almost every} (t,x)\in \R^+\times\O,\\
&\u_m\text{ converges to } \u, \quad\text{ for almost every}  (t,x)\in \R^+\times\O,\\
&|\rho\phi'_m(\rho)|\leq 2, \quad\text{ and converges to } 0 \text{ for almost every}  (t,x)\in \R^+\times\O.
\end{split}
\end{equation}
We can find that \begin{equation*}
\begin{split}
&\sqrt{\mu(\rho)}\nabla\u_m=\sqrt{\mu(\rho)}\nabla(\phi_m(\rho)\u)
=\phi_m(\rho)\sqrt{\mu(\rho)}\nabla\u+\phi'_m(\rho)\sqrt{\mu(\rho)}\u\cdot\nabla\rho
\\&=\frac{\phi_m(\rho)}{\sqrt{\mu(\rho)}}\big(\nabla(\mu(\rho)\u)-\sqrt{\rho}\u\cdot\frac{\nabla\mu(\rho)}{\sqrt{\rho}}\big)+
\frac{\sqrt{\rho}}{\mu(\rho)^{\frac{3}{4}}}\big(\frac{\sqrt{\mu(\rho)}}{\rho}\mu'(\rho)\nabla\rho\big)
\big(\frac{\rho^{\frac{1}{4}}}{(\mu'(\rho))^{\frac{1}{4}}}\u\big)\big(\phi_m'(\rho)\frac{\mu(\rho)^{\frac{3}{4}}\rho^{\frac{1}{4}}}{(\mu'(\rho))^{\frac{3}{4}}}\big)
\\&=\phi_m(\rho)\Tn
    +\frac{\sqrt{\rho}}{\mu(\rho)^{\frac{3}{4}}}\big(\frac{\sqrt{\mu(\rho)}}{\rho}\mu'(\rho)\nabla\rho\big)
\big(\frac{\rho^{\frac{1}{4}}}{(\mu'(\rho))^{\frac{1}{4}}}\u\big)\big(\phi_m'(\rho)\frac{\mu(\rho)^{\frac{3}{4}}\rho^{\frac{1}{4}}}{(\mu'(\rho))^{\frac{3}{4}}}\big)
\\&=A_{1m}+A_{2m}.
\end{split}
\end{equation*}
Note that 
$$|\phi_m'(\rho)\frac{\mu(\rho)^{\frac{3}{4}}\rho^{\frac{1}{4}}}{(\mu'(\rho))^{\frac{3}{4}}}|\leq C|\phi'_m(\rho)\rho|,$$
 thus $\phi_m'(\rho){\mu(\rho)^{\frac{3}{4}}\rho^{\frac{1}{4}}}/{(\mu(\rho)')^{\frac{3}{4}}}$
  converges to zero for almost every $(t,x).$ 
 Thus, the Dominated convergence theorem yields that  $A_{2m}$ converges to zero as $m\to\infty.$
Meanwhile, the Dominated convergence theorem also gives us $A_{1m}$ converges to $\Tn$ in $L^2_{t,x}$.
Hence, with \eqref{basic convergence for m} at hand, letting $m\to\infty$ in \eqref{weak formulation with m}, one obtains that 
\begin{equation*}
\label{limit for m large}
\begin{split}&
\int_0^T\int_{\O}\big(\rho\varphi(\u)\psi_t+\rho\u\otimes\varphi(\u)\nabla\psi\big)
- 2 \nabla\psi\varphi'(\u)\big((\sqrt{\mu(\rho)}(\Sn+ r\Sk)
\\&+ \frac{\lambda(\rho)}{2\mu(\rho)} {\rm Tr} (\sqrt{\mu(\rho)} (\mathbb S_\mu+ r \mathbb S_r) )
{\rm Id} \big)-2\psi\varphi''(\u)\Tn((\Sn+r \Sk) \\
&+ \frac{\lambda(\rho)}{2\mu(\rho)} {\rm Tr} ((\mathbb S_\mu+ r \mathbb S_r) {\rm Id})
+\psi\varphi'(\u)F(\rho,\u)\big)\,dx\,dt=0.
\end{split}
\end{equation*}
From now,  we  denote $R_{\varphi}=2\psi\varphi''(\u)\Tn((\Sn+ r\Sk)+  \frac{\lambda(\rho)}{2\mu(\rho)} {\rm Tr} ( (\mathbb S_\mu+ r \mathbb S_r) {\rm Id})$.  This ends the proof of Theorem \ref{renorm}.

\end{proof}

\section{renormalized solutions and weak solutions }
The main goal of this section is the proof of Theorem \ref{main result} that obtains the existence of renormalized solutions of the Navier-Stokes equations without the additional terms, thus the existence of weak solutions of the Navier-Stokes equations. 

\subsection{Renormalized solutions}  In this subsection, we will show the existence of renormalized solutions. 
To this end, we need the following lemma of  stability.
\begin{Lemma}
\label{Lemma of stability of renormalized solution}

For any fixed $\alpha_1<\alpha_2$ as in \eqref{mu estimate} and consider sequences $\delta_n$, $r_{0n}$, $r_{1n}$ and $r_{2n}$, 
such that $r_{i,n}\to r_{i}\geq 0$ with $i=0,1,2$ and then $\delta_n\to \delta\geq 0$.
Consider a family of $\mu_n:\R^{+}\to \R^{+}$ verifying 
\eqref{mu estimate} and \eqref{mu estimate1} for the fixed $\alpha_1$ and $\alpha_2$ such that $$\mu_n\to \mu\quad\text{ in }C^0(\R^{+}).$$
Then, if $(\rho_n,\u_n)$ verifies \eqref{priori estimates}-\eqref{priori mu}, up to a subsequence, still denoted $n$, the following convergences hold.
\\
1. The sequence $\rho_n$ convergences strongly to $\rho$ in $C^0(0,T;L^p(\O))$ for any $1\leq p<\gamma.$ \\
2. The sequence $\mu_n(\rho_n)\,  \u_n$ converges to $\mu(\rho)\u$ in $L^{\infty}(0,T;L^p(\O)$ for $p \in [1,3/2)$.
\\
3. The sequence $(\Tn)_n$ convergences to $\Tn$ weakly in $L^2(0,T;L^2(\O))$.\\
4. For every function $H\in W^{2,\infty}(\overline{\R^d})$ and $0<\alpha<{2\gamma}/{\gamma+1}$, we have that $\rho_n^{\alpha} H(\u_n)$ convergences to $\rho^{\alpha}H(\u)$  strongly in $L^p(0,T;\O)$ for $1\leq p<\frac{2\gamma}{(\gamma+1)\alpha}.$ In particular, $\sqrt{\mu(\rho_n)}H(\u_n)$ convergences to $\sqrt{\mu(\rho)}H(\u)$ strongly in $L^{\infty}(0,T;L^2(\O)).$

\end{Lemma}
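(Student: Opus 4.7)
The plan is to prove the four convergences in the stated order, using the compactness of the density as the foundation and bootstrapping to the velocity and then to the nonlinear composites. Since $\mu_n$ verifies \eqref{mu estimate}--\eqref{mu estimate1} uniformly in $n$, the estimates \eqref{priori mu} hold uniformly: $\mu_n(\rho_n)$ is bounded in $L^\infty(0,T;W^{1,1}(\O))$ with $\partial_t\mu_n(\rho_n)\in L^\infty(0,T;W^{-1,1}(\O))$. Aubin--Lions then yields, up to a subsequence, $\mu_n(\rho_n)\to\overline{\mu}$ in $\mathcal{C}([0,T];L^{3/2}(\O)\hbox{ weak})$ and strongly in $L^1((0,T)\times\O)$. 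The uniform convergence $\mu_n\to\mu$ in $C^0(\mathbb{R}^+)$ together with the strict monotonicity of $\mu$ (since $\mu'>0$ on $(0,\infty)$ and $\mu(0)=0$) allows inversion to obtain $\rho_n\to\rho$ a.e. Vitali combined with the equi-integrability from the $L^{(4\gamma/3)^+}$ bound of Lemma~\ref{compactuniforme} upgrades this to $\mathcal{C}([0,T];L^p(\O))$ for every $1\le p<\gamma$, proving (1).

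For (2), I would factor $\mu_n(\rho_n)\u_n=(\mu_n(\rho_n)/\sqrt{\rho_n})\cdot\sqrt{\rho_n}\u_n$: the first factor is uniformly bounded in $L^\infty(0,T;L^6(\O))$ via the computation yielding \eqref{AAAestimm}, and the second in $L^\infty(0,T;L^2(\O))$ by the $\kappa$-entropy, delivering the $L^\infty(L^{3/2})$ control. Time regularity comes from the equation on $\mu_n(\rho_n)$, and Aubin--Lions together with (1) and the a.e. identification $\u_n\mathbf{1}_{\{\rho>0\}}\to\u\mathbf{1}_{\{\rho>0\}}$ furnished by Lemma~\ref{Compactnesstool1} identify the limit as $\mu(\rho)\u$. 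For (3), weak compactness of $(\mathbb T_{\mu,n})_n$ in $L^2((0,T)\times\O)$ is immediate from \eqref{priori estimates}; the limit is identified by passing to the limit in the distributional relation \eqref{Tmu} using Step (2) for the gradient term and the weak-strong pair $(\sqrt{\rho_n}\u_n,\sqrt{\rho_n}\nabla s_n(\rho_n))$ for the tensor product.

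For (4), combining (1) with Lemma~\ref{Compactnesstool1} applied to $h(\rho_n,\u_n)=\sqrt{\rho_n}\u_n$ yields, up to a subsequence, $\u_n\to\u$ a.e. on $\{\rho>0\}$. Since $H$ is bounded and $\rho_n^\alpha\to 0$ a.e. on $\{\rho=0\}$ (with $\u\equiv 0$ there by convention), $\rho_n^\alpha H(\u_n)\to\rho^\alpha H(\u)$ a.e. on $(0,T)\times\O$. Interpolating the mass bound $\rho_n\in L^\infty(0,T;L^1(\O))$ with the energy bound $\rho_n\in L^\infty(0,T;L^\gamma(\O))$ gives $\rho_n\in L^\infty(0,T;L^{2\gamma/(\gamma+1)}(\O))$ uniformly; since $H\in L^\infty$, it follows that $\rho_n^\alpha H(\u_n)$ is uniformly bounded in $L^\infty(0,T;L^{2\gamma/((\gamma+1)\alpha)}(\O))$. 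Vitali then converts a.e. convergence and this equi-integrability into the strong $L^p$ convergence for $1\le p<2\gamma/((\gamma+1)\alpha)$. The special case $\sqrt{\mu(\rho_n)}H(\u_n)\to\sqrt{\mu(\rho)}H(\u)$ in $L^\infty(0,T;L^2(\O))$ follows analogously from $\mu_n(\rho_n)\in L^\infty(0,T;L^1(\O))$ and $H$ bounded, combined with the a.e. convergence already obtained.

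The main obstacle lies in Step (4): the sharp exponent $p<2\gamma/((\gamma+1)\alpha)$ requires exploiting the interpolation between mass and energy rather than pointwise bounds on $\rho_n^\alpha$ alone, and one must also ensure a.e. convergence of $\u_n$ on $\{\rho>0\}$ even though the viscosity itself varies with $n$. A secondary subtlety is that Lemma~\ref{Compactnesstool1} was stated for a fixed $\mu$; its application here must be verified uniformly along $\mu_n\to\mu$, which is granted because the bounds on $\Phi(\rho_n)\u_n$ and $\partial_t(\Phi(\rho_n)\u_n)$ in Lemma~\ref{lem u} depend only on the uniform estimates \eqref{priori estimates}--\eqref{priori mu}, preserved under the variable viscosity.
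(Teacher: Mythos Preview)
Your proof is correct and follows essentially the same route as the paper: Aubin--Lions on $\mu_n(\rho_n)$ followed by inversion through the strict monotonicity of $\mu$ for part~1, the factorization $\mu_n(\rho_n)\u_n=(\mu_n(\rho_n)/\sqrt{\rho_n})\cdot\sqrt{\rho_n}\u_n$ combined with Lemma~\ref{Compactnesstool1} for part~2, weak $L^2$ compactness for part~3, and the a.e.\ convergence of $\u_n$ on $\{\rho>0\}$ from Lemma~\ref{Compactnesstool1} together with Vitali for part~4. The paper's own proof is considerably terser (it dispatches part~4 in one sentence by citing Lemma~\ref{Compactnesstool1}) and does not attempt the identification of the weak limit $\mathbb T_\mu$ that you add in Step~3; two small remarks on your write-up: in Step~2 the phrase ``time regularity from the equation on $\mu_n(\rho_n)$'' is not what drives the compactness of $\mu_n(\rho_n)\u_n$---that product has no simple evolution equation, and you are really relying on Lemma~\ref{Compactnesstool1} (via Lemma~\ref{lem u}) as the paper does---and in Step~4 the ``interpolation'' between $L^1$ and $L^\gamma$ is unnecessary since $2\gamma/(\gamma+1)<\gamma$ already.
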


\begin{proof} 

Using \eqref{priori mu}, the Aubin-Lions lemma gives us, up to a subsequence, $$\mu_n(\rho_n)\to \tilde{\mu}\quad\text{ in }\; C^0(0,T;L^q(\O))$$ for any $q<\frac{3}{2}.$
But $$\sup|\mu_n-\mu|\to 0$$ as $n\to \infty.$ Thus, we have 
\begin{equation}
\label{almost anywhere for mu}
\mu_n(\rho_n)\to \tilde{\mu}(t,x)\quad\text{ in }\; C^0([0,T];L^q(\O)),
\end{equation}
so up to a subsequence, 
\begin{equation*}
\mu(\rho_n)\to \tilde{\mu}(t,x)\;\;\text{a. e}.
\end{equation*}
Note that $\mu$ is increasing function, so it is invertible, and $\mu^{-1}$ is continuous. This implies that $\rho_n\to \rho$ a.e. with $\mu(\rho)=\tilde{\mu}(t,x).$ Together with \eqref{almost anywhere for mu} and $\rho_n$ is uniformly bounded in $L^{\infty}(0,T;L^{\gamma}(\O))$, thus we get part 1.

\bigskip
 
Note that 
$$\nabla\frac{\mu(\rho_n)}{\sqrt{\rho_n}}=\frac{\sqrt{\rho_n}\nabla \mu(\rho_n)}{\rho_n}-\frac{\mu(\rho_n)\nabla\rho_n}{2\rho\sqrt{\rho_n}},$$
thus $$\left|\nabla\frac{\mu(\rho_n)}{\sqrt{\rho_n}}\right|\leq C\left|\sqrt{\rho_n}\right|\left|\frac{\nabla\mu(\rho_n)}{\sqrt{\rho_n}}\right|,$$
so $\nabla\frac{\mu(\rho_n)}{\sqrt{\rho_n}}$ is bounded in $L^{\infty}(0,T;L^2(\O))$, thanks to \eqref{priori estimates}. Using \eqref{priori mu},
we have 
 $\frac{\mu(\rho_n)}{\sqrt{\rho_n}}$ is bounded in $L^{\infty}(0,T;W^{1,2}(\O))$,
 thus it is uniformly bounded in $L^{\infty}(0,T;L^6(\O))$.
 
 On the other hand, $\sqrt{\rho_n}\u_n$ is uniformly bounded in $L^{\infty}(0,T;L^2(\O))$. From Lemma \ref{Compactnesstool1}, we have
 $$\mu(\rho_n)\u_n=\frac{\mu(\rho_n)}{\sqrt{\rho_n}} \sqrt{\rho_n}\u_n\to \mu(\rho)\u\;\;\text{ in }\; L^{\infty}(0,T;L^q(\O))$$ 
 for any $1\leq q<\frac{3}{2}.$
Since $(\Tn)_n$ is bounded in $L^2(0,T;L^2(\O))$, and so, up to a sequence, convergences weakly in $L^2(0,T;L^2(\O))$ to a function $\Tn$.
Using Lemma  \ref{Compactnesstool1}, this gives part~4.
\end{proof}


With Lemma \ref{Lemma of stability of renormalized solution}, we are able to recover the renormalized solutions of Navier-Stokes equations without any additional term by letting $n\to\infty$ in \eqref{limit for m large}.
We state this result in the following Lemma. In this lemma, we fix $\mu$ such that $\varepsilon_1>0$.
\begin{Lemma}
\label{Lemma of existence for ren} For any fixed $\varepsilon_1>0$,
there exists a renormalized solution $(\sqrt{\rho},\sqrt{\rho}\u)$ to the initial value problem \eqref{NS equation}-\eqref{initial data}.
\end{Lemma}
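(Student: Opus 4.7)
The plan is to iterate the limiting procedure sketched in the introduction: starting from the renormalized solutions of the approximate system \eqref{last level approximation} provided by Theorem~\ref{renorm}, we send, in this order, $r_2\to 0$, $r_1\to 0$, $r_0\to 0$, $r\to 0$, and finally $\delta\to 0$. The ordering is dictated by the absorption estimates of Section~3: $r_2$ must vanish before the capillarity $r$, so that the cubic drag is controlled by the Korteweg term, and $r_1$ must vanish before $\delta\rho^{10}$, so that the quadratic drag is controlled. The a priori estimates \eqref{priori estimates}--\eqref{priori mu} and \eqref{J inequality for sequence} are uniform in the parameter being sent to zero. Since $\varepsilon_1>0$ is fixed throughout, Case~1 of Lemma~\ref{lem u} applies and yields compactness of $\rho_n\u_n$ in $C^0(0,T;L^1(\O))$; combined with Lemma~\ref{Compactnesstool1} this produces $\u_n\to \u$ a.e.\ on $\{\rho>0\}$. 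Lemma~\ref{Lemma of stability of renormalized solution} then supplies, along a subsequence, $\rho_n\to\rho$ in $C^0(0,T;L^p(\O))$ for $p<\gamma$, $\mu(\rho_n)\u_n\to\mu(\rho)\u$ in $L^\infty(0,T;L^p(\O))$ for $p<3/2$, $\mathbb{T}_\mu^{(n)}\rightharpoonup\mathbb{T}_\mu$ weakly in $L^2(L^2)$, and $\sqrt{\mu(\rho_n)}H(\u_n)\to\sqrt{\mu(\rho)}H(\u)$ strongly in $L^\infty(0,T;L^2(\O))$ for every $H\in W^{2,\infty}$.

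The extra terms vanish using \eqref{priorie estimate2}. We write $r_{0,n}\u_n=\sqrt{r_{0,n}}\,(\sqrt{r_{0,n}}\u_n)\to 0$ in $L^2$; the quadratic drag satisfies $\|r_{1,n}\rho_n|\u_n|\u_n\|_{L^1}\le r_{1,n}T\|\sqrt{\rho_n}\u_n\|_{L^\infty(L^2)}^2\to 0$; and for the cubic drag, Hölder's inequality combined with $\rho_n/\mu'(\rho_n)\le \rho_n/\varepsilon_1$ and the uniform kinetic energy bound yields $\|r_{2,n}\rho_n\mu'(\rho_n)^{-1}|\u_n|^2\u_n\|_{L^1}\le r_{2,n}^{1/4}\,C\to 0$. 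The capillarity contributions $r_n\sqrt{\mu(\rho_n)}\mathbb{S}_r^{(n)}$ and the associated trace in \eqref{eq_quantic11} vanish because $\sqrt{r_n}\mathbb{S}_r^{(n)}$ is bounded in $L^2$ while $\sqrt{\mu(\rho_n)}$ lies in $L^\infty(0,T;L^5(\O))$ by Lemma~\ref{estimate of approximation}. The extra pressure $\delta_n\rho_n^{10}$ tends to $\delta\rho^{10}$ in $L^{4/3}$ by Lemma~\ref{compactuniforme} (hence to $0$ in the last step), and the main pressure recast as the $F(\rho,\u)$-form in \eqref{eq_F} passes to the limit via the weak convergence of Lemma~\ref{compactuniforme}.

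The renormalization structure is preserved by writing each remainder as a product of a strongly convergent factor and a weakly convergent factor. In \eqref{eq_viscous_renormaliseAAA}, the right-hand side $\partial_j(\mu(\rho_n)\rho_n\varphi'_i(\u_n)\u_{k,n})-\sqrt{\rho_n}\u_{k,n}\varphi'_i(\u_n)\sqrt{\rho_n}\partial_j s(\rho_n)$ passes to the distributional limit because $\mu(\rho_n)\u_n\to\mu(\rho)\u$ strongly in $L^\infty(L^{3/2-})$ and $\sqrt{\rho_n}\nabla s(\rho_n)=\nabla\mu(\rho_n)/\sqrt{\rho_n}$ converges weakly-$\star$ in $L^\infty(L^2)$. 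The remainder $\overline R^{1,(n)}_\varphi=\varphi''(\u_n)\,\mathbb{T}_\mu^{(n)}\,\sqrt{\mu(\rho_n)}\,\u_n$ factors as the strongly converging $\varphi''(\u_n)\sqrt{\mu(\rho_n)}\u_n$ (Lemma~\ref{Lemma of stability of renormalized solution}.4) times the weakly converging $\mathbb{T}_\mu^{(n)}$, so it converges in $\mathcal{M}((0,T)\times\O)$ to $\overline R^1_\varphi=\varphi''(\u)\,\mathbb{T}_\mu\,\sqrt{\mu(\rho)}\,\u$, with total variation controlled by $C\|\varphi''\|_{L^\infty}$. The momentum remainder $R^{(n)}_\varphi$ is treated identically, being bilinear in $\mathbb{T}_\mu^{(n)}$, $\mathbb{S}_r^{(n)}$ and $\varphi''$-type factors. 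For \eqref{eq_kortweg_renormalise}, the prefactor $r_n\to 0$ together with the $L^2$ bound $\sqrt{r_n}\nabla^2 Z(\rho_n)\in L^2$ from \eqref{J inequality for sequence} forces both sides to vanish, so the limit identity is trivial with $r=0$ and $\overline R^2_\varphi\equiv 0$.

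The main obstacle is not the existence of measure-valued limits of $\overline R^{1,(n)}_\varphi$ and $R^{(n)}_\varphi$ (which is automatic from the uniform $\mathcal{M}$-bound), but their identification as remainders consistent with the limiting renormalization identities. The strong/weak factorization just described is precisely what resolves this, and it simultaneously preserves the estimate $C\|\varphi''\|_{L^\infty}$. Once the five limits have been taken and the identities of Definition~\ref{def_renormalise_u} are obtained with all approximation parameters equal to zero, the initial trace conditions follow from the weak time continuity inherited from Lemma~\ref{lem u} and the uniform $L^\infty_t$ bounds on $\rho_n$, $\rho_n\u_n$ and $\mu(\rho_n)$. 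This yields a renormalized solution of \eqref{NS equation}--\eqref{initial data}, concluding the proof.
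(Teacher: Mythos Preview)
Your proof is essentially correct and follows the same strategy as the paper: invoke Lemma~\ref{Lemma of stability of renormalized solution} to pass to the limit in each term of the renormalized formulation, and show that the extra approximation terms vanish as the corresponding parameters tend to zero. A few remarks on differences.

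First, the paper sends $r_2\to 0$, then $r_1\to 0$, and then $r_0,\delta,r$ \emph{together}, whereas you treat all five limits sequentially. Either works, since once $r_2=r_1=0$ the remaining parameters are decoupled in the a~priori estimates; your ordering is also admissible because you keep $\delta>0$ while sending $r\to 0$, so the bound $\sqrt{\mu(\rho_n)}\in L^\infty(0,T;L^5(\O))$ (coming from $\rho_n\in L^\infty(0,T;L^{10}(\O))$) is still available when you treat the capillarity contribution.

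Second, your estimate for the quadratic drag is simpler than the paper's: you use only the uniform kinetic energy $\|\sqrt{\rho_n}\u_n\|_{L^\infty(L^2)}\le C$ to get $\|r_{1,n}\rho_n|\u_n|\u_n\|_{L^1}\le r_{1,n}\,T\,C\to 0$, while the paper writes $r_1\rho|\u|\u=r_1^{1/3}\rho^{1/3}\cdot r_1^{2/3}\rho^{2/3}|\u|^2$ and uses the cubic drag bound $r_1\|\rho^{1/3}\u\|_{L^3}^3\le C$. Your argument is perfectly valid and more direct. Similarly, your treatment of the capillarity term (splitting off $\sqrt{r_n}$ and using $\sqrt{r_n}\mathbb S_r^{(n)}$ bounded in $L^2$) is cleaner than the paper's, which rewrites the term via the BCNV identity and estimates each piece separately.

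Third, you give more detail than the paper on how the compatibility relations \eqref{eq_viscous_renormaliseAAA}--\eqref{eq_kortweg_renormalise} and the remainders $\overline R^1_\varphi$, $R_\varphi$ survive the limit, via the strong/weak factorization. The paper merely cites Lemma~\ref{Lemma of stability of renormalized solution}.4 for the strong convergence of $\sqrt{\mu(\rho_n)}\varphi'(\u_n)$ and leaves the rest implicit. One small caution: your claim that $\varphi''(\u_n)\sqrt{\mu(\rho_n)}\u_n$ converges strongly in $L^2$ via Lemma~\ref{Lemma of stability of renormalized solution}.4 formally requires $H(y)=\varphi''(y)y$ to lie in $W^{2,\infty}$, which is not guaranteed by $\varphi\in W^{2,\infty}$ alone. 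In practice one either assumes $\varphi''$ compactly supported (so $\varphi''(\u)\u$ is bounded) or invokes Lemma~\ref{Compactnesstool1} directly, which only needs $h$ smooth and the relevant products bounded in the right spaces; either route closes the gap. Minor point: the absorption estimates you refer to are in Section~2, not Section~3.
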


\begin{proof}


We can use Lemma \ref{Lemma of stability of renormalized solution} to pass to the limits for the extra terms. We will have to follow this order: let $r_2$ goes to zero, then $r_1$ tends to zero,  after that  $r_0, \delta, r$ go to zero together.

\noindent -- If $r_2= r_2(n) \to 0$, we just  write 
$$r_2\frac{\rho_n}{\mu'(\rho_n)}|\u_n|^2\u_n=r_2^{\frac{1}{4}}\big(\frac{\rho_n}{\mu'(\rho_n)}\big)^{\frac{1}{4}}\big(\frac{\rho_n}{\mu'(\rho_n)}\big)^{\frac{3}{4}}|\u_n|^2\u_n,$$
and $\mu'(\rho_n)\geq \varepsilon_1 >0,$ so $\big(\frac{\rho_n}{\mu'(\rho_n)}\big)^{\frac{1}{4}}\leq C|\rho_n|^{\frac{1}{4}}$,  thus,
$$r_2\frac{\rho_n}{\mu'(\rho_n)}|\u_n|^2\u_n\to 0 \hbox{ in } L^{\frac{4}{3}}(0,T;L^{\frac{6}{5}}(\O)).$$

\noindent -- For $r_1=r(n)\to 0$, 
$$|r_1\rho_n|\u_n|\u_n|\leq r^{\frac{1}{3}}\rho_n^{\frac{1}{3}}r^{\frac{2}{3}}\rho_n^{\frac{2}{3}}|\u_n|^2,$$
which convergences to zero in $L^{\frac{3}{2}}(0,T;L^{\frac{9}{7}}(\O))$ using the drag term control in the energy and the
information on the pressure law $P(\rho) = a \rho^\gamma$.


\noindent -- For $r_0 = r_0(n) \to 0$, it is easy to conclude that 
$$r_0 \u_{n} \to 0 \hbox{ in } L^2((0,T)\times \Omega).$$

\smallskip

\noindent --  We now consider the limit $r\to 0$ of the term 
$$r\rho_n\nabla\left(\sqrt{K(\rho_n)}\D(\int_0^{\rho_n}\sqrt{K(s)}\,ds)\right).$$
Note the following identity
\begin{equation*}
\label{BCNV relation}
\rho_n\nabla\left(\sqrt{K(\rho_n)}\D(\int_0^{\rho_n}\sqrt{K(s)}\,ds)\right)=
2 \Dv\Bigl(\mu(\rho_n)\nabla^2\bigl(2 {s}(\rho_n)\bigr)\Bigr)
   +\nabla\Bigl(\lambda(\rho_n)\D\bigl(2{s}(\rho_n)\bigr)\Bigr),
\end{equation*}
we only need to  focus on $\Dv\Bigl(\mu(\rho_n)\nabla^2\bigl(2 {s}(\rho_n)\bigr)\Bigr)$ since the same argument holds for   the other term. 
Since  \begin{equation*}
\begin{split}
r\int_{\O}\Dv\Bigl(\mu(\rho_n)&\nabla^2\bigl(2 {s}(\rho_n)\bigr)\Bigr)\psi\,dx
\\&=r\int_{\O}\frac{\rho_n}{\mu_n}\nabla Z(\rho_n) \otimes \nabla Z(\rho_n)\nabla\psi\,dx
+r\int_{\O}\mu_n\nabla{s}(\rho_n)\Delta\psi\,dx\\&=
r\int_{\O}\frac{\rho_n}{\mu_n}\nabla Z(\rho_n) \otimes \nabla Z(\rho_n)\nabla\psi\,dx+r\int_{\O}\sqrt{\mu_n}\nabla Z(\rho_n)\Delta\psi\,dx,
\end{split}
\end{equation*}
the first term can be controlled as  
\begin{equation*}
\begin{split}
&\big|r\int_{\O}\sqrt{\mu_n}\nabla Z(\rho_n)\Delta\psi\,dx\big|\leq Cr^{\frac{1}{2}}\|\sqrt{\mu(\rho_n)}\|_{L^2(0,T;L^2(\O))}\|\sqrt{r}\nabla Z(\rho_n)\|_{L^2(0,T;L^2(\O))}\to 0,
\end{split}
\end{equation*}
thanks to \eqref{J inequality for sequence} and \eqref{priori mu};
and the second term as
\begin{equation*}
\begin{split}
&\big|\int_{\O}\frac{\rho_n}{\mu_n}\nabla Z(\rho_n)\otimes \nabla Z(\rho_n)\nabla\psi\,dx\big|\leq \sqrt{r}\sqrt{r}\int_{\O}\sqrt{\mu(\rho_n)}\frac{\rho_n}{\mu(\rho_n)^{\frac{3}{2}}}|\nabla Z(\rho_n)|^2|\nabla\psi|\,dx
\\&\leq C\|\sqrt{r}\frac{\rho_n}{\mu(\rho_n)^{\frac{3}{2}}}|\nabla Z(\rho_n)|^2\|_{L^2(0,T;L^2(\O))}\|\sqrt{\mu(\rho_n)}\|_{L^2(0,T;L^2(\O))}r^{\frac{1}{2}}\to 0.
\end{split}
\end{equation*}

\smallskip

\noindent -- Concerning the quantity $\delta \rho^{10}$, 
 thanks to  $\mu'_{\varepsilon_1}(\rho)\geq \varepsilon_1>0,$  $\sqrt{\delta}|\nabla\rho^{5}|$ is uniformly bounded in $L^2(0,T;L^2(\O))$. This gives us that $\delta^{\frac{1}{30}}\rho$ is uniformly bounded in $L^{10}(0,T;L^{30}(\O)).$
Thus, we have 
\begin{equation*}\left|
\int_0^T\int_{\O}\delta\rho^{10}\nabla\psi\,dx\,dt\right| \leq C(\psi) \delta^{\frac{2}{3}}\|\delta^{\frac{1}{3}}\rho^{10}\|_{L^1(0,T;L^3(\O))}\to 0
\end{equation*}
as $\delta\to0.$

With Lemma \ref{Lemma of stability of renormalized solution} at hand, we are ready to recover the renormalized solutions to  \eqref{NS equation}-\eqref{initial data}. By part 1 and part 2 of Lemma \ref{Lemma of stability of renormalized solution},
we are able to pass to the limits on the continuity equation. 
Thanks to part 4 of Lemma \ref{Lemma of stability of renormalized solution}, 
$$\sqrt{\mu(\rho_n)}\varphi'(\u_n)\to \sqrt{\mu(\rho)}\varphi'(\u) \quad\text{ in }\;\; L^{\infty}(0,T;L^2(\O)).$$ 
 With the help of Lemma \ref{compactuniforme},  we can pass to the limit on pressure, thus
 we can recover the  renormalized solutions.

\end{proof}

\subsection{Recover weak solutions from renormalized solutions}
In this part, we can recover the weak solutions from the renormalized solutions constructed in Lemma \ref{Lemma of existence for ren}. Now we show that Lemma \ref{Lemma of existence for ren} 
is valid without the condition $\varepsilon_1>0$. For such a $\mu$, we construct a sequence $\mu_n$ converging to $\mu$ in $C^0(\R^+)$ and such that $\varepsilon_{1n}=\inf \mu_n'>0$. Lemma \ref{Lemma of stability of renormalized solution} shows that, up to a subsequence, $$\rho_n\to\rho\;\;\text{ in }\; C^0(0,T;L^p(\O))$$
and $$\rho_n\u_n\to\rho\u\;\;\text{ in } L^{\infty}(0,T;L^{\frac{p+1}{2p}}(\O))$$ for any $1\leq p<\gamma,$
where $(\rho,\sqrt{\rho}\u)$ is a renormalized solution to \eqref{NS equation}.

Now, we want to show that this renormalized solution is also a weak solution  in the sense of Definition 1.2. To this end, we introduce a non-negative smooth function $\Phi:\R\to\R$ such that it has a compact support and $\Phi(s)=1$ for any $-1\leq s\leq1.$  Let $\tilde{\Phi}(s)=\int_0^s\Phi(r)\,dr$, we define 
$$\varphi_n(y)=n\tilde{\Phi}(\frac{y_1}{n})\Phi(\frac{y_2}{n})....\Phi(\frac{y_N}{n})$$
for any $y=(y_1,y_2,....,y_N)\in \R^N$.

Note that $\varphi_n$ is bounded in $W^{2,\infty}(\R^N)$ for any fixed $n>0$, $\varphi_n(y)$ converges everywhere to $y_1$ as $n$ goes to infinity, $\varphi_n'$ is uniformly bounded in $n$ and converges everywhere to unit vector $(1,0,....0)$, and $$\|\varphi_n''\|_{L^{\infty}}\leq \frac{C}{n}\to 0$$
as $n$ goes to infinity. This allows us to control the measures in Definition \ref{def_renormalise_u} as follows
  $$
 \|R_{\vfi_n}\|_{ \mathcal{M}(\R^+\times\O)}+ \|\overline{R}^1_{\vfi_n}\|_{ \mathcal{M}(\R^+\times\O)}
 +  \|\overline{R}^2_{\vfi_n}\|_{ \mathcal{M}(\R^+\times\O)} \leq C \|\vfi''_n\|_{L^\infty(\R)}\to 0
 $$
as $n$ goes to infinity. Using this function $\varphi_n$ in the equation of Definition \ref{def_renormalise_u}, the Lebesgue's Theorem gives us the equation on $\rho\u_1$ in Definition 1.2 by passing limits as $n$ goes to infinity. In this way, we are able to get full vector equation on $\rho\u$
by permuting the directions. 
Applying the Lebesgue's dominated convergence Theorem, one obtains \eqref{Smu} by passing to limit in \eqref{eq_viscous_renormaliseAAA} with $i=1$ and the function $\varphi_n$. Thus, we have shown that the renormalized solution is also a weak solution.

 \section{acknowledgement} Didier Bresch is supported by the SingFlows project, grant ANR-18-CE40-0027 and by the project Bords, grant ANR-16-CE40-0027 of the French National Research Agency (ANR). He want to thank Mikael de la Salle (CNRS-UMPA Lyon) for his efficiency
 within the National Committee for Scientific Research - CNRS (section 41)  which allowed him to visit the university of Texas at Austin in January 2019 with precious progress on this work during this period.
 Alexis Vasseur is partially supported by the NSF grant: DMS 1614918.  Cheng Yu is partially supported by the start-up funding of University of Florida.

\end{document}